\newtheorem{proposition}{Proposition}[section]
\newtheorem{theorem}[proposition]{Theorem}
\newtheorem{lemma}[proposition]{Lemma}
\newtheorem{definition}[proposition]{Definition}
\newtheorem{corollary}[proposition]{Corollary}
\newtheorem{remark}[proposition]{Remark}
\newtheorem{lgrthm}[proposition]{Algorithm}
\numberwithin{equation}{section}
\renewenvironment{proof}{\smallskip\noindent\emph{\textbf{Proof.}}%
  \hspace{1pt}}{\hspace{-5pt}{\nobreak\quad\nobreak\hfill\nobreak%
    $\square$\vspace{2pt}\par}\smallskip\goodbreak}
\newenvironment{proofof}[1]{\smallskip\noindent{\textbf{Proof~of~#1.}}%
  \hspace{1pt}}{\hspace{-5pt}{\nobreak\quad\nobreak\hfill\nobreak%
    $\square$\vspace{2pt}\par}\smallskip\goodbreak}
\newcommand{\C}[1]{\mathbf{C^{#1}}}
\newcommand{\Cc}[1]{\mathbf{C_c^{#1}}}
\renewcommand{\L}[1]{\mathbf{L^#1}}
\newcommand{\BV}{\mathbf{BV}}
\newcommand{\modulo}[1]{{\left|#1\right|}}
\newcommand{\norma}[1]{{\left\|#1\right\|}}
\newcommand{\caratt}[1]{{\chi_{\strut#1}}}
\newcommand{\reali}{{\mathbb{R}}}
\newcommand{\R}{\mathbb R}
\newcommand{\N}{{\mathbb N}}
\newcommand{\interi}{{\mathbb{Z}}}
\renewcommand{\epsilon}{\varepsilon}
\renewcommand{\phi}{\varphi}
\renewcommand{\theta}{\vartheta}
\newcommand{\tv}{\mathinner{\rm TV}}
\newcommand{\sgnp}{\operatorname{sgn}^{+}}
\newcommand{\sgnm}{\operatorname{sgn}^{-}}
\renewcommand{\d}[1]{\mathinner{\mathrm{d}{#1}}}
\newcommand{\dt}{{\Delta t}}
\newcommand{\dx}{{\Delta x}}
\newcommand{\uh}[1]{u^{n}_{#1}}
\DeclareMathOperator{\sgn}{sgn}
\newcommand{\del}{\partial}
\newcommand{\iq}{\iiiint\limits_{\Pi_T\times\Pi_T}}
\newcommand{\id}{\iint\limits_{\Pi_T}}
\newcommand{\be}{\begin{equation}}
\newcommand{\ee}{\end{equation}}
\definecolor{ffqqqq}{rgb}{1.,0.,0.}
\definecolor{uuuuuu}{rgb}{0.26666666666666666,0.26666666666666666,0.26666666666666666}
\let\@fnsymbol\@arabic
\newcommand\appendix@section[1]{%
\refstepcounter{section}%
\orig@section*{Appendix \@Alph\c@section: #1}%
\addcontentsline{toc}{section}{Appendix \@Alph\c@section: #1}%
}
\let\orig@section\section
\g@addto@macro\appendix{\let\section\appendix@section}
\title{Well-posedness of general 1D Initial Boundary Value Problems for scalar balance laws}
\author{Elena Rossi
    \footnote{Inria Sophia Antipolis - M\'editerran\'ee, Universit\'e C\^ote d'Azur, Inria, CNRS, LJAD, 2004 route des Lucioles - BP 93, 06902 Sophia Antipolis Cedex, France. Email: \texttt{elena.rossi@inria.fr}} }
\date{}
\begin{document}
\maketitle

\begin{abstract}

  \noindent We focus on the initial boundary value problem for a
  general scalar balance law in one space dimension. Under rather
  general assumptions on the flux and source functions, we prove the
  well-posedness of this problem and the stability of its solutions
  with respect to variations in the flux and in the source terms. For
  both results, the initial and boundary data are required to be
  bounded functions with bounded total variation.  The existence of
  solutions is obtained from the convergence of a Lax--Friedrichs type
  algorithm with operator splitting. The stability result follows from
  an application of Kru\v{z}kov's doubling of variables technique,
  together with a careful treatment of the boundary terms.

  \medskip

  \noindent\textit{2010~Mathematics Subject Classification: 35L04,
    35L65, 65M12, 65N08}

  \medskip

  \noindent\textit{Keywords: Initial-boundary value problem for
    balance laws, Stability estimates, Lax--Friedrichs scheme}
\end{abstract}

\section{Introduction}

Consider the following general Initial-Boundary Value Problem (IBVP)
for a one dimensional scalar balance law on the bounded interval
$]a,b[ \subset \reali$
\begin{equation}\label{eq:original}
  \left\{
    \begin{array}{l@{\qquad}r@{\,}c@{\,}l}
      \del_t u + \d{}_x f(t,x,u) =g (t,x,u),
      & (t,x)
      &\in
      &I \times ]a,b[,
      \\
      u(0,x) = u_o(x),
      & x
      &\in
      &]a,b[,
      \\
      u(t,a) = u_a (t),
      & t
      &\in
      &I,
      \\
      u(t,b) = u_b (t),
      & t
      &\in
      &I,
    \end{array}
  \right.
\end{equation}
where $I = ]0,T[ $ for a positive $T$ and we introduce the notation
\begin{equation}
  \label{eq:Dx}
  \d{}_x f\! \left(t,x,u (t,x)\right)
  =
  \partial_x f\! \left(t,x,u (t,x)\right)
  +
  \partial_u f \!\left(t,x,u (t,x)\right) \, \partial_x u (t,x).
\end{equation}

Aim of the present work is to prove the well-posedness of~\eqref{eq:original} and the stability of its solutions with
respect to variations in the flux and in the source functions.

IBVPs for balance laws in several space dimensions were originally
studied by Bardos, le Roux and
N\'ed\'elec~\cite{BardosLerouxNedelec}. However, the existence and
uniqueness result proved in~\cite{BardosLerouxNedelec} is limited to
rather smooth initial data, namely functions of class $\C2$, while the
boundary datum is assumed to be zero. An extension of this result to
more general, although smooth, boundary data is carried out
in~\cite{ColomboRossi2015}.  Other contributions in the field are due
to~\cite{Otto1996}, see also~\cite[Chapter~2]{Malek_book}, and more
recently~\cite{Vovelle2002} and~\cite{Martin}.  In particular, in this
latter article, the author proves the well-posedness of an IBVP for a
multi-dimensional balance law with $\L\infty$ data. However, a
restrictive hypothesis on the flux and the source functions is needed,
in order to get a maximum principle on the solution.

In all the above cited references, the vanishing viscosity technique
is used to get the existence of solutions. Here, existence is obtained
by proving the convergence of a Lax--Friedrichs type numerical scheme,
together with operator splitting to account for the source term.  The
idea of the proof comes from~\cite{DFG, Vovelle2002}. It is remarkable how
the $\L\infty$ and total variation estimates on the solution obtained
in the present work (see Theorem~\ref{thm:main}) are more accurate
with respect to those presented in~\cite{ColomboRossi2015}, allowing
moreover for less regular data.

As far as it concerns the uniqueness, the Lipschitz continuous
dependence on initial and boundary data of solutions to general multiD
IBVP, proved in~\cite[Theorem~4.3]{ColomboRossi2015}, applies to the
present setting. Indeed, this result is valid in a generality wider
than that assumed to prove existence of solutions
in~\cite{ColomboRossi2015}: its proof follows directly form the
definition of solution, which requires initial and boundary data of
class $\L\infty \cap \BV$.

The investigation of stability results for IBVPs for balance laws has
begun only recently. At the present time, only partial results, namely
considering particular classes of equations, are available. For
instance, in the multi-dimensional case, \cite{multipop} presents a
stability estimate for a class of multiD linear conservation laws in a
bounded domain, with homogeneous boundary conditions and initial data
of class $\L\infty \cap \BV$.  In one space dimension, only
conservation laws with a flux function not explicitly dependent on the
space variable $x$ have been considered, see~\cite{M2AS2018,IBVP1D}.
Our result is more general: a stability estimate for 1D IBVPs for
balance laws with general flux and source functions. An adaptation of
the doubling of variables technique by Kru\v zkov, together with a
careful treatment of the boundary terms, allows to obtain the desired
result.

\smallskip

The paper is organised as follows. Section~\ref{sec:MR} presents the
assumptions needed throughout the paper, the definitions of solution
to problem~\eqref{eq:original} and the main analytic results, namely
the well-posedness of the problem and the stability
estimate. Section~\ref{sec:existence} is devoted to the introduction
and analysis of the numerical scheme. The estimates necessary to prove
its convergence, as well as the convergence result, constitute the
contribution of this section. Finally, Section~\ref{sec:stabflux}
contains the proofs of the main results.

\section{Main Results}
\label{sec:MR}

Throughout, we denote by $I$ the time interval $]0,T[$, for a positive
$T$, and we set
\begin{displaymath}
  \Sigma= [0,T] \times ]a,b[ \times \reali.
\end{displaymath}

Following~\cite{Martin, Vovelle2002}, we set
\begin{displaymath}
  \sgnp (s)
  =
  \begin{cases}
    1 & \mbox{if } s>0,
    \\
    0 & \mbox{if } s \leq 0,
  \end{cases}
  \qquad
  \sgnm (s)
  =
  \begin{cases}
    0 & \mbox{if } s \geq 0,
    \\
    -1 & \mbox{if } s < 0,
  \end{cases}
  \qquad
  \begin{array}{r@{\;}c@{\;}l}
    s^+
    & =
    & \max\{s, 0\} ,
    \\
    s^-
    & =
    & \max\{-s, 0\} .
  \end{array}
\end{displaymath}
In the rest of the paper, we will denote $\mathcal{I} (r,s)
= [\min\left\{r,s\right\}, \max\left\{r,s \right\}]$, for any $r, s
\in \R$.

We require the following assumptions
\begin{enumerate}[label={\bf(f)}]
\item \label{f} $f \in \C2 (\Sigma; \reali)$; $\partial_u f,
  \, \partial^2_{xu} f \in \L\infty (\Sigma; \reali)$.
  % $\partial_u f , \, \partial_x f, \, \partial^2_{xu} f,
  % \, \partial_{xx}^2 f
  % \in \L\infty (\Sigma; \reali)$.
  % $ \, \partial_u^2 f, \, \partial^2_{tu} f, \,
  % \partial^2_{tx} f, \,, \,
  % \in \L\infty (\Sigma; \reali)$.
\end{enumerate}

\begin{enumerate}[label={\bf(g)}]
\item \label{g} $g \in \C2 (\Sigma; \reali)$; $\partial_u g \in
  \L\infty (\Sigma;\reali)$.
  % $\partial_t F, \, \partial_u F, \, \partial_x F \in \L\infty
  % (\Sigma; \reali)$.
\end{enumerate}
\begin{enumerate}[label={\bf(D)}]
\item \label{ID} $u_o \in (\L\infty \cap \BV) (]a,b[; \reali)$ and
  $u_a, \, u_b \in (\L\infty \cap \BV) (I; \reali)$.
\end{enumerate}

\noindent We introduce the constants
\begin{align}
  \label{eq:lipconst}
  L_f (t) = \ & \norma{\partial_u f}_{\L\infty ([0,t] \times [a,b]
    \times \reali; \reali)}, & L_g (t) = \ & \norma{\partial_u
    g}_{\L\infty ([0,t] \times [a,b] \times \reali; \reali)}.
  % \\
  % L_{\partial_x f} (t)
  % & =
  % & \norma{\partial^2_{xu} f}_{\L\infty ([0,t] \times [a,b] \times
  % \reali; \reali)}
\end{align}

Concerning the definition of solution to problem~\eqref{eq:original},
we refer below to the following extension
of~\cite[Definition~1]{Vovelle2002} presented in~\cite{Martin} for the
multi dimensional case.
\begin{definition}
  \label{def:MV}
  An \emph{MV--solution} to the IBVP~\eqref{eq:original} on the
  interval $[0,T[$ is a map $u \in \L\infty([0,T[ \times ]a,b[; \R)$
  such that, for all $\phi \in \Cc1 (]-\infty, T[ \times \reali;
  \R^+)$ and $k \in \R$,
  \begin{align}
    \nonumber
    & \int_0^{T} \int_{a}^{b} \left\{ \left(u (t,x)
        -k\right)^\pm \partial_t \phi (t,x) + \sgn^\pm\left(u (t,x) -
        k\right) \left[ f\left(t, x, u (t,x)\right) - f\left(t, x,
          k\right) \right] \, \partial_x \phi (t,x) \right.
    \\
    \nonumber
    & \qquad\qquad \left.  + \sgn^\pm\left(u (t,x) -k\right)
      \left(g \left(t,x,u (t,x)\right) - \partial_x f \left(t, x,
          k\right) \right) \phi (t,x) \right\} \d{x} \d{t}
    \\
    \label{eq:MV}
    & + \int_{a}^{b} \left(u_o(x)-k\right)^\pm \phi(0,x) \d{x} \\
    \nonumber & + L_f (T) \left( \int_0^{T} \left(u_a(t) - k
      \right)^\pm \phi(t,a) \d{t} + \int_0^{T} \left( u_b(t) - k
      \right)^\pm \phi(t,b) \d{t}\right) \geq 0,
  \end{align}
  where $L_f (T)$ is as in~\eqref{eq:lipconst}.
\end{definition}
We introduce a second definition of solution to
problem~\eqref{eq:original}.  This is an adaptation of the definition
of solution presented in~\cite{BardosLerouxNedelec} to the one
dimensional case, where the domain is an open bounded interval.
\begin{definition}
  \label{def:solBLN} A BLN--solution to the IBVP~\eqref{eq:original}
  on the interval $[0,T[$ is a map $u\in (\L\infty \cap \BV) ([0,T[
  \times]a,b[; \R)$ such that, for all
  $\phi\in\Cc1(]-\infty,T[\times\reali;\R^+)$ and $k\in\R$,
  \begin{align}
    \nonumber
    & \int_0^{T} \int_{a}^{b} \left\{ \modulo{u (t,x)
      -k} \partial_t \phi (t,x) + \sgn \left(u (t,x) - k\right)
      \left[ f\left(t, x, u (t,x)\right) - f\left(t, x, k\right)
      \right] \, \partial_x \phi (t,x) \right.
    \\
    \nonumber
    & \qquad\qquad \left.  + \sgn \left(u (t,x) -k\right)
      \left(g \left(t,x,u (t,x)\right) - \partial_x f \left(t, x,
      k\right) \right) \phi (t,x) \right\} \d{x} \d{t}
    \\\label{eq:BLN}
    & + \int_a^b \modulo{u_o (x) - k} \, \phi (0,x)
      \, \d{x}
    \\
    \nonumber
    & + \int_0^{T} \sgn \left( u_a(t) - k \right) \left[ f
      \left(t, a, u (t, a^+)\right) - f\left(t, a, k\right) \right]\,
      \phi(t,a) \d{t}
    \\
    \nonumber
    & - \int_0^{T} \sgn \left( u_b(t) - k \right) \left[
      f\left(t, b, u (t,b^-)\right) - f \left(t, b, k\right) \right]
      \,\phi(t,b) \d{t} \geq 0.
  \end{align}
\end{definition}

We remark that, for functions in $(\L\infty \cap \BV) ([0,T[\times
]a,b[; \reali)$, Definition~\ref{def:MV} and
Definition~\ref{def:solBLN} are equivalent, see~\cite{definizioni} for
further details.

\begin{remark}\label{rem:blnbc}{\rm{ In Definition~\ref{def:solBLN},
      to ensure that the traces of $u$ at the boundary $u(t,a^+)$ and
      $u(t,b^+)$ are well defined, we need the solution to be of
      bounded total variation. Moreover, we recall the well-known
      BLN--boundary conditions (\cite[Lemma~5.6]{definizioni}),
      linking the boundary data to the traces of the solution at the
      boundary: \begin{itemize}
    \item at $x=a$: for all $k \in \reali$, for a.e.~$t\in \,]0,T[$
      \begin{displaymath}
        \left(\sgn (u(t, a^+) - k) - \sgn (u_a(t) - k)\right)
        \left(f (t, a, u(t, a^+)) - f (t, a, k)\right) \leq 0,
      \end{displaymath}
      or, equivalently, for all
      $k \in \mathcal{I} (u (t,a^+), u_a (t))$ and a.e.~$t\in \,]0,T[$
      \begin{displaymath}
        \sgn (u(t, a^+) - u_a(t))
        \left(f (t, a, u(t, a^+)) - f (t, a, k)\right) \leq 0;
      \end{displaymath}
    \item at $x=b$: for all $k \in \reali$, for a.e.~$t\in\,]0,T[$
      \begin{displaymath}
        \left(\sgn (u(t, b^-) - k) - \sgn (u_b(t) - k)\right)
        \left(f (t, b, u(t, b^-)) - f (t, b, k)\right) \geq 0,
      \end{displaymath}
      or, equivalently, for all $k \in \mathcal{I} (u (t,b^-), u_b (t))$
      and a.e.~$t\in \,]0,T[$
      \begin{displaymath}
        \sgn (u(t, b^-) - u_b(t))
        \left(f (t, b, u(t, b^-)) - f (t, b, k)\right) \geq 0.
      \end{displaymath}
    \end{itemize}
  }}
\end{remark}

The following Theorem contains the existence and uniqueness result, as
well as some \emph{a priori} estimates on the solution to the
IBVP~\eqref{eq:original}.
\begin{theorem}
  \label{thm:main}
  Let~\ref{f}, \ref{g} and~\ref{ID} hold. Then, for all
  $T>0$, the IBVP~\eqref{eq:original} has a unique MV--solution
  $u \in (\L\infty \cap \BV) ([0,T[ \times
  ]a,b[;\reali)$. Moreover, the following estimates hold: for any
  $t \in [0,T[$ and $\tau \in \, ]0,t[$,
  \begin{align}
    \label{eq:mainLinf}
    & \norma{u (t)}_{\L\infty (]a,b[)} \leq \ \left(
      \max\left\{\norma{u_o}_{\L\infty (]a,b[)}, \,
      \norma{u_a}_{\L\infty ([0,t])} , \,\norma{u_b}_{\L\infty
      ([0,t])}\right\} + \mathcal{C}_1 (t) \, t \right)
      e^{\mathcal{C}_2 (t) \, t},
    \\
    \label{eq:mainTV}
    & \tv (u (t)) \leq \ e^{t \, \mathcal{C}_2 (t)} \left(\tv (u_o) +
      \tv (u_a; [0,t]) + \tv (u_b;[0,t]) + \mathcal{K}_2 (t) \,
      t\right),
    \\
    \label{eq:mainLipTime}
    & \norma{u (t) - u (t-\tau)}_{\L1 (]a,b[)} \leq \ \tau \,
      \left(\mathcal{C}_t (t) +\norma{g}_{\L\infty([0,t] \times [a,b]
      \times [-\mathcal{U}(t),\, \mathcal{U}(t)]} \right),
  \end{align}
  where $\mathcal{C}_1 (t), \, \mathcal{C}_2
  (t)$ are as in~\eqref{eq:c1}--\eqref{eq:c2}, $\mathcal{K}_2
  (t)$ is as in~\eqref{eq:K2} and $\mathcal{C}_t
  (t)$ is as in~\eqref{eq:Ct}, with
  $\alpha=\norma{\partial_u f}_{\L\infty
    ([0,t]\times[a,b]\times[-\mathcal{U} (t), \, \mathcal{U}
    (t)])}$, $\mathcal{U} (t)$ being as in~\eqref{eq:Ut}.
\end{theorem}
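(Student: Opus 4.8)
The plan is to establish existence together with the three a priori estimates through a single approximation argument, and then to deduce uniqueness from a stability result already available in the literature. Concretely, I would build a family of approximate solutions $u^{\dx}$ by a Lax--Friedrichs type scheme combined with operator splitting: on each time step, a conservative finite-difference step accounts for the convective part $\del_t u + \d{}_x f = 0$, followed by an explicit Euler step integrating the ordinary differential equation $\del_t u = g$ to incorporate the source. The boundary data $u_a$ and $u_b$ enter through ghost cells at the two endpoints $a$ and $b$.

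The core of the existence proof is to derive bounds on $u^{\dx}$ that are uniform in the mesh size. First I would prove a discrete maximum principle: under a suitable CFL condition each convective step is monotone, hence does not increase the sup-norm, while the source step contributes a factor controlled by $L_g$ and a growth term in $\norma{g}_{\L\infty}$; iterating over the time steps and passing to the limit yields~\eqref{eq:mainLinf}, the exponential dependence reflecting the Gronwall-type accumulation of the source. Next I would establish a uniform bound on the total variation, estimating the increment of $\tv(u^{\dx})$ across one convective step (using monotonicity and the Lipschitz regularity of $f$ in $x$ and $u$ from~\ref{f}) and across one source step (using $\partial_u g \in \L\infty$ from~\ref{g}); the boundary contributions are absorbed into $\tv(u_a)$ and $\tv(u_b)$, giving~\eqref{eq:mainTV}. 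Lipschitz continuity in time in $\L1$, namely~\eqref{eq:mainLipTime}, then follows from the TV bound together with the explicit form of the scheme, since the $\L1$ displacement over one step is controlled by $\dt$ times the spatial variation plus the source. With these three uniform bounds in hand, Helly's compactness theorem provides a subsequence of $u^{\dx}$ converging in $\Lloc1$ to some $u \in (\L\infty \cap \BV)([0,T[ \times ]a,b[; \reali)$, and the estimates pass to the limit.

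It then remains to show that the limit $u$ is an MV--solution in the sense of Definition~\ref{def:MV}. For this I would derive, for the scheme, a discrete counterpart of the entropy inequality~\eqref{eq:MV}, testing the monotone convective step against $(u^{\dx}-k)^\pm$ and summing by parts. The interior terms converge to the volume integrals in~\eqref{eq:MV}; the term involving $\partial_x f(t,x,k)$ arises from expanding $\d{}_x f$ through~\eqref{eq:Dx}; and the initial datum yields $\int_a^b (u_o-k)^\pm \, \phi(0,x)\,\d{x}$. The delicate part is the boundary: the discrete flux at the first and last interfaces must be controlled so that, in the limit, the boundary contributions are bounded above by $L_f(T)\int_0^T (u_a-k)^\pm \, \phi(t,a)\,\d{t}$ and the analogous term at $b$, exactly matching the last line of~\eqref{eq:MV}. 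This is where the factor $L_f(T)$ from~\eqref{eq:lipconst} enters, and correctly handling these one-sided boundary estimates is the main obstacle of the argument.

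Finally, uniqueness. Since the constructed solution lies in $(\L\infty \cap \BV)([0,T[ \times ]a,b[; \reali)$, Definition~\ref{def:MV} and Definition~\ref{def:solBLN} coincide by the equivalence recalled after Definition~\ref{def:solBLN}. The Lipschitz continuous dependence on the initial and boundary data established in~\cite[Theorem~4.3]{ColomboRossi2015}, which applies in the present generality because it rests only on the definition of solution with $\L\infty \cap \BV$ data, then yields uniqueness of the MV--solution, completing the proof.
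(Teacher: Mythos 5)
Your proposal follows the same architecture as the paper's proof: a Lax--Friedrichs scheme with operator splitting and ghost cells (Algorithm~\ref{alg:1}); uniform $\L\infty$, $\BV$ and time--Lipschitz bounds (Lemma~\ref{lem:linf}, Proposition~\ref{prop:BV}, Corollary~\ref{cor:BVxt}); Helly compactness; discrete entropy inequalities for the semi-entropies $(u-k)^\pm$ passed to the limit with a one-sided boundary analysis producing the $L_f(T)$ terms of~\eqref{eq:MV} (Lemma~\ref{lem:Eineq}, Lemma~\ref{lem:entropyLimit}); and uniqueness from the equivalence of Definition~\ref{def:MV} and Definition~\ref{def:solBLN} for $\L\infty\cap\BV$ functions together with~\cite[Theorem~4.3]{ColomboRossi2015} (Proposition~\ref{prop:lipdata}). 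All of that matches.

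There is, however, one step that fails as stated: the claim that ``each convective step is monotone, hence does not increase the sup-norm'', with the exponential in~\eqref{eq:mainLinf} attributed solely to Gronwall accumulation of the source. This is false when $f$ depends explicitly on $x$. Monotonicity yields a convex-combination structure only up to a residual: as in~\eqref{eq:1}, $u_j^{n+1/2} = (1-\beta_j^n-\gamma_j^n)\,\uh{j} + \beta_j^n\,\uh{j-1} + \gamma_j^n\,\uh{j+1} - \lambda\bigl(F^n_{j+1/2}(\uh{j},\uh{j}) - F^n_{j-1/2}(\uh{j},\uh{j})\bigr)$, and the last term equals $-\lambda\bigl(f(t^n,x_{j+1/2},\uh{j}) - f(t^n,x_{j-1/2},\uh{j})\bigr)$, which is of size $\dt\bigl(\norma{\partial_x f(\cdot,\cdot,0)}_{\L\infty} + \modulo{\uh{j}}\,\norma{\partial^2_{xu} f}_{\L\infty}\bigr)$. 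These $\mathcal{O}(\dt)$ contributions accumulate over roughly $t/\dt$ steps into precisely the terms $\mathcal{C}_1(t)\,t$ and the factor $e^{\mathcal{C}_2(t)\,t}$ in~\eqref{eq:mainLinf}, where $\mathcal{C}_2$ in~\eqref{eq:c2} contains $\norma{\partial^2_{xu}f}$ and not only $\norma{\partial_u g}$. A concrete counterexample to your claim appears in the paper's remark after the theorem: take $f(t,x,u)=-x$, $g=0$ and zero data; the solution is $u(t,x)=t$, whereas your argument would give $\norma{u(t)}_{\L\infty}=0$. The same mechanism touches your $\BV$ step: beyond ``Lipschitz regularity of $f$ in $x$ and $u$'', Proposition~\ref{prop:BV} requires the second-derivative bounds on $\partial^2_{xx}f$, $\partial^2_{xu}f$ (and $\partial_x g$), which is why~\eqref{eq:mainTV} carries the inhomogeneity term $\mathcal{K}_2(t)\,t$ from~\eqref{eq:K2} in addition to the data variations. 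The repair is local and is exactly Lemma~\ref{lem:linf}: expand the flux difference in $x$, bound it via~\ref{f}, and close with a discrete Gronwall iteration; with that correction the rest of your outline goes through as in the paper.
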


\noindent The proof of Theorem~\ref{thm:main} is postponed to
Section~\ref{sec:stabflux}.

\begin{remark} {\rm{
    The $\L\infty$ bound~\eqref{eq:mainLinf} provided in Theorem~\ref{thm:main} is optimal.
    \begin{itemize}
    \item In the case $f=f (u)$ and $g=0$, we get
      $\mathcal{C}_1(t)= 0$ and $\mathcal{C}_2(t)=0$,
      so~\eqref{eq:mainLinf} reduces to the well-known maximum
      principle, compare with~\cite[Chapter~2,
      Remark~7.33]{Malek_book}.
    \item The functions $\mathcal{C}_1 (t)$ and $\mathcal{C}_2 (t)$
      are clearly strictly related to the source term and to the space
      dependent flux. Consider, for instance,
      problem~\eqref{eq:original} with $u_o = 0$ and $u_b = 0$,
      $f(t, x, u) = - x$ and $g=0$. The solution is $u (t,x)=t$, and
      from~\eqref{eq:c1} we obtain $\mathcal{C}_1 (t)=1$, so
      that~\eqref{eq:mainLinf} now reads
      $\norma{u (t)}_{\L\infty (]a,b[)} \leq t $.
    \item Compare~\eqref{eq:mainLinf} with the $\L\infty$ estimate on
      the solution presented
      in~\cite[Formula~(2.5)]{ColomboRossi2015}. At a glance, one
      could notice that the present estimate is more
      accurate. Moreover, here the boundary data should be of class
      $\L\infty\cap \BV$, thus they are not required to be as regular
      as in~\cite[Theorem~2.7]{ColomboRossi2015}.

      Consider, for instance, the following example:
      \begin{align*}
        f(u)= \ & u(1-u), & g(u) = \ & 0, & u_b(t) = \ & 0, & u_o(x)=
                                                              \ & 0,
      \end{align*}
      and, as boundary datum in $x=a$, an increasing smooth function
      (at least of class $\C3$, as required
      by~\cite[Theorem~2.7]{ColomboRossi2015}) such that
      \begin{align*}
        u_a(0)= \ & 0, & u_a (t) = \ & 0.4 \mbox{ for } t > 0.1.
      \end{align*}
      Now compare the two $\L\infty$ estimates at a time $t > 0.1$.
      As already observed in the first point of this remark,
      $\mathcal{C}_1(t)=\mathcal{C}_2(t)=0$, and the
      estimate~\eqref{eq:mainLinf} reads
      \begin{displaymath}
        \norma{u(t)}_{\L\infty} \leq \norma{u_a}_{\L\infty([0,t])} = 0.4.
      \end{displaymath}
      On the other hand, with the notation of~\cite{ColomboRossi2015},
      $c_1=1$, $c_2=0$,
      $c_3 = \norma{\partial_{uu}^2 f}_{\L\infty}=2$, so that the
      estimate~\cite[Formula~(2.5)]{ColomboRossi2015} reads
      \begin{displaymath}
        \norma{u(t)}_{\L\infty} \leq
        \left(t \, \norma{u_a}_{\L\infty}  + \norma{\partial_t u_a}_{\L\infty}
          +2  \,  \norma{u_a}_{\C2}\right) e^{t(1+2\norma{u_a}_{\C2})} + \norma{u_a}_{\L\infty},
      \end{displaymath}
      where all the norms on the right hand side of the inequality are
      evaluated on $[0,t]$.
    \end{itemize}
  }}
\end{remark}

The following Theorem presents a stability estimate with respect to
the flux and the source functions. A particular case of the
IBVP~\eqref{eq:original} is considered, for instance,
in~\cite{IBVP1D}, where a flux function independent of the space
variable is taken into account. There, a stability estimate with
respect to the flux function is provided.
\begin{theorem}
  \label{thm:stab}
  Let $f_1, \, f_2$ satisfy~\ref{f}, $g_1, \, g_2$ satisfy~\ref{g},
  $(u_o, \, u_a, \, u_b)$ satisfy~\ref{ID}. Call $u_1$ and $u_2$ the
  corresponding solutions to the IBVP~\eqref{eq:original}. Then, for
  all $t\in [0,T[$, the following estimate holds
  \begin{align*}
    & \norma{u_1 (t) - u_2 (t)}_{\L1 (]a,b[)}
    \\ \leq \
    & \exp\left(t \,
      \min\left\{\norma{\partial_u g_1}_{\L\infty ([0,t]\times[a,b] \times U (t))}, \,
      \norma{\partial_u g_2}_{\L\infty([0,t]\times[a,b] \times U (t))}\right\} \right)
    \\
    & \times \left( \int_0^t \int_a^b \norma{\partial_x (f_2-f_1)
      (s,x, \cdot)}_{\L\infty (U(s))} \d{x}\d{s} \right.
    \\
    & \qquad+ \int_0^t \int_a^b \norma{(g_1-g_2) (s,x,
      \cdot)}_{\L\infty (U (s))} \d{x}\d{s}
    \\
    & \qquad+ \int_0^t \norma{\partial_u (f_2-f_1) (s, \cdot,
      \cdot)}_{\L\infty (]a,b[\times U (s))} \, \min\left\{\tv
      \left(u_1 (s)\right), \,\tv \left(u_2 (s)\right)\right\} \d{s}
    \\
    & \left. \qquad+ 2 \int_0^t \norma{(f_2-f_1)(s,a,
      \cdot)}_{\L\infty(U (s))} \d{s} +2 \int_0^t
      \norma{(f_2-f_1)(s,b, \cdot)}_{\L\infty(U (s))} \d{s}\right),
  \end{align*}
  where $U(s)$ is as in~\eqref{eq:30}.
\end{theorem}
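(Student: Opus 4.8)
The plan is to prove the estimate by Kru\v{z}kov's doubling of variables, adapted to the two boundaries $x=a$, $x=b$ and to the presence of distinct flux--source pairs. Since Theorem~\ref{thm:main} guarantees $u_1, u_2 \in (\L\infty \cap \BV)([0,T[ \times ]a,b[; \R)$, I would use the BLN formulation of Definition~\ref{def:solBLN}, whose boundary terms are expressed explicitly through the traces $u_i(t,a^+)$ and $u_i(t,b^-)$. I would write inequality~\eqref{eq:BLN} for $u_1 = u_1(t,x)$ (relative to $f_1$, $g_1$) with Kru\v{z}kov constant $k = u_2(s,y)$, and symmetrically for $u_2 = u_2(s,y)$ (relative to $f_2$, $g_2$) with $k = u_1(t,x)$, testing both against
\begin{displaymath}
  \phi(t,x,s,y) = \psi\!\left(\tfrac{t+s}{2}, \tfrac{x+y}{2}\right)
  \, \rho_\epsilon\!\left(\tfrac{t-s}{2}\right)
  \, \rho_\epsilon\!\left(\tfrac{x-y}{2}\right),
\end{displaymath}
with $\psi \in \Cc1 ([0,T[ \times ]a,b[; \R^+)$ and $\rho_\epsilon$ a standard mollifier, and then summing the two inequalities after integration in all four variables.

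The interior diagonal terms would then be grouped as follows. The two time-derivative contributions merge, since $(\del_t + \del_s)\phi$ acts only on $\psi$, producing in the limit $\int\int \modulo{u_1 - u_2}\,\del_t\psi$. In the source contribution one writes $\sgn(u_1-u_2)\bigl(g_1(t,x,u_1) - g_2(s,y,u_2)\bigr)$ and splits it, collapsing $(s,y) \to (t,x)$, as $\bigl[g_i(\cdot,u_1) - g_i(\cdot,u_2)\bigr] + \bigl[g_j(\cdot,u_2) - g_{j'}(\cdot,u_2)\bigr]$: the first piece is bounded by $\norma{\del_u g_i}_{\L\infty}\,\modulo{u_1-u_2}$ and drives the Gronwall factor, the freedom in choosing $i$ yielding the $\min$ in the exponent, while the second produces the source-difference term $\norma{(g_1-g_2)(s,x,\cdot)}_{\L\infty(U(s))}$. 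The same splitting applied to the $\del_x f$ terms of~\eqref{eq:BLN} isolates $\norma{\del_x(f_2-f_1)(s,x,\cdot)}_{\L\infty(U(s))}$ from a remainder controlled by $\norma{\del^2_{xu}f}_{\L\infty}\,\modulo{u_1-u_2}$. For the convection terms, the spatial mollifier splits $\del_x\phi$, $\del_y\phi$ into a part acting on the center variable and a part acting on $\rho_\epsilon((x-y)/2)$; the latter, after an integration by parts, transfers a derivative onto $(f_2-f_1)(t,x,u_i)$, and since $\del_x\bigl[(f_2-f_1)(t,x,u_i(t,x))\bigr]$ contains $\del_u(f_2-f_1)\,\del_x u_i$, the bound $\norma{\del_u(f_2-f_1)}_{\L\infty}\,\modulo{\del_x u_i}$ integrates to $\norma{\del_u(f_2-f_1)(s,\cdot,\cdot)}_{\L\infty(]a,b[\times U(s))}\,\tv(u_i(s))$, the choice of $i$ giving the $\min\{\tv(u_1(s)), \tv(u_2(s))\}$.

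The remaining contributions are the center part of the convection terms, which as $\psi \to \caratt{[0,t]}\,\caratt{]a,b[}$ concentrates into boundary integrals at $x=a$ and $x=b$, together with the explicit boundary terms of~\eqref{eq:BLN}. This boundary analysis is the hard part. The two solutions satisfy boundary conditions relative to \emph{different} fluxes, so one cannot directly cancel the trace terms; instead I would invoke the BLN--boundary conditions of Remark~\ref{rem:blnbc}, which for $k \in \I(u_i(t,a^+), u_a(t))$ give $\sgn(u_i(t,a^+)-u_a)\bigl(f_i(t,a,u_i(t,a^+)) - f_i(t,a,k)\bigr) \leq 0$ and the analogous inequality at $b$. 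Inserting and subtracting $f_1$ against $f_2$ in the trace expressions, the sign conditions render the matching parts nonpositive (favourable, since the summed inequality is of the form $\geq 0$), while the mismatch is bounded by the flux differences at the boundary, $\norma{(f_2-f_1)(s,a,\cdot)}_{\L\infty(U(s))}$ and $\norma{(f_2-f_1)(s,b,\cdot)}_{\L\infty(U(s))}$; accounting for both solutions accumulates the factor $2$ in each boundary term.

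Finally, passing $\rho_\epsilon$ to a Dirac mass collapses $(s,y) \to (t,x)$, and letting $\psi$ approach $\caratt{[0,t]}\,\caratt{]a,b[}$ turns $\del_t\psi$ into the difference of time-traces at $0$ and $t$; the contribution at $0$ vanishes because $u_1$ and $u_2$ share the same initial datum $u_o$. This leaves $\norma{u_1(t)-u_2(t)}_{\L1(]a,b[)}$ bounded by the time-integrals of the flux-difference, source-difference and boundary terms assembled above, plus the Gronwall driver $\int_0^t \min\bigl\{\norma{\del_u g_1}_{\L\infty}, \norma{\del_u g_2}_{\L\infty}\bigr\}\,\norma{u_1(s)-u_2(s)}_{\L1(]a,b[)}\,\d{s}$. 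An application of Gronwall's lemma produces the exponential prefactor and yields the stated estimate, with $U(s)$ as in~\eqref{eq:30}.
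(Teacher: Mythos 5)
Your overall architecture coincides with the paper's: a Kru\v{z}kov doubling with test functions supported away from $x=a$ and $x=b$, a subsequent cutoff limit that concentrates the convection term into trace integrals at the boundary, the BLN conditions of Remark~\ref{rem:blnbc} (each solution with respect to its \emph{own} flux, both with the same data $u_a$, $u_b$) combined with adding and subtracting $f_2-f_1$ to bound those traces --- this is exactly the paper's case analysis, which also produces the factor $2$ --- and a final Gronwall step plus the symmetrization $(u_1,f_1,g_1)\leftrightarrow(u_2,f_2,g_2)$ yielding the two minima. One small inaccuracy: since your $\psi$ is compactly supported in $]a,b[$, the explicit boundary terms of~\eqref{eq:BLN} vanish identically and do not ``accompany'' the concentration of the convection term; the boundary information enters only through the BLN trace conditions, precisely as in the paper, which deliberately keeps the doubling away from the boundary and recovers the traces afterwards via a cutoff $\Psi(t)\left(1-\Phi_\epsilon(x)\right)$.

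There is, however, a genuine flaw in your interior bookkeeping. You treat the entropy terms $-\sgn(u_1-u_2)\,\del_x f_1(t,x,u_2)\,\phi$ and $+\sgn(u_1-u_2)\,\del_y f_2(s,y,u_1)\,\phi$ by collapsing to the diagonal and splitting off $\del_x(f_2-f_1)$, leaving, as you write, a remainder controlled by $\norma{\del^2_{xu}f}_{\L\infty}\,\modulo{u_1-u_2}$. If that remainder were really present it would enter the Gronwall driver, and the prefactor would become $\exp\bigl(t\,(\min_i\norma{\del_u g_i}+\norma{\del^2_{xu}f})\bigr)$ --- strictly weaker than the stated estimate; yet in your final paragraph this remainder has silently disappeared. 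In fact no such remainder occurs, but seeing this requires a different grouping: \emph{before} collapsing the mollifier, the $\del_x f_1(t,x,k)$ and $\del_y f_2(s,y,k)$ terms must be paired with the space-translation parts of the convection terms, $\left[f_1(s,y,u_2)-f_1(t,x,u_2)\right]\del_x\psi_h$ and $\left[f_2(s,y,u_1)-f_2(t,x,u_1)\right]\del_y\psi_h$ --- the paper's $I_3$ in~\eqref{eq:I3} --- and this block vanishes in the limit $h\to 0$ by Kru\v{z}kov's classical cancellation, see~\eqref{eq:24}. Both the term $\norma{\del_x(f_2-f_1)}$ and the total variation term then arise from the single flux-difference block $I_4$ in~\eqref{eq:I4}, through one integration by parts and the BV chain rule of Bouchut--Perthame/Karlsen--Risebro, which splits $\d{}_y\,(f_2-f_1)\left(s,y,u_2(s,y)\right)$ into $\del_y(f_2-f_1)(s,y,u_2)$ and $\del_u(f_2-f_1)\,\del_y u_2$, the latter integrating against $\modulo{\del_y u_2}$ to give $\tv\left(u_2(s)\right)$. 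Your proposal needs this regrouping (or an explicit proof that your two diagonal remainders cancel against each other) to reach the sharp exponent; everything else, including the boundary analysis, is in line with the paper's proof.
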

\noindent The proof is postponed to Section~\ref{sec:stabflux}.

\section{Existence of weak entropy solutions}
\label{sec:existence}

Consider a space step $\dx$ such that $b-a = N \Delta x$, $N \in\N$,
and a time step $\dt$ subject to a CFL condition which will be
specified later.  For $j=1,\ldots,N$, introduce the following notation
\begin{align*}
  y_{k} = \ & (k-1/2) \dx & y_{k+1/2} = \ & k \dx & \mbox{for } & k
                                                                  \in \interi,
\end{align*}
and let $x_{j+1/2}=a + j \dx = a + y_{j + 1/2}$ be the cells
interfaces, for $j=0, \ldots, N$, and $x_j= a + (j-1/2)\dx = a + y_j$
the cells centres, for $j=1, \ldots, N$. Moreover, set
$N_T = \lfloor T/\dt \rfloor$ and, for $n=0, \ldots, N_T$, let
$t^n=n\dt$ be the time mesh.  Set $\lambda = \dt / \dx$ and let
$\alpha \geq 1$ be the viscosity coefficient.

Approximate the initial datum $u_o$ and the boundary data as follows:
\begin{align*}
  &u_j^0 = \frac{1}{\dx} \int_{x_{j-1/2}}^{x_{j+1/2}} u_o(x) \d{x},
    \qquad j=1,\ldots,N,
  \\
  &u_a^n = \frac{1}{\dt} \int_{t^n}^{t^{n+1}} u_a(t) \d{t}, \quad
    u_b^n = \frac{1}{\dt} \int_{t^n}^{t^{n+1}} u_b(t) \d{t}, \qquad n=0,
    \ldots, N_T-1.
\end{align*}
Introduce moreover the notation $u_0^n = u_a^n$ and
$u_{N+1}^n = u_b^n$.

We define a piecewise constant approximate solution $u_{\Delta}$
to~\eqref{eq:original} as
\begin{equation}
  \label{eq:udelta}
  u_\Delta (t,x) = u_j^n \quad \mbox{ for } \quad
  \left\{
    \begin{array}{r@{\;}c@{\;}l}
      t & \in &[t^n, t^{n+1}[ \,,
      \\
      x & \in & [x_{j-1/2}, x_{j+1/2}[ \,,
    \end{array}
  \right.
  \quad \mbox{ where } \quad
  \begin{array}{r@{\;}c@{\;}l}
    n & = & 0, \ldots, N_T-1,
    \\
    j & = &  1,\ldots,N,
  \end{array}
\end{equation}
through a Lax--Friedrichs type scheme together with operator
splitting, in order to treat the source term.

In particular, the algorithm is defined as follows:
\begin{lgrthm}
  \label{alg:1}
  \begin{align}
    &\texttt{for } n=0,\ldots N_T-1 \nonumber
    \\
    \nonumber & \quad \texttt{for } j=1, \ldots, N
    \\
    & \quad\quad F^n_{j+1/2} (\uh{j}, \uh{j+1}) = \frac{1}{2} \left[ f
      (t^n, x_{j+1/2}, \uh{j}) + f (t^n, x_{j+1/2}, \uh{j+1}) \right]
      - \frac{\alpha}{2} \left(\uh{j+1} - \uh{j} \right)
      \label{eq:numFl}
    \\
    & \quad \quad u_j^{n+1/2} = u_j^n - \lambda \left(F_{j+1/2}^n
      (\uh{j}, \uh{j+1}) - F_{j-1/2}^n (\uh{j-1}, \uh{j})\right)
      \label{eq:scheme1}
    \\
    & \quad \quad u_j^{n+1} = u_j^{n+1/2} + g
      \left(t^n,x_j,u_j^{n+1/2}\right) \, \dt
      \label{eq:scheme2}
    \\
    &\quad \texttt{end}\nonumber
    \\
    & \texttt{end} \nonumber
  \end{align}
  % }
\end{lgrthm}
\noindent We require, moreover, the following CFL condition:
\begin{align}
  \label{eq:CFL}
  \alpha \geq \ & L_f (T), & \lambda \leq\frac{1}{3 \, \alpha}.
\end{align}

\noindent The proof of the convergence of approximate solutions
consists of several steps, whose aim is to show that the sequence
verifies the hypotheses of Helly's compactness theorem.

\subsection{\texorpdfstring{$\L\infty$}{L infinity} bound}
\label{sec:linf}

\begin{lemma}
  \label{lem:linf}
  Let~\ref{f}, \ref{g}, \ref{ID} and~\eqref{eq:CFL} hold.  Then, for
  all $t\in ]0,T[$, $u_\Delta$ in~\eqref{eq:udelta} defined through
  Algorithm~\ref{alg:1} satisfies
  \begin{equation}
    \label{eq:linf}
    \norma{u_\Delta (t, \cdot)}_{\L\infty (]a,b[)}
    \leq \mathcal{U} (t)
  \end{equation}
  where
  \begin{equation}
    \label{eq:Ut}
    \mathcal{U} (t) =  \left(
      \max\left\{\norma{u_o}_{\L\infty (]a,b[)}, \,
        \norma{u_a}_{\L\infty ([0,t])} , \,\norma{u_b}_{\L\infty ([0,t])}\right\} + t \, \mathcal{C}_1 (t)\right)
    e^{\mathcal{C}_2 (t) \, t},
  \end{equation}
  with
  \begin{align}
    \label{eq:c1}
    \mathcal{C}_1 (t) = \
    & \norma{\partial_x f (\cdot, \cdot,
      0)}_{\L\infty ([0,t] \times [a,b])} + \norma{g (\cdot, \cdot,
      0)}_{\L\infty ([0,t] \times [a,b])},
    \\ \label{eq:c2}
    \mathcal{C}_2 (t) = \
    & \norma{\partial^2_{xu} f }_{\L\infty
      ([0,t] \times [a,b] \times \reali)} + \norma{\partial_u
      g}_{\L\infty ([0,t] \times [a,b] \times \reali)}.
  \end{align}
\end{lemma}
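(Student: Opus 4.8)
The plan is to prove~\eqref{eq:linf} by induction on the time index $n$, establishing at each step a discrete maximum principle for the two half-steps of Algorithm~\ref{alg:1}. Throughout I set $V^n = \max_{0\le j\le N+1} \snr{\uh{j}}$, a maximum that includes the boundary ghost values $u_0^n = u_a^n$ and $u_{N+1}^n = u_b^n$. Since $u_j^0$, $u_a^n$ and $u_b^n$ are averages of $u_o$, $u_a$ and $u_b$ over subintervals contained in $[0,t]$, one has $V^0 \le M$ and $\snr{u_a^n}, \snr{u_b^n} \le M$ for all relevant $n$, where $M = \max\{\norma{u_o}_{\L\infty(]a,b[)}, \norma{u_a}_{\L\infty([0,t])}, \norma{u_b}_{\L\infty([0,t])}\}$. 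The target of the induction is $V^n \le \mathcal{U}(t^n)$; since $\mathcal{U}$ is nondecreasing in $t$ and $u_\Delta(t,\cdot) = u_j^n$ for $t \in [t^n, t^{n+1}[$, this yields~\eqref{eq:linf}.

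First I would treat the convective half-step~\eqref{eq:scheme1}. Substituting~\eqref{eq:numFl} and applying the mean value theorem to the flux differences --- in the variable $u$ for the terms coupling neighbouring cells, which produces factors $\partial_u f$ with $\snr{\partial_u f}\le L_f$, and in the variable $x$ for the terms $f(t^n,x_{j+1/2},\uh{j}) - f(t^n,x_{j-1/2},\uh{j})$, which produces a factor $\partial_x f \cdot \dx$ --- I would rewrite $u_j^{n+1/2}$ as $\theta_-\uh{j-1} + \theta_0\uh{j} + \theta_+\uh{j+1}$ plus a spatial correction of the form $-\tfrac{\lambda}{2}\dx\bigl(\partial_x f(t^n,\eta_j,\uh{j}) + \partial_x f(t^n,\eta_j',\uh{j})\bigr)$. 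Under the CFL condition~\eqref{eq:CFL} the off-diagonal weights $\theta_\pm = \tfrac{\lambda}{2}(\alpha \mp \partial_u f)$ are nonnegative because $\alpha \ge L_f(T)$, the diagonal weight is $\ge 1 - 2\lambda\alpha \ge 1/3$ because $\lambda\alpha \le 1/3$, and the three weights sum to $1$. Thus $u_j^{n+1/2}$ is a convex combination up to the spatial term, and writing $\partial_x f(t^n,\eta,\uh{j}) = \partial_x f(t^n,\eta,0) + \partial^2_{xu}f(t^n,\eta,\xi)\,\uh{j}$ together with $\lambda\dx = \dt$ gives $\snr{u_j^{n+1/2}} \le (1 + \dt\,P)\,V^n + \dt\,A$, where $A = \norma{\partial_x f(\cdot,\cdot,0)}_{\L\infty}$ and $P = \norma{\partial^2_{xu}f}_{\L\infty}$.

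Next I would handle the source half-step~\eqref{eq:scheme2}: the decomposition $g(t^n,x_j,v) = g(t^n,x_j,0) + \partial_u g(t^n,x_j,\xi)\,v$ gives $\snr{u_j^{n+1}} \le (1+\dt\,Q)\snr{u_j^{n+1/2}} + \dt\,B$, with $Q = \norma{\partial_u g}_{\L\infty}$ and $B = \norma{g(\cdot,\cdot,0)}_{\L\infty}$. Recalling $\mathcal{C}_1 = A+B$ and $\mathcal{C}_2 = P+Q$, and that $\snr{u_a^{n+1}}, \snr{u_b^{n+1}} \le M \le \mathcal{U}(t^{n+1})$, the induction step reduces to checking that $(1+\dt\,P)(1+\dt\,Q)\,\mathcal{U}(t^n) + (1+\dt\,Q)\dt\,A + \dt\,B \le \mathcal{U}(t^{n+1})$. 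With $\mathcal{U}(t^n) = (M + t^n\mathcal{C}_1)\,e^{t^n\mathcal{C}_2}$ and $t^{n+1} = t^n + \dt$, this separates into the multiplicative bound $e^{\dt\mathcal{C}_2} \ge 1 + \dt\,\mathcal{C}_2 + \dt^2 PQ$ (true since $(P+Q)^2 \ge 2PQ$) and the additive bound $\dt\,\mathcal{C}_1\,(e^{t^{n+1}\mathcal{C}_2}-1) \ge \dt^2\,QA$, which follows from $e^{t^{n+1}\mathcal{C}_2}-1 \ge \dt\,\mathcal{C}_2$ and $\mathcal{C}_1\mathcal{C}_2 \ge QA$. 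Monotonicity in $t$ of the norms defining $\mathcal{C}_1, \mathcal{C}_2$ lets me replace the time-$t^n$ constants by those at $t^{n+1}$, closing the induction.

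The hard part is the second paragraph: setting up the convex-combination form of the Lax--Friedrichs step for a genuinely $x$-dependent flux, doing the mean-value bookkeeping so that the spatial correction collects exactly the $\partial_x f(\cdot,\cdot,0)$ and $\partial^2_{xu}f$ contributions that build $\mathcal{C}_1$ and $\mathcal{C}_2$, and verifying that~\eqref{eq:CFL} makes all three weights nonnegative. Once the one-step estimate is in hand, closing the induction is a routine discrete Grönwall argument adapted to the precise exponential form of $\mathcal{U}$.
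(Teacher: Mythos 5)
Your proposal is correct and follows essentially the same route as the paper: rewrite the convective half-step as a convex combination with weights $\frac{\lambda}{2}(\alpha \pm \partial_u f)$ that are admissible under the CFL condition~\eqref{eq:CFL}, split the leftover spatial flux variation via the mean value theorem into a $\partial_x f(\cdot,\cdot,0)$ part and a $\partial^2_{xu}f \cdot u$ part (building $\mathcal{C}_1$ and $\mathcal{C}_2$), bound the source half-step by $(1+\dt\,\norma{\partial_u g}_{\L\infty})\snr{u_j^{n+1/2}} + \dt\,\norma{g(\cdot,\cdot,0)}_{\L\infty}$, and iterate. The only deviation is cosmetic: where the paper converts each factor $(1+\dt\,\cdot)$ into an exponential before iterating, you verify the induction inequality against $\mathcal{U}(t^{n+1})$ directly, absorbing the cross terms $\dt^2 PQ$ and $\dt^2 QA$ via $e^x \geq 1+x+x^2/2$ and $\mathcal{C}_1\mathcal{C}_2 \geq QA$ — a valid, if anything more explicit, version of the paper's concluding ``iterative argument.''
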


\begin{proof}
  Fix $j$ between $1$ and $N$, $n$ between $0$ and $N_T-1$, and
  rewrite~\eqref{eq:scheme1} as follows:
  \begin{align}
    \nonumber
    & u_j^{n+1/2}
    \\ \nonumber = \
    & \uh{j} - \lambda \left[
      F_{j+1/2}^n (\uh{j}, \uh{j+1}) \pm F_{j+1/2}^n (\uh{j}, \uh{j})
      \pm F_{j-1/2}^n (\uh{j}, \uh{j})- F_{j-1/2}^n (\uh{j-1}, \uh{j})
      \right]
    \\ \label{eq:1} = \
    & (1 - \beta_j^n - \gamma_j^n) \,
      \uh{j} + \beta_j^n \, \uh{j-1} + \gamma_j^n \, \uh{j+1} - \lambda
      \left(F_{j+1/2}^n (\uh{j}, \uh{j}) - F_{j-1/2}^n (\uh{j}, \uh{j})
      \right),
  \end{align}
  with
  \begin{align}
    \label{eq:betajn}
    \beta_j^n = \
    &
      \begin{cases}
        \lambda \dfrac{F_{j-1/2}^n (\uh{j}, \uh{j})- F_{j-1/2}^n
          (\uh{j-1}, \uh{j})}{\uh{j}-\uh{j-1}} & \mbox{if } \uh{j}
        \neq \uh{j-1},
        \\
        0 & \mbox{if } \uh{j} = \uh{j-1},
      \end{cases}
    \\
    \label{eq:gammajn}
    \gamma_j^n = \
    &
      \begin{cases}
        - \lambda \dfrac{F_{j+1/2}^n (\uh{j}, \uh{j+1})- F_{j+1/2}^n
          (\uh{j}, \uh{j})}{\uh{j+1}-\uh{j}} & \mbox{if } \uh{j} \neq
        \uh{j+1},
        \\
        0 & \mbox{if } \uh{j} = \uh{j+1}.
      \end{cases}
  \end{align}
  Using the explicit expression of the numerical flux~\eqref{eq:numFl}
  and the hypotheses on $f$~\ref{f}, we observe that, whenever
  $\uh{j} \neq \uh{j-1}$,
  \begin{align*}
    \beta_j^n = \
    &\frac{\lambda}{2 \, (\uh{j}-\uh{j-1})}\left[ f
      (t^n, x_{j-1/2}, \uh{j}) - f (t^n, x_{j-1/2}, \uh{j-1}) + \alpha
      \, (\uh{j}-\uh{j-1}) \right]
    \\
    = \
    & \frac{\lambda}{2} \left(\partial_u f (t^n, x_{j-1/2},
      r_{j-1/2}^n) + \alpha \right),
  \end{align*}
  with $r_{j-1/2}^n \in \mathcal{I}\left(\uh{j-1},
    \uh{j}\right)$. Similarly, whenever $\uh{j} \neq \uh{j+1}$,
  \begin{align*}
    \gamma_j^n = \
    & -\frac{\lambda}{2 \, (\uh{j+1}-\uh{j})}\left[ f
      (t^n, x_{j+1/2}, \uh{j+1}) - f (t^n, x_{j+1/2}, \uh{j}) - \alpha
      \, (\uh{j+1}-\uh{j}) \right]
    \\
    = \
    & \frac{\lambda}{2} \left(\alpha - \partial_u f (t^n,
      x_{j+1/2}, r_{j+1/2}^n) \right),
  \end{align*}
  with $r_{j+1/2}^n \in \mathcal{I}\left(\uh{j}, \uh{j+1}\right)$.
  By~\eqref{eq:CFL} we get
  \begin{align}
    \label{eq:5}
    \beta_j^n, \, \gamma_j^n \in
    & \left[0,\frac13\right],
    & (1-\beta_j^n - \gamma_j^n) \in
    & \left[\frac13, 1\right].
  \end{align}
  Compute now
  \begin{align*}
    & \modulo{F_{j+1/2}^n (\uh{j}, \uh{j}) - F_{j-1/2}^n (\uh{j},
      \uh{j}) }
    \\
    = \
    & \modulo{ f (t^n, x_{j+1/2}, \uh{j}) - f (t^n, x_{j-1/2},
      \uh{j})}
    \\
    \leq \
    & \modulo{\partial_x f (t^n, \tilde{x}_j, \uh{j})
      \pm \partial_x f (t^n, \tilde{x}_j,0)} |x_{j+1/2} - x_{j-1/2}|
    \\
    \leq \
    & \dx \, \modulo{\partial_x f (t^n, \tilde{x}_j, 0)} + \dx
      \, \modulo{\uh{j}} \norma{\partial^2_{xu} f }_{\L\infty (([0,t^n]
      \times [a,b] \times \reali;\reali)}
    \\
    \leq \
    & \dx \, \norma{\partial_x f (\cdot, \cdot, 0)}_{\L\infty
      ([0,t^n] \times [a,b])} + \dx \, \modulo{\uh{j}}
      \norma{\partial^2_{xu} f }_{\L\infty (([0,t^n] \times [a,b] \times
      \reali)} ,
  \end{align*}
  where $\tilde{x}_j \in ]x_{j-1/2}, x_{j+1/2}[$. Observe that, thanks
  to~\ref{f}, we have
  $\norma{\partial_x f (\cdot, \cdot, 0)}_{\L\infty ([0,t] \times
    [a,b]; \reali)}< + \infty$ for all $t \in I$. Inserting the above
  estimate into~\eqref{eq:1} and exploiting the bounds~\eqref{eq:5} on
  $\beta_j^n$ and $\gamma_j^n$, we get
  \begin{align*}
    u_j^{n+1/2} \leq \
    & (1 - \beta_j^n - \gamma_j^n) \, \modulo{
      \uh{j}} + \beta_j^n \,\modulo{ \uh{j-1}} + \gamma_j^n \,
      \modulo{ \uh{j+1}} + \lambda \modulo{F_{j+1/2}^n (\uh{j}, \uh{j})
      - F_{j-1/2}^n (\uh{j}, \uh{j})}
    \\
    \leq \
    & (1 - \beta_j^n - \gamma_j^n) \, \norma{u^n}_{\L\infty
      (]a,b[)} + \beta_j^n \, \max\left\{\norma{u^n}_{\L\infty
      (]a,b[)}, \, \modulo{u_a^n} \right\}
    \\
    & + \gamma_j^n \, \max\left\{\norma{u^n}_{\L\infty (]a,b[)}, \,
      \modulo{u_b^n}\right\}
    \\
    & + \lambda \, \dx \left( \norma{\partial_x f (\cdot, \cdot,
      0)}_{\L\infty ([0,t^n] \times [a,b])} + \modulo{\uh{j}}
      \norma{\partial^2_{xu} f }_{\L\infty ([0,t^n] \times [a,b]
      \times \reali)}\right)
    \\
    \leq \
    & \max\left\{\norma{u^n}_{\L\infty (]a,b[)}, \,
      \modulo{u_a^n} , \,\modulo{u_b^n}\right\} \left(1 + \dt \,
      \norma{\partial^2_{xu} f }_{\L\infty ([0,t^n] \times [a,b]
      \times \reali)}\right)
    \\
    & + \dt \, \norma{\partial_x f (\cdot, \cdot, 0)}_{\L\infty
      ([0,t^n] \times [a,b])}
    \\
    \leq \
    & \!\left(\!  \max\left\{\norma{u^n}_{\L\infty (]a,b[)}, \,
      \norma{u_a}_{\L\infty ([0,t^n])}, \,\norma{u_b}_{\L\infty
      ([0,t^n])}\right\}\!  + \dt \, \norma{\partial_x f (\cdot,
      \cdot, 0)}_{\L\infty ([0,t^n] \times [a,b])} \right)
    \\
    & \times \exp\left(\dt \, \norma{\partial^2_{xu} f }_{\L\infty
      ([0,t^n] \times [a,b] \times \reali)}\right) .
  \end{align*}
  By~\eqref{eq:scheme2}, since, thanks to~\ref{g},
  $\norma{g (\cdot, \cdot, 0)}_{\L\infty ([0,t] \times [a,b])}<
  +\infty$ for all $t \in I$, we have
  \begin{align*}
    u_j^{n+1} = \
    & u_j^{n+1/2} + \dt \left(g
      \left(t^n,x_j,u_j^{n+1/2}\right) \pm g
      \left(t^n,x_j,0\right)\right)
    \\
    \leq \
    & \modulo{u_j^{n+1/2}} + \dt \modulo{u_j^{n+1/2}}
      \norma{\partial_u g}_{\L\infty ([0,t^n] \times [a,b] \times
      \reali)} + \dt \norma{g (\cdot, \cdot, 0)}_{\L\infty ([0,t^n]
      \times [a,b])}
    \\
    \leq \
    & \modulo{u_j^{n+1/2}} \left( 1 + \dt \,\norma{\partial_u
      g}_{\L\infty ([0,t^n] \times [a,b] \times \reali)} \right) +
      \dt \norma{g (\cdot, \cdot, 0)}_{\L\infty ([0,t^n] \times [a,b])}
    \\
    \leq \
    & \left( \modulo{u_j^{n+1/2}} + \dt \norma{g (\cdot, \cdot,
      0)}_{\L\infty ([0,t^n] \times [a,b])} \right) \exp\left( \dt
      \,\norma{\partial_u g}_{\L\infty ([0,t^n] \times [a,b] \times
      \reali)}\right)
    \\
    \leq \
    & \Bigl( \max\left\{\norma{u^n}_{\L\infty (]a,b[)}, \,
      \norma{u_a}_{\L\infty ([0,t^n])} , \,\norma{u_b}_{\L\infty
      ([0,t^n])}\right\}
    \\
    & \quad+ \dt \, \norma{\partial_x f (\cdot, \cdot, 0)}_{\L\infty
      ([0,t^n] \times [a,b])} + \dt \norma{g (\cdot, \cdot,
      0)}_{\L\infty ([0,t^n] \times [a,b])} \Bigr)
    \\
    & \times\exp\left(\dt\left( \norma{\partial^2_{xu} f }_{\L\infty
      ([0,t^n] \times [a,b] \times \reali)} + \norma{\partial_u
      g}_{\L\infty ([0,t^n] \times [a,b] \times \reali)}
      \right)\right).
  \end{align*}
  An iterative argument yields the thesis.
\end{proof}

\subsection{BV estimates}
\label{sec:bv}

\begin{proposition}\label{prop:BV} {\bf ($\BV$ estimate in space)}
  Let~\ref{f}, \ref{g}, \ref{ID} and~\eqref{eq:CFL} hold.  Then, for
  $n$ between $1$ and $N_T$, the following estimate holds
  % the map $u_\Delta$ in~\eqref{eq:udelta}
  % defined through Algorithm~\ref{alg:1} satisfies
  \begin{equation}
    \label{eq:spaceBV}
    \sum_{j=0}^{N}
    \modulo{u^{n}_{j+1}-u^{n}_{j}} \leq
    \mathcal{C}_x (t^n),
  \end{equation}
  where
  \begin{equation}
    \label{eq:11}
    \mathcal{C}_x (t^n) =
    e^{\mathcal{C}_2 (t^n) \, t^n}  \left(
      \sum_{j=0}^N \modulo{u_{j+1}^{0}  -u_j^{0}}
      +  \sum_{m=1}^n \modulo{u_{a}^{m}  -u_a^{m-1}}
      +  \sum_{m=1}^n \modulo{u_{b}^{m}  -u_b^{m-1}}
      + t^n \, \mathcal{K}_2 (t^n)\right),
  \end{equation}
  with $\mathcal{C}_2 (t^n)$ defined as in~\eqref{eq:c2} and
  $\mathcal{K}_2 (t^n)$ defined as in~\eqref{eq:K2}.
\end{proposition}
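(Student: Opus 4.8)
The plan is to derive a one-step recursion for the spatial total variation $V^n := \sum_{j=0}^N\modulo{\uh{j+1}-\uh{j}}$, with the convention $u_0^n=u_a^n$ and $u_{N+1}^n=u_b^n$, of the form
\[ V^{n+1} \leq \bigl(1+\dt\,\mathcal{C}_2(t^{n+1})\bigr)\,V^n + \modulo{u_a^{n+1}-u_a^n} + \modulo{u_b^{n+1}-u_b^n} + \dt\,\mathcal{K}_2(t^{n+1}), \]
and then to close it by discrete Gronwall iteration: bounding $(1+\dt\,\mathcal{C}_2)^{n-m}\le e^{\mathcal{C}_2 t^n}$ and $\sum_m\dt=t^n$ reproduces exactly the prefactor and the three contributions in \eqref{eq:11}. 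I analyse the convection half-step \eqref{eq:scheme1} and the source half-step \eqref{eq:scheme2} in turn, keeping the ghost values frozen at $u_a^n,u_b^n$ throughout the step and switching to $u_a^{n+1},u_b^{n+1}$ only at the very end.

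For \eqref{eq:scheme1} I reuse the incremental form already obtained in the proof of Lemma~\ref{lem:linf}: by \eqref{eq:1},
\[ u_j^{n+1/2} = \uh{j} + \gamma_j^n(\uh{j+1}-\uh{j}) - \beta_j^n(\uh{j}-\uh{j-1}) + H_j^n, \qquad H_j^n = -\lambda\bigl(f(t^n,x_{j+1/2},\uh{j})-f(t^n,x_{j-1/2},\uh{j})\bigr), \]
with $\beta_j^n,\gamma_j^n\in[0,\tfrac13]$ from \eqref{eq:5}. Subtracting this identity at consecutive indices writes $u_{j+1}^{n+1/2}-u_j^{n+1/2}$ as a combination of the jumps $\uh{j+2}-\uh{j+1}$, $\uh{j+1}-\uh{j}$, $\uh{j}-\uh{j-1}$ with the \emph{nonnegative} coefficients $\gamma_{j+1}^n$, $1-\beta_{j+1}^n-\gamma_j^n$, $\beta_j^n$. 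Taking absolute values, summing over $j$ and reindexing, every jump $\modulo{\uh{j+1}-\uh{j}}$ receives total weight exactly $1$ (Harten's TVD mechanism); at the two ends the truncated weights $1-\beta_1^n$ and $1-\gamma_N^n$ remain in $[\tfrac23,1]$, so no negative coefficient arises. This leaves only the flux-inhomogeneity remainder $\sum_j\modulo{H_{j+1}^n-H_j^n}+\modulo{H_1^n}+\modulo{H_N^n}$ to estimate.

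Writing $H_j^n=-\dt\,\partial_x f(t^n,\tilde x_j,\uh{j})$ with $\tilde x_j\in\,]x_{j-1/2},x_{j+1/2}[$ (mean value theorem and $\lambda\dx=\dt$) and splitting $\partial_x f(t^n,\tilde x_{j+1},\uh{j+1})-\partial_x f(t^n,\tilde x_j,\uh{j})$ into a $u$-increment and an $x$-increment gives
\[ \sum_j\modulo{H_{j+1}^n-H_j^n} \leq \dt\,\norma{\partial^2_{xu} f}_{\L\infty([0,t^n]\times[a,b]\times\reali)}\,V^n + \dt\,R^n, \]
where the $u$-increment is controlled by $\partial^2_{xu}f$ (globally bounded by~\ref{f}) and $R^n$ gathers the residual $x$-dependence, controllable on the compact set $[0,t^n]\times[a,b]\times[-\mathcal{U}(t^n),\mathcal{U}(t^n)]$ via Lemma~\ref{lem:linf}; together with the two boundary terms $\modulo{H_1^n},\modulo{H_N^n}$, which are of order $\dt$, this feeds $\mathcal{K}_2(t^n)$. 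The source step \eqref{eq:scheme2} is treated identically: splitting $g(t^n,x_{j+1},u_{j+1}^{n+1/2})-g(t^n,x_j,u_j^{n+1/2})$ into a $u$-increment and an $x$-increment multiplies the half-step jumps by $\bigl(1+\dt\,\norma{\partial_u g}_{\L\infty}\bigr)$ and adds a remainder of order $\dt$, producing the $\partial_u g$ part of $\mathcal{C}_2$ in \eqref{eq:c2} and a further $\mathcal{K}_2$ contribution; the two boundary cells, whose ghost neighbours carry no source, only add another term of order $\dt$. Finally, restoring the updated ghost values changes only the first and last jumps, costing at most $\modulo{u_a^{n+1}-u_a^n}+\modulo{u_b^{n+1}-u_b^n}$ by the triangle inequality, which yields the recursion above.

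The main obstacle is the boundary bookkeeping. The Harten reindexing that makes the homogeneous scheme TV-diminishing is exact only in the interior, so one must check that the two truncated end coefficients stay nonnegative and, above all, account cleanly for the variation injected by the time-variation of the discrete boundary data and by the boundary flux and source contributions. It is precisely these effects that produce, respectively, the sums $\sum_m\modulo{u_a^m-u_a^{m-1}}$, $\sum_m\modulo{u_b^m-u_b^{m-1}}$ and the term $t^n\,\mathcal{K}_2(t^n)$ in \eqref{eq:11}; everything else is the standard TVD estimate dressed with the $x$- and $u$-dependence of $f$ and $g$.
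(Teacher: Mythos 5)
Your proposal is correct, and at its core it runs on the same engine as the paper's proof: a Harten-type incremental decomposition of the convection half-step with coefficients in $[0,\tfrac13]$ by \eqref{eq:5} under the CFL condition \eqref{eq:CFL}, an inhomogeneity remainder controlled by $\partial^2_{xu}f$ globally and by $\partial^2_{xx}f$ on the compact set $[0,t^n]\times[a,b]\times[-\mathcal{U}(t^n),\mathcal{U}(t^n)]$ supplied by Lemma~\ref{lem:linf}, a factor $\bigl(1+\dt\,\norma{\partial_u g}_{\L\infty}\bigr)$ from the splitting step, and a discrete Gronwall iteration yielding \eqref{eq:spaceBV}--\eqref{eq:11}. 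Where you genuinely diverge is the bookkeeping, on two counts. First, you difference the incremental form \eqref{eq:1} directly, so the inhomogeneity appears as $H^n_{j+1}-H^n_j$ with $H^n_j=-\lambda\bigl(f(t^n,x_{j+1/2},\uh{j})-f(t^n,x_{j-1/2},\uh{j})\bigr)$, and the weights $\beta^n_j$, $\gamma^n_{j+1}$, $1-\beta^n_{j+1}-\gamma^n_j$ resum to exactly $1$ per jump; the paper instead rewrites each jump as $\mathcal{A}^n_j-\lambda\,\mathcal{B}^n_j$, which forces it to introduce the additional coefficient $\delta^n_j$ in \eqref{eq:deltajn} (the increment quotient of $F^n_{j+1/2}$ rather than $F^n_{j-1/2}$), so that the telescoping \eqref{eq:Ajn} leaves leftover terms $\delta^n_1$, $\delta^n_N$, $\gamma^n_1$, $\gamma^n_N$ that must mesh with the boundary estimates. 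Second --- and this is where most of the paper's proof is spent --- the paper compares the boundary cells against the \emph{new} data $u_a^{n+1}$, $u_b^{n+1}$ inside the half-step computation, which requires ad hoc algebraic identities converting $\beta^n_1$ into $\delta^n_1$ and $\beta^n_N$ into $\delta^n_N$ (the displays leading to \eqref{eq:2} and \eqref{eq:3}) together with separate mean-value estimates of $\lambda\bigl(F^n_{3/2}(\uh{a},\uh{1})-F^n_{1/2}(\uh{a},\uh{1})\bigr)$ and its analogue at $b$; your device of freezing the ghost values at $u_a^n$, $u_b^n$ throughout the step --- whence the end weights $1-\beta^n_1$, $1-\gamma^n_N\in[\tfrac23,1]$ and unit total weight also for the two boundary jumps --- and switching to $u_a^{n+1}$, $u_b^{n+1}$ only at the end via the triangle inequality produces the terms $\modulo{u_a^{n+1}-u_a^n}+\modulo{u_b^{n+1}-u_b^n}$ at no extra cost. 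Both routes give the same one-step recursion, with only immaterial reshuffling of the constants absorbed into $\mathcal{K}_2$ in \eqref{eq:K2} (your $\modulo{H^n_1}$, $\modulo{H^n_N}$ play the role of the paper's boundary contributions involving $\partial_x f(\cdot,\cdot,0)$ and $\partial^2_{xu}f$); the paper's heavier in-step treatment buys nothing additional here, while your frozen-ghost bookkeeping buys a visibly shorter and more transparent argument.
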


\begin{proof}
  For the sake of simplicity, introduce the space
  $\Sigma_n = [0,t^n] \times [a,b] \times [-\mathcal{U} (t^n),
  \mathcal{U} (t^n)]$, with the notation introduced
  in~\eqref{eq:Ut}. By the definition of the
  scheme~\eqref{eq:scheme2}, observe that, for all $j=1, \ldots, N-1$,
  \begin{align*}
    \modulo{u^{n+1}_{j+1} - u^{n+1}_j} = \
    & \modulo{u^{n+1/2}_{j+1} -
      u^{n+1/2}_j + \dt \, \left(g \left(t^n, x_{j+1},
      u^{n+1/2}_{j+1}\right) - g \left(t^n, x_j,
      u^{n+1/2}_{j}\right)\right)}
    \\
    \leq \
    & \modulo{u^{n+1/2}_{j+1} - u^{n+1/2}_j} \, \left(1 + \dt
      \,\norma{\partial_u g}_{\L\infty (\Sigma_n)}\right) + \dt \, \dx
      \, \norma{\partial_xg}_{\L\infty (\Sigma_n)},
  \end{align*}
  where $\norma{\partial_xg}_{\L\infty (\Sigma_n)}$ is bounded, thanks
  to~\ref{g}.  On the other hand, for $j=0$ and $j=N$ we have
  respectively
  \begin{align*}
    \modulo{u^{n+1}_1 - u^{n+1}_a} = \
    & \modulo{u^{n+1/2}_1
      -u^{n+1}_a + \dt \, g \left(t^n, x_1, u^{n+1/2}_1\right) \pm \dt
      \, g\left(t^n,x_1,0\right)}
    \\
    \leq \
    & \modulo{u^{n+1/2}_1 -u^{n+1}_a} + \dt \,
      \norma{\partial_u g}_{\L\infty (\Sigma_n)} \modulo{u^{n+1/2}_1} +
      \dt \norma{g (\cdot, \cdot, 0)}_{\L\infty ([0,t^n]\times [a,b])},
    \\
    \modulo{u^{n+1}_b - u^{n+1}_N} = \
    & \modulo{u^{n+1/2}_b
      -u^{n+1}_N - \dt \, g \left(t^n, x_N, u^{n+1/2}_N\right) \pm \dt
      \, g\left(t^n,x_N,0\right)}
    \\
    \leq \
    & \modulo{u^{n+1/2}_b -u^{n+1}_N} + \dt \,
      \norma{\partial_u g}_{\L\infty (\Sigma_n)} \modulo{u^{n+1/2}_N} +
      \dt \norma{g (\cdot, \cdot, 0)}_{\L\infty ([0,t^n]\times [a,b])}.
  \end{align*}
  Therefore,
  \begin{align}
    \nonumber
    & \sum_{j=0}^N\modulo{u^{n+1}_{j+1} - u^{n+1}_j}
    \\
    \nonumber \leq \
    & e^{\dt \, \norma{\partial_u g}_{\L\infty
      (\Sigma_n)}} \sum_{j=1}^{N-1} \modulo{u^{n+1/2}_{j+1} -
      u^{n+1/2}_j} + \dt \, (b-a) \, \norma{\partial_xg}_{\L\infty
      (\Sigma_n)}
    \\ \nonumber
    & + \modulo{u^{n+1/2}_1 -u^{n+1}_a} +
      \modulo{u^{n+1/2}_b -u^{n+1}_N} + 2 \dt \norma{g (\cdot, \cdot,
      0)}_{\L\infty ([0,t^n]\times [a,b])}
    \\ \nonumber
    & + 2 \, \dt
      \, \norma{\partial_u g}_{\L\infty (\Sigma_n)}
      \norma{u^{n+1/2}}_{\L\infty ([a,b])}
    \\ \nonumber
    \leq \
    & e^{\dt
      \, \norma{\partial_u g}_{\L\infty (\Sigma_n)}} \left(
      \sum_{j=1}^{N-1} \modulo{u^{n+1/2}_{j+1} - u^{n+1/2}_j} +
      \modulo{u^{n+1/2}_1 -u^{n+1}_a} + \modulo{u^{n+1/2}_b
      -u^{n+1}_N} \right)
    \\ \label{eq:4}
    & + \dt \, (b-a) \,
      \norma{\partial_xg}_{\L\infty (\Sigma_n)}
    \\ \nonumber
    & + 2 \dt
      \left( \norma{g (\cdot, \cdot, 0)}_{\L\infty ([0,t^n]\times
      [a,b])}+ \norma{\partial_u g}_{\L\infty (\Sigma_n)}
      \norma{u^{n+1/2}}_{\L\infty ([a,b])}\right).
  \end{align}

  Focus first on
  $\sum_{j=1}^{N-1} \modulo{u^{n+1/2}_{j+1} - u^{n+1/2}_j}$:
  by~\eqref{eq:scheme1} we obtain
  \begin{align*}
    &u^{n+1/2}_{j+1} - u^{n+1/2}_j
    \\
    = \
    & \uh{j+1} - \uh{j}
    \\
    & - \lambda \left[ F^n_{j+3/2} (\uh{j+1}, \uh{j+2}) - F^n_{j+1/2}
      (\uh{j}, \uh{j+1}) - F^n_{j+1/2} (\uh{j}, \uh{j+1}) +
      F^n_{j-1/2} (\uh{j-1}, \uh{j}) \right]
    \\
    & \pm \lambda \, F^n_{j+3/2} (\uh{j}, \uh{j+1}) \pm \, \lambda
      F^n_{j+1/2} (\uh{j-1}, \uh{j})
    \\
    = \
    & \uh{j+1} - \uh{j}
    \\
    & - \lambda \left[ F^n_{j+3/2} (\uh{j+1}, \uh{j+2}) - F^n_{j+1/2}
      (\uh{j}, \uh{j+1}) + F^n_{j+1/2} (\uh{j-1}, \uh{j}) -F^n_{j+3/2}
      (\uh{j}, \uh{j+1}) \right]
    \\
    & - \lambda \left[ F^n_{j+3/2} (\uh{j}, \uh{j+1}) -F^n_{j+1/2}
      (\uh{j-1}, \uh{j}) +F^n_{j-1/2} (\uh{j-1}, \uh{j}) - F^n_{j+1/2}
      (\uh{j}, \uh{j+1}) \right]
    \\
    = \
    & \mathcal{A}_j^n - \lambda \, \mathcal{B}_j^n,
  \end{align*}
  where we set
  \begin{align*}
    \mathcal{A}_j^n = \
    &\uh{j+1} - \uh{j}
    \\
    & - \lambda \left[ F^n_{j+3/2} (\uh{j+1}, \uh{j+2}) - F^n_{j+1/2}
      (\uh{j}, \uh{j+1}) + F^n_{j+1/2} (\uh{j-1}, \uh{j}) -F^n_{j+3/2}
      (\uh{j}, \uh{j+1}) \right],
    \\
    \mathcal{B}_j^n = \
    & F^n_{j+3/2} (\uh{j}, \uh{j+1}) -F^n_{j+1/2}
      (\uh{j-1}, \uh{j}) +F^n_{j-1/2} (\uh{j-1}, \uh{j}) - F^n_{j+1/2}
      (\uh{j}, \uh{j+1}).
  \end{align*}
  Rearrange $\mathcal{A}_j^n$ as follows:
  \begin{align*}
    \mathcal{A}_j^n = \
    & \uh{j+1} - \uh{j} - \lambda \,
      \frac{F^n_{j+3/2} (\uh{j+1}, \uh{j+2}) - F^n_{j+3/2} (\uh{j+1},
      \uh{j+1})}{\uh{j+2}- \uh{j+1}} \, \left(\uh{j+2}-
      \uh{j+1}\right)
    \\
    &- \lambda \, \frac{F^n_{j+3/2} (\uh{j+1}, \uh{j+1}) - F^n_{j+3/2}
      (\uh{j}, \uh{j+1})}{\uh{j+1}- \uh{j}} \, \left(\uh{j+1}-
      \uh{j}\right)
    \\
    & + \lambda \, \frac{F^n_{j+1/2} (\uh{j}, \uh{j+1}) - F^n_{j+1/2}
      (\uh{j}, \uh{j})}{\uh{j+1}- \uh{j}} \, \left(\uh{j+1}-
      \uh{j}\right)
    \\
    & + \lambda \, \frac{F^n_{j+1/2} (\uh{j}, \uh{j}) - F^n_{j+1/2}
      (\uh{j-1}, \uh{j})}{\uh{j}- \uh{j-1}} \, \left(\uh{j}-
      \uh{j-1}\right)
    \\
    = \
    & \delta_j^n \, (\uh{j} - \uh{j-1}) + \gamma_{j+1}^n \,
      (\uh{j+2} - \uh{j+1}) + (1 - \gamma_j^n - \delta_{j+1}^n)
      (\uh{j+1} - \uh{j}),
  \end{align*}
  where
  \begin{equation}
    \label{eq:deltajn}
    \delta_j^n =
    \begin{cases}
      \lambda \dfrac{F_{j+1/2}^n (\uh{j}, \uh{j})- F_{j+1/2}^n
        (\uh{j-1}, \uh{j})}{\uh{j}-\uh{j-1}} & \mbox{if } \uh{j} \neq
      \uh{j-1},
      \\
      0 & \mbox{if } \uh{j} = \uh{j-1},
    \end{cases}
  \end{equation}
  while $\gamma_j^n$ is as in~\eqref{eq:gammajn}. It can be proven
  that $\delta_j^n \in \left[0, 1/3 \right]$. Thus,
  \begin{align}
    \nonumber
    & \sum_{j=1}^{N-1} \modulo{\mathcal{A}_j^n}
    \\ \nonumber
    \leq \
    & \sum_{j=1}^{N-1} \modulo{\uh{j+1} - \uh{j}} +
      \sum_{j=0}^{N-2} \delta_{j+1}^n \, \modulo{\uh{j+1} - \uh{j}} -
      \sum_{j=1}^{N-1} \delta_{j+1}^n \, \modulo{\uh{j+1} - \uh{j}}
    \\
    \nonumber
    & + \sum_{j=2}^{N}\gamma_j^n \, \modulo{\uh{j+1} -
      \uh{j}} - \sum_{j=1}^{N-1} \gamma_j^n \, \modulo{\uh{j+1} -
      \uh{j}}
    \\
    \label{eq:Ajn} = \
    & \sum_{j=1}^{N-1}
      \modulo{\uh{j+1} - \uh{j}} + \delta_1^n \, \modulo{\uh{1} -
      \uh{a}} - \delta_N^n \, \modulo{\uh{N}- \uh{N-1}} + \gamma^n_N
      \, \modulo{\uh{b} - \uh{N}} - \gamma_1^n \, \modulo{\uh{2} -
      \uh{1}}.
  \end{align}
  Focus now on $\mathcal{B}_j^n$:
  \begin{align*}
    \mathcal{B}_j^n = \
    & \frac12 \left[ f (t^n,x_{j+3/2}, \uh{j} ) +
      f (t^n,x_{j+3/2}, \uh{j+1} ) \right.
    \\
    & - f (t^n,x_{j+1/2}, \uh{j}) - f (t^n,x_{j+1/2}, \uh{j+1} )
    \\
    & + f (t^n,x_{j-1/2}, \uh{j-1} ) + f (t^n,x_{j-1/2}, \uh{j})
    \\
    & \left.  - f (t^n,x_{j+1/2}, \uh{j-1} ) - f (t^n,x_{j+1/2},
      \uh{j} ) \right]
    \\
    = \
    & \frac12 \left( f (t^n,x_{j+3/2}, \uh{j+1} ) - f
      (t^n,x_{j+1/2}, \uh{j+1} ) \right)
    \\
    & + \frac12 \left( f (t^n,x_{j-1/2}, \uh{j-1} ) - f
      (t^n,x_{j+1/2}, \uh{j-1} ) \right)
    \\
    & + \frac12 \left( f (t^n,x_{j+3/2}, \uh{j} ) -2 \, f
      (t^n,x_{j+1/2}, \uh{j} ) + f (t^n,x_{j-1/2}, \uh{j} ) \right)
    \\
    = \
    & \frac{\dx}{2} \, \partial_x f (t^n,\tilde{x}_{j+1}, \uh{j+1}
      ) - \frac{\dx}2 \, \partial_x f (t^n,\tilde{x}_{j}, \uh{j-1})
    \\
    & + \frac{\dx}2 \left(
      \partial_x f (t^n,\bar{x}_{j+1}, \uh{j} ) - \partial_x f
      (t^n,\bar{x}_{j}, \uh{j} ) \right)
    \\
    = \
    & \frac{\dx}{2}\Bigl( (\tilde{x}_{j+1} - \tilde{x}_j)
      \, \partial_{xx}^2 f (t^n,\hat{x}_{j+1/2}, \uh{j+1} ) + (\uh{j+1}
      - \uh{j-1}) \, \partial_{xu}^2 f (t^n,\tilde{x}_{j},
      \tilde{u}^n_{j} )
    \\
    & \qquad + (\bar{x}_{j+1} - \bar{x}_j) \, \partial^2_{xx} f (t^n,
      \bar{x}_{j+1/2}, \uh{j})\Bigr),
  \end{align*}
  where
  \begin{align*}
    \tilde{x}_{j+1}, \bar{x}_{j+1} \in \
    & ]x_{j+1/2}, x_{j+3/2}[,
    &
      \tilde{x}_{j}, \bar{x}_{j} \in \
    & ]x_{j-1/2}, x_{j+1/2}[,
    \\
    \hat{x}_{j+1/2} \in \
    & ]\tilde{x}_{j}, \tilde{x}_{j+1}[,
    &
      \bar{x}_{j+1/2} \in \
    & ]\bar{x}_j, \bar{x}_{j+1}[,
    \\
    \tilde{u}^n_{j} \in \
    & \mathcal{I}\left(\uh{j-1}, \uh{j+1}\right).
  \end{align*}
  Notice that
  \begin{align*}
    \modulo{\tilde{x}_{j+1} - \tilde{x}_j} \leq \
    & 2 \, \dx,
    &
      \modulo{\bar{x}_{j+1} - \bar{x}_j} \leq \
    & 2 \, \dx.
  \end{align*}
  Hence,
  \begin{displaymath}
    \modulo{\mathcal{B}^n_j} \leq
    2 \, (\dx)^2 \,
    \norma{\partial_{xx}^2 f}_{\L\infty (\Sigma_n)} + \frac{\dx}2 \,
    \norma{\partial_{xu}^2 f}_{\L\infty (\Sigma_n)} \modulo{\uh{j+1} -
      \uh{j-1}},
  \end{displaymath}
  so that
  \begin{align}
    \nonumber \sum_{j=1}^{N-1} \lambda \, \modulo{\mathcal{B}_j^n}
    \leq \
    & \frac{\dt}{2} \norma{\partial_{xu}^2 f}_{\L\infty
      (\Sigma_n)} \sum_{j=1}^{N-1} \modulo{\uh{j+1} - \uh{j-1}} + 2 \,
      \dt \, (b-a) \, \norma{\partial_{xx}^2 f}_{\L\infty (\Sigma_n)}
    \\
    \label{eq:Bjn}
    \leq \
    & \dt \norma{\partial_{xu}^2 f}_{\L\infty (\Sigma_n)}
      \sum_{j=0}^{N-1} \modulo{\uh{j+1} - \uh{j}} + 2 \, \dt \, (b-a) \,
      \norma{\partial_{xx}^2 f}_{\L\infty (\Sigma_n)}.
  \end{align}
  Therefore, collecting together the estimates~\eqref{eq:Ajn}
  and~\eqref{eq:Bjn} we get:
  \begin{align}
    \nonumber
    & \sum_{j=1}^{N-1} \modulo{u_{j+1}^{n+1/2} - u_j^{n+1/2}}
    \\ \nonumber \leq \
    & \sum_{j=1}^{N-1}
      \modulo{\uh{j+1} - \uh{j}} + \delta_1^n \, \modulo{\uh{1} -
      \uh{a}} - \delta_N^n \, \modulo{\uh{N}- \uh{N-1}} + \gamma^n_N
      \, \modulo{\uh{b} - \uh{N}} - \gamma_1^n \, \modulo{\uh{2} -
      \uh{1}}
    \\
    \label{eq:6}
    & + \dt \norma{\partial_{xu}^2
      f}_{\L\infty (\Sigma_n)} \sum_{j=0}^{N-1} \modulo{\uh{j+1} -
      \uh{j}} + 2 \, \dt \, (b-a) \, \norma{\partial_{xx}^2
      f}_{\L\infty (\Sigma_n)}.
  \end{align}

  Focus now on the terms involving the boundary data
  in~\eqref{eq:4}. With the notation~\eqref{eq:betajn},
  \eqref{eq:gammajn} and~\eqref{eq:deltajn}, we observe that
  \begin{align*}
    &\beta_1^n \, (\uh{1}- \uh{a}) + \lambda \left( F_{3/2}^n (\uh{1},
      \uh{1}) - F_{1/2}^n (\uh{1}, \uh{1}) \right)
    \\
    = \
    & \lambda \left[ F_{1/2}^n (\uh{1}, \uh{1}) - F_{1/2}^n
      (\uh{a}, \uh{1}) + F_{3/2}^n (\uh{1}, \uh{1}) - F_{1/2}^n
      (\uh{1}, \uh{1}) \pm F_{3/2}^n (\uh{a}, \uh{1}) \right]
    \\
    = \
    & \delta_1^n \, (\uh{1} - \uh{a}) + \lambda \left( F_{3/2}^n
      (\uh{a}, \uh{1}) - F_{1/2}^n (\uh{a}, \uh{1}) \right).
  \end{align*}
  Hence, from the definition of the scheme~\eqref{eq:scheme1},we have
  \begin{align}
    \nonumber
    & u_1^{n+1/2} - u_a^{n+1}
    \\ \nonumber = \
    & (1 -
      \beta_1^n - \gamma_1^n) \uh{1} + \beta_1^n \, \uh{a} + \gamma_1^n
      \, \uh{2} - \lambda \left( F_{3/2}^n (\uh{1}, \uh{1}) - F_{1/2}^n
      (\uh{1}, \uh{1}) \right) - u_a^{n+1} \pm \uh{a}
    \\
    \nonumber =\
    & \gamma_1^n (\uh{2} - \uh{1}) + (1-\beta_1^n) (\uh{1}- \uh{a}) +
      (u_a^{n+1} - \uh{a}) - \lambda \left( F_{3/2}^n (\uh{1}, \uh{1}) -
      F_{1/2}^n (\uh{1}, \uh{1}) \right)
    \\
    \label{eq:2} = \
    & \gamma_1^n (\uh{2} - \uh{1}) + (1-\delta_1^n) (\uh{1}- \uh{a}) +
      (u_a^{n+1} - \uh{a}) - \lambda \left( F_{3/2}^n (\uh{a}, \uh{1}) -
      F_{1/2}^n (\uh{a}, \uh{1}) \right).
  \end{align}
  Since
  \begin{align*}
    & \lambda \left( F_{3/2}^n (\uh{a}, \uh{1}) - F_{1/2}^n (\uh{a},
      \uh{1}) \right)
    \\
    = \
    & \frac{\lambda}{2}\left[ f (t^n, x_{3/2}, \uh{a}) + f (t^n,
      x_{3/2}, \uh{1}) - f (t^n, x_{1/2}, \uh{a}) - f (t^n, x_{1/2},
      \uh{1}) \right]
    \\
    = \
    & \frac{\lambda}{2}\left[ \dx \, \partial_x f (t^n,
      \tilde{x}_{1}, \uh{a}) + \dx \, \partial_x f (t^n,
      \overline{x}_{1}, \uh{1}) \pm \dx \, \partial_x f (t^n,
      \tilde{x}_{1}, 0) \pm \dx \, \partial_x f (t^n,
      \overline{x}_{1}, 0) \right]
    \\
    = \
    & \frac{\dt}{2}\left(
          \partial_{xu}^2 f (t^n, \tilde{x}_1, \tilde{u}^n_{a}) \, \uh{a}
          + \partial_{xu}^2 f (t^n, \overline{x}_1, \overline{u}^n_{1}) \,
          \uh{1} + \partial_x f (t^n, \tilde{x}_{1}, 0) + \partial_x f
          (t^n, \overline{x}_{1}, 0) \right),
  \end{align*}
  where $\tilde{x}_1, \overline{x}_1 \in ]x_{1/2}, x_{3/2}[$,
  $\tilde{u}^n_{a} \in \mathcal{I} (0, \uh{a})$ and
  $\overline{u}_1^n \in \mathcal{I} (0,\uh{1})$, we conclude
  \begin{align*}
    & \lambda \, \modulo{ F_{3/2}^n (\uh{a}, \uh{1}) - F_{1/2}^n
      (\uh{a}, \uh{1}) }
    \\ \leq \
    & \dt \, \left(\frac12
      \,\norma{\partial_{xu}^2 f}_{\L\infty (\Sigma_n)} (|\uh{a}| +
      |\uh{1}|) + \norma{\partial_x f (\cdot, \cdot,0)}_{\L\infty
      ([0,t^n] \times[a,b])} \right).
  \end{align*}
  By the positivity of the coefficients involved in~\eqref{eq:2}, we
  obtain
  \begin{align}
    \label{eq:diff1a}
    \modulo{u_1^{n+1/2} - u_a^{n+1}} \leq \
    & \gamma_1^n \, |\uh{2} -
      \uh{1}| + (1-\delta_1^n) |\uh{1}- \uh{a}| + |u_a^{n+1} - \uh{a}|
    \\ \nonumber
    & +\dt \, \left(\frac12 \,\norma{\partial_{xu}^2
      f}_{\L\infty (\Sigma_n)} (|\uh{a}| + |\uh{1}|) +
      \norma{\partial_x f (\cdot, \cdot,0)}_{\L\infty ([0,t^n]
      \times[a,b])} \right).
  \end{align}
  Similarly as before, compute
  \begin{align*}
    &\beta_N^n (\uh{N}- \uh{N-1}) + \lambda \left( F_{N+1/2}^n
      (\uh{N}, \uh{N}) - F_{N-1/2}^n (\uh{N}, \uh{N}) \right)
    \\
    = \
    & \lambda \bigl[ F_{N-1/2}^n (\uh{N}, \uh{N}) - F_{N-1/2}^n
      (\uh{N-1}, \uh{N}) + F_{N+1/2}^n (\uh{N}, \uh{N})
    \\
    & \quad- F_{N-1/2}^n (\uh{N}, \uh{N}) \pm F_{N+1/2}^n (\uh{N-1},
      \uh{N}) \bigr]
    \\
    = \
    &\delta_N^n \, (\uh{N}- \uh{N-1}) + \lambda \left( F_{N+1/2}^n
      (\uh{N-1}, \uh{N}) - F_{N-1/2}^n (\uh{N-1}, \uh{N}) \right).
  \end{align*}
  Therefore, concerning the other boundary term, we have
  \begin{align}
    \nonumber u_b^{n+1} - u_N^{n+1/2} = \
    & u_b^{n+1} \pm \uh{b} - (1
      - \beta_N^n - \gamma_N^n) \, \uh{N} + \beta_N^n \, \uh{N-1} +
      \gamma_N^n \, \uh{b}
    \\ \nonumber
    & + \lambda \left( F_{N+1/2}^n
      (\uh{N}, \uh{N}) - F_{N-1/2}^n (\uh{N}, \uh{N}) \right)
    \\
    \nonumber =\
    & (u_b^{n+1} - \uh{b}) + (1 - \gamma_N^n) (\uh{b} -
      \uh{N}) + \beta_N^n \, (\uh{N}- \uh{N-1})
    \\ \nonumber
    &+ \lambda
      \left( F_{N+1/2}^n (\uh{N}, \uh{N}) - F_{N-1/2}^n (\uh{N}, \uh{N})
      \right)
    \\ \label{eq:3} = \
    & (u_b^{n+1} - \uh{b}) + (1 -
      \gamma_N^n) (\uh{b} - \uh{N}) + \delta_N^n \, (\uh{N}- \uh{N-1})
    \\ \label{eq:3a}
    & + \lambda \left( F_{N+1/2}^n (\uh{N-1}, \uh{N})
      - F_{N-1/2}^n (\uh{N-1}, \uh{N}) \right),
  \end{align}
  Observing that
  \begin{align*}
    & \lambda \left( F_{N+1/2}^n (\uh{N-1}, \uh{N}) - F_{N-1/2}^n
      (\uh{N-1}, \uh{N}) \right)
    \\
    = \
    & \frac{\lambda}{2}\left[ f (t^n, x_{N+1/2}, \uh{N-1}) + f
      (t^n, x_{N+1/2}, \uh{N}) - f (t^n, x_{N-1/2}, \uh{N-1}) - f
      (t^n, x_{N-1/2}, \uh{N}) \right]
    \\
    = \
    & \frac{\lambda}{2}\left[ \dx \, \partial_x f (t^n,
      \tilde{x}_{N}, \uh{N-1}) + \dx \, \partial_x f (t^n,
      \overline{x}_{N}, \uh{N}) \pm \dx \, \partial_x f (t^n,
      \tilde{x}_{N},0) \pm \dx \, \partial_x f (t^n, \overline{x}_{N},
      0) \right]
    \\
    = \
    & \frac{\dt}{2}\left(
      \partial_{xu}^2 f (t^n, \tilde{x}_N, \tilde{u}_{N-1}^n) \,
      \uh{N-1} + \partial_{xu}^2 f (t^n, \overline{x}_N,
      \overline{u}^n_N) \, \uh{N} + \partial_x f (t^n,
      \tilde{x}_{N},0) + \partial_x f (t^n, \overline{x}_{N}, 0)
      \right),
  \end{align*}
  where $\tilde{x}_N, \overline{x}_N \in ]x_{N-1/2}, x_{N+1/2}[$,
  $\tilde{u}^n_{N-1} \in \mathcal{I} (0,\uh{N-1})$ and
  $\overline{u}^n_{N} \in \mathcal{I} (0,\uh{N})$, we conclude
  \begin{align*}
    & \lambda \, \modulo{ F_{N+1/2}^n (\uh{N-1}, \uh{N}) - F_{N-1/2}^n
      (\uh{N-1}, \uh{N}) }
    \\
    \leq \
    & \dt \,\left(\frac12 \norma{\partial_{xu}^2 f}_{\L\infty
      (\Sigma_n)} (|\uh{N-1}| + |\uh{N}|) + \norma{\partial_x f
      (\cdot, \cdot, 0)}_{\L\infty ([0,t^n]\times[a,b])} \right).
  \end{align*}
  By the positivity of the coefficients involved
  in~\eqref{eq:3}--\eqref{eq:3a}, we obtain
  \begin{align}
    \label{eq:diffNb}
    \modulo{u_b^{n+1} - u_N^{n+1/2}} \leq \
    & |u_b^{n+1} -\uh{b}| + (1
      - \gamma_N^n) |\uh{b} - \uh{N}| + \delta_N^n |\uh{N}- \uh{N-1}|
    \\
    \nonumber
    & + \dt \, \left(\frac12 \, \norma{\partial_{xu}^2
      f}_{\L\infty (\Sigma_n)} (|\uh{N-1}| + |\uh{N}|) +
      \norma{\partial_x f (\cdot, \cdot, 0)}_{\L\infty
      ([0,t^n]\times[a,b])} \right).
  \end{align}

  Insert~\eqref{eq:6}, \eqref{eq:diff1a} and~\eqref{eq:diffNb}
  into~\eqref{eq:4}:
  \begin{align*}
    & \sum_{j=0}^N \modulo{u_{j+1}^{n+1} -u_j^{n+1}}
    \\
    \leq \
    & e^{\dt \, \norma{\partial_u g}_{\L\infty (\Sigma_n)}}
      \biggl[ \gamma_1^n |\uh{2} - \uh{1}| + (1-\delta_1^n) |\uh{1}-
      \uh{a}| + |u_a^{n+1} - \uh{a}|
    \\
    &+ \dt \, \left(\frac12 \, \norma{\partial_{xu}^2 f}_{\L\infty
      (\Sigma_n)} (|\uh{a}| + |\uh{1}|) + \norma{\partial_x f
      (\cdot, \cdot,0)}_{\L\infty ([0,t^n] \times[a,b])} \right)
    \\
    & + \sum_{j=1}^{N-1} \modulo{\uh{j+1} - \uh{j}} + \delta_1^n \,
      \modulo{\uh{1} - \uh{a}} - \delta_N^n \, \modulo{\uh{N}- \uh{N-1}}
      + \gamma^n_N \, \modulo{\uh{b} - \uh{N}} - \gamma_1^n \,
      \modulo{\uh{2} - \uh{1}}
    \\
    & + \dt \norma{\partial_{xu}^2 f}_{\L\infty (\Sigma_n)}
      \sum_{j=0}^{N-1} \modulo{\uh{j+1} - \uh{j}} + 2 \, \dt \, (b-a) \,
      \norma{\partial_{xx}^2 f}_{\L\infty (\Sigma_n)}
    \\
    & +|u_b^{n+1} - \uh{b}| + (1 - \gamma_N^n) |\uh{b} - \uh{N}| +
      \delta_N^n |\uh{N}- \uh{N-1}|
    \\
    & + \dt \, \left( \frac12 \,\norma{\partial_{xu}^2 f}_{\L\infty
      (\Sigma_n)} (|\uh{N-1}| + |\uh{N}|) + \norma{\partial_x f
      (\cdot, \cdot, 0)}_{\L\infty ([0,t^n]\times[a,b])} \right)
      \biggr]
    \\
    & + \dt \, (b-a) \norma{\partial_xg}_{\L\infty (\Sigma_n)}\!  + \!
      2 \dt \norma{g (\cdot, \cdot, 0)}_{\L\infty ([0,t^n]\times
      [a,b])}\!  + \! 2 \dt \, \norma{\partial_u g}_{\L\infty
      (\Sigma_n)} \norma{u^{n+1/2}}_{\L\infty ([a,b])}
    \\
    \leq \
    & e^{\dt \, \norma{\partial_u g}_{\L\infty (\Sigma_n)}}
      \biggl[ |u_a^{n+1} - \uh{a}| +|u_b^{n+1} - \uh{b}| + \left( 1+ \dt
      \, \norma{\partial_{xu}^2 f}_{\L\infty (\Sigma_n)} \right)
      \sum_{j=0}^{N} \modulo{\uh{j+1} - \uh{j}}
    \\
    & + \mathcal{K}_1 (t^n) \,\dt + \frac12 \,\left( 3 \, \mathcal{U}
      (t^n) + \norma{u_a}_{\L\infty ([0,t^n])}\right)
      \norma{\partial_{xu}^2 f}_{\L\infty (\Sigma_n)} \, \dt \biggr]
    \\
    & + \dt \, (b-a) \norma{\partial_xg}_{\L\infty (\Sigma_n)} + 2 \dt
      \norma{g (\cdot, \cdot, 0)}_{\L\infty ([0,t^n]\times [a,b])} + 2
      \, \dt \, \norma{\partial_u g}_{\L\infty (\Sigma_n)} \,
      \mathcal{U} (t^n),
  \end{align*}
  where
  \begin{align}
    \label{eq:K1}
    \mathcal{K}_1 (t) = \
    & 2 \left( \norma{\partial_x f (\cdot,
      \cdot, 0)}_{\L\infty ([0,t^n]\times[a,b])} + (b-a) \,
      \norma{\partial_{xx}^2 f}_{\L\infty (\Sigma_n)} \right).
  \end{align}
  Setting
  \begin{equation}
    \label{eq:K2}
    \begin{aligned}
      \mathcal{K}_2 (t) = \
      & 2 \, \mathcal{C}_1 (t) +(b-a) \left(2 \,
        \norma{\partial_{xx}^2 f}_{\L\infty (\Sigma_t)} +
        \norma{\partial_xg}_{\L\infty (\Sigma_t)}\right)
      \\
      & + \frac12 \,\left( 3 \, \mathcal{U} (t) +
        \norma{u_a}_{\L\infty ([0,t])}\right) \norma{\partial_{xu}^2
        f}_{\L\infty (\Sigma_t)} \, \dt + 2 \, \norma{\partial_u
        g}_{\L\infty (\Sigma_t)} \, \mathcal{U} (t),
    \end{aligned}
  \end{equation}
  with $\mathcal{U}(t)$ as in~\eqref{eq:Ut},
  $\Sigma_t = [0,t] \times [a,b] \times [-\mathcal{U} (t) ,
  \mathcal{U} (t)]$ and $\mathcal{C}_1 (t)$ as in~\eqref{eq:c1}, we
  deduce by a standard iterative procedure the following estimate
  \begin{displaymath}
    \sum_{j=0}^N\! \modulo{u_{j+1}^{n}  -u_j^{n}}\! \leq\!
    e^{\mathcal{C}_2 (t^n) \, t^n} \!\! \left(
      \sum_{j=0}^N \modulo{u_{j+1}^{0}  -u_j^{0}}\!
      +\! \! \sum_{m=1}^n \modulo{u_{a}^{m}  -u_a^{m-1}}\!
      +\!  \sum_{m=1}^n \modulo{u_{b}^{m}  -u_b^{m-1}}\!
      +\! t^n \, \mathcal{K}_2 (t^n)\!\right),
  \end{displaymath}
  where actually the norms appearing in $\mathcal{C}_2 (t)$
  in~\eqref{eq:c2} can now be computed on $\Sigma_t$ instead of on
  $[0,t] \times [a,b] \times \reali$.
\end{proof}

\begin{corollary}\label{cor:BVxt} {\bf ($\BV$ estimate in space and time)}
  Let~\ref{f}, \ref{g}, \ref{ID} and~\eqref{eq:CFL} hold. Then,
  % $u_\Delta$ in~\eqref{eq:udelta} defined through
  % Algorithm~\ref{alg:1} satisfies,
  for $n$ between $1$ and $N_T$, the following estimate holds
  \begin{equation}
    \label{eq:BVxt}
    \sum_{m=0}^{n-1} \sum_{j=0}^{N}
    \dt \, \modulo{u^{m}_{j+1}-u^{m}_{j}}
    +
    \sum_{m=0}^{n-1} \sum_{j=0}^{N+1}
    \dx \, \modulo{u_j^{m+1} - u_j^m}
    \leq
    \mathcal{C}_{xt} (t^n),
  \end{equation}
  where $\mathcal{C}_{xt} (t^n)$ is given by~\eqref{eq:Cxt}.
\end{corollary}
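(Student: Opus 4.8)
The plan is to handle the two sums in \eqref{eq:BVxt} separately, reducing both to the spatial $\BV$ bound already established in Proposition~\ref{prop:BV}. For the first sum there is essentially nothing to prove: by \eqref{eq:spaceBV} each inner sum $\sum_{j=0}^N \modulo{u^m_{j+1}-u^m_j}$ is bounded by $\mathcal{C}_x(t^m)$, and since $\mathcal{C}_x$ is nondecreasing in $t$ (see \eqref{eq:11}), multiplying by $\dt$ and summing over $m=0,\ldots,n-1$ gives
\[
\sum_{m=0}^{n-1}\sum_{j=0}^N \dt\,\modulo{u^m_{j+1}-u^m_j}\leq \sum_{m=0}^{n-1}\dt\,\mathcal{C}_x(t^m)\leq t^n\,\mathcal{C}_x(t^n).
\]

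The second sum is the genuine time-$\BV$ estimate and requires more care. I would split each time increment through the intermediate step \eqref{eq:scheme2}, writing $u_j^{m+1}-u_j^m=(u_j^{m+1/2}-u_j^m)+(u_j^{m+1}-u_j^{m+1/2})$. The source part is immediate: by \eqref{eq:scheme2} and Lemma~\ref{lem:linf}, $\modulo{u_j^{m+1}-u_j^{m+1/2}}=\dt\,\modulo{g(t^m,x_j,u_j^{m+1/2})}\leq \dt\,\norma{g}_{\L\infty}$ with the norm taken on $[0,t^m]\times[a,b]\times[-\mathcal{U}(t^m),\mathcal{U}(t^m)]$, so that $\sum_{j=1}^N \dx\,\modulo{u_j^{m+1}-u_j^{m+1/2}}\leq \dt\,(b-a)\,\norma{g}_{\L\infty}$.

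For the transport part I would reuse the decomposition \eqref{eq:1} from the proof of Lemma~\ref{lem:linf}, which gives
\[
u_j^{m+1/2}-u_j^m=\beta_j^m(u_{j-1}^m-u_j^m)+\gamma_j^m(u_{j+1}^m-u_j^m)-\lambda\left(F^m_{j+1/2}(u_j^m,u_j^m)-F^m_{j-1/2}(u_j^m,u_j^m)\right),
\]
together with the bound on the last, purely spatial, flux difference already computed there, namely $\lambda\,\modulo{F^m_{j+1/2}(u_j^m,u_j^m)-F^m_{j-1/2}(u_j^m,u_j^m)}\leq \dt\left(\norma{\partial_x f(\cdot,\cdot,0)}_{\L\infty}+\modulo{u_j^m}\,\norma{\partial^2_{xu}f}_{\L\infty}\right)$. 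The decisive observation is that, from \eqref{eq:betajn}--\eqref{eq:gammajn} and the CFL condition \eqref{eq:CFL} (which gives $\alpha\geq L_f$), one has the refined bound $\beta_j^m,\gamma_j^m\leq \lambda\alpha$, so that after multiplying by $\dx$ the factor $\lambda\,\dx=\dt$ turns the spatial jumps into a $\dt$-weighted spatial total variation. Summing over $j=1,\ldots,N$ and using that each jump appears at most twice, I would thus obtain $\sum_{j=1}^N \dx\,\modulo{u_j^{m+1/2}-u_j^m}\leq 2\alpha\,\dt\,\mathcal{C}_x(t^m)+\dt\,(b-a)\left(\norma{\partial_x f(\cdot,\cdot,0)}_{\L\infty}+\mathcal{U}(t^m)\,\norma{\partial^2_{xu}f}_{\L\infty}\right)$.

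It then remains to treat the two boundary indices $j=0$ and $j=N+1$, where $u_0^m=u_a^m$ and $u_{N+1}^m=u_b^m$: there $\sum_{m=0}^{n-1}\dx\left(\modulo{u_a^{m+1}-u_a^m}+\modulo{u_b^{m+1}-u_b^m}\right)\leq \dx\left(\tv(u_a;[0,t^n])+\tv(u_b;[0,t^n])\right)$, since the discrete time variation of the cell averages does not exceed the total variation of the datum. Collecting the three contributions, summing over $m$, and exploiting the monotonicity of $\mathcal{C}_x$, of $\mathcal{U}$ and of the relevant $\L\infty$-norms in $t$, produces a bound of the asserted form, which is what defines $\mathcal{C}_{xt}(t^n)$ in \eqref{eq:Cxt}. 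The main obstacle is precisely the transport part: one must check that the $\dx$ weight cancels against the $O(\lambda)$ size of $\beta_j^m$ and $\gamma_j^m$, leaving a genuine $\dt$ factor, so that the time summation reproduces a multiple of $t^n\,\mathcal{C}_x(t^n)$ rather than a quantity diverging as $\dx\to 0$.
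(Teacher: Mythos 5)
Your proposal is correct and follows essentially the same route as the paper's proof: the same splitting $u_j^{m+1}-u_j^m=(u_j^{m+1}-u_j^{m+1/2})+(u_j^{m+1/2}-u_j^m)$, the same bound $\dt\,\norma{g}_{\L\infty}$ for the operator-splitting step, the same reduction of the transport step to $\dt$ times the spatial $\BV$ bound of Proposition~\ref{prop:BV} (the paper expands the flux differences directly, obtaining the coefficient $\alpha+\norma{\partial_u f}_{\L\infty}$, whereas your reuse of the decomposition~\eqref{eq:1} with the refined bound $\beta_j^m,\gamma_j^m\leq\lambda\alpha$ from~\eqref{eq:CFL} yields the marginally larger but equivalent coefficient $2\alpha$), and the same handling of the boundary indices $j=0,\,N+1$. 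The resulting constant differs from~\eqref{eq:Cxt} only by such harmless enlargements, which is immaterial since the corollary is used solely as a uniform-in-$\dx$ bound for compactness.
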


\begin{proof}
  By Proposition~\ref{prop:BV}, we have
  \begin{equation}
    \label{eq:7}
    \sum_{m=0}^{n-1} \sum_{j=0}^{N}
    \dt \, \modulo{u^{m}_{j+1}-u^{m}_{j}}
    \leq n \, \dt \, \mathcal{C}_x (t^n).
  \end{equation}
  By the definition of the
  scheme~\eqref{eq:scheme1}--\eqref{eq:scheme2}, for
  $m \in \left\{0, \ldots, n-1\right\}$ and
  $j\in\left\{1, \ldots, N \right\}$, we have
  \begin{equation}
    \label{eq:32}
    \modulo{u_j^{m+1} - u_j^m} \leq  \modulo{u_j^{m+1} - u_j^{m+1/2}} + \modulo{u_j^{m+1/2} - u_j^m}.
  \end{equation}
  In particular, the first term on the r.h.s.~of~\eqref{eq:32} can be
  estimated as follows
  \begin{equation}
    \label{eq:33}
    \modulo{u_j^{m+1} - u_j^{m+1/2}}
    \leq
    \dt \, \norma{g}_{\L\infty ([0,t^m] \times [a,b] \times [-\mathcal{U} (t^m), \, \mathcal{U} (t^m)])},
  \end{equation}
  and the norm of $g$ appearing above is bounded thanks to~\ref{g}.
  Concerning the second term on the r.h.s.~of~\eqref{eq:32},
  by~\eqref{eq:numFl} and~\eqref{eq:scheme1}, we obtain
  \begin{align*}
    & \modulo{u_j^{m+1/2} - u_j^m}
    \\ \leq \
    & \frac{\lambda \,
      \alpha}{2} \left( \modulo{u_{j+1}^m - u_j^m} + \modulo{u_j^m -
      u_{j-1}^m} \right)
    \\
    & + \frac{\lambda}{2}\left| f (t^m, x_{j+1/2}, u_j^m) + f (t^m,
      x_{j+1/2}, u_{j+1}^m) - f (t^m, x_{j-1/2}, u_{j-1}^m) - f (t^m,
      x_{j-1/2}, u_j^m) \right|
    \\
    \leq \
    & \frac{\lambda \, \alpha}{2} \left( \modulo{u_{j+1}^m -
      u_j^m} + \modulo{u_j^m - u_{j-1}^m} \right)
    \\
    & + \frac\lambda2 \left[ \modulo{\partial_x f (t^m, \tilde{x}_{j},
      u_{j}^m)} \dx + \modulo{\partial_u f (t^m, x_{j-1/2},
      \tilde{u}_{j-1/2}^m)}\modulo{u_j^m - u_{j-1}^m} \right.
    \\
    & \left. \qquad + \modulo{\partial_x f (t^m, \overline{x}_{j},
      u_{j+1}^m)} \dx + \modulo{\partial_u f (t^m, x_{j-1/2},
      \tilde{u}_{j+1/2}^m)}\modulo{u_{j+1}^m - u_{j}^m} \right]
    \\
    \leq \
    & \frac\lambda2 \, \left(\alpha + \norma{\partial_u
      f}_{\L\infty ([0,t^m] \times [a,b] \times [-\mathcal{U} (t^m),
      \, \mathcal{U} (t^m)])}\right) \left( \modulo{u_{j+1}^m -
      u_j^m} + \modulo{u_j^m - u_{j-1}^m} \right)
    \\
    & + \dt \, \norma{\partial_x f}_{\L\infty ([0,t^m] \times [a,b]
      \times [-\mathcal{U} (t^m), \, \mathcal{U} (t^m)])},
  \end{align*}
  where $\tilde{x}_{j}, \overline{x}_j \in \, ]x_{j-1/2}, x_{j+1/2}[$,
  $\tilde{u}_{j-1/2}^m \in \mathcal{I} (u_{j-1}^m, u_j^m)$ and
  $\tilde{u}_{j+1/2}^m \in \mathcal{I} (u_{j}^m, u_{j+1}^m)$. Remark
  that, by~\ref{f}, the norm of $\partial_x f$ appearing above is
  bounded. Therefore, by Proposition~\ref{prop:BV},
  \begin{align}
    \nonumber \sum_{j=1}^N \dx \, \modulo{u_j^{m+1/2} - u_j^m} \leq \
    & \dt \, \left(\alpha + \norma{\partial_u f}_{\L\infty ([0,t^m]
      \times [a,b] \times [-\mathcal{U} (t^m), \, \mathcal{U}
      (t^m)])}\right) \sum_{j=0}^N \modulo{u_{j+1}^m - u_j^m} \\
    \nonumber
    & + \dt \, (b-a) \,\norma{\partial_x f}_{\L\infty
      ([0,t^m] \times [a,b] \times [-\mathcal{U} (t^m), \, \mathcal{U}
      (t^m)])}
    \\
    \label{eq:34} = \
    & \dt \, \mathcal{C}_t (t^m),
  \end{align}
  where we set
  \begin{equation}
    \label{eq:Ct}
    \mathcal{C}_t (\tau) =
    \left(\alpha +  \norma{\partial_u f}_{\L\infty ([0,\tau] \times [a,b] \times [-\mathcal{U} (\tau), \, \mathcal{U} (\tau)])}\right)
    \mathcal{C}_x (\tau)
    +  (b-a) \, \norma{\partial_x f}_{\L\infty ([0,\tau] \times [a,b] \times [-\mathcal{U} (\tau), \, \mathcal{U} (\tau)])}.
  \end{equation}
  Exploiting~\eqref{eq:32}, \eqref{eq:33} and~\eqref{eq:34}, we obtain
  \begin{align*}
    \sum_{j=0}^{N+1} \dx \, \modulo{u_j^{m+1} - u_j^m} = \
    & = \dx \,
      \modulo{u_a^{m+1} - u_a^m} + \dx \, \modulo{u_b^{m+1} - u_b^m} +
      \sum_{j=1}^{N} \dx \, \modulo{u_j^{m+1} - u_j^m}
    \\
    \leq \
    & \dx \, \modulo{u_a^{m+1} - u_a^m} + \dx \,
      \modulo{u_b^{m+1} - u_b^m}
    \\
    & + \dt \, \left( \mathcal{C}_t (t^m) + \norma{g}_{\L\infty
      ([0,t^m] \times [a,b] \times [-\mathcal{U} (t^m), \,
      \mathcal{U} (t^m)])} \right),
  \end{align*}
  which, summed over $m=0, \ldots, n-1$, yields
  \begin{equation}
    \label{eq:8}
    \begin{aligned}
      \sum_{m=0}^{n-1} \sum_{j=0}^{N+1} \dx \, \modulo{u_j^{m+1} -
        u_j^m} \leq \ & \dx \sum_{m=0}^{n-1} \left( \modulo{u_a^{m+1}
          - u_a^m} + \modulo{u_b^{m+1} - u_b^m} \right)
      \\
      & + n \, \dt \, \left( \mathcal{C}_t (t^n) + \norma{g}_{\L\infty
          ([0,t^n] \times [a,b] \times [-\mathcal{U} (t^n), \,
          \mathcal{U} (t^n)])} \right).
    \end{aligned}
  \end{equation}

  Summing~\eqref{eq:7} and~\eqref{eq:8} we obtain the desired
  estimate~\eqref{eq:BVxt}, with
  \begin{align}
    \nonumber \mathcal{C}_{xt} (t^n) = \
    & n \, \dt \, ( 1 + \alpha +
      \norma{\partial_u f}_{\L\infty ([0,t^n] \times [a,b] \times
      [-\mathcal{U} (t^n), \, \mathcal{U} (t^n)])}) \, \mathcal{C}_x
      (t^n)
    \\ \nonumber
    &+n \, \dt \left( (b-a) \, \norma{\partial_x
      f}_{\L\infty ([0,t^n] \times [a,b] \times [-\mathcal{U} (t^n),
      \, \mathcal{U} (t^n)])} + \norma{g}_{\L\infty ([0,t^n] \times
      [a,b] \times [-\mathcal{U} (t^n), \, \mathcal{U} (t^n)])}
      \right)
    \\ \label{eq:Cxt}
    & + \dx \sum_{m=0}^{n-1} \left(
      \modulo{u_a^{m+1} - u_a^m} + \modulo{u_b^{m+1} - u_b^m} \right),
  \end{align}
  concluding the proof.
\end{proof}

\subsection{Discrete entropy inequality}
\label{sec:Eineq}

We introduce the following notation: for $j=1, \ldots, N$,
$n = 0, \ldots, N_T-1$, $k \in \reali$,
\begin{align*}
  F_{j+1/2}^n (u,v) = \
  & \frac{1}{2} \left[ f (t^n, x_{j+1/2}, u) +
    f(t^n, x_{j+1/2}, v) \right] - \frac{\alpha}{2} \left( v - u \right),
  \\
  H^n_j (u,v,z) = \
  & v -\lambda \, \left( F_{j+1/2}^n (v,z) - F_{j-1/2}^n (u,v)\right),
  \\
  G^{n,k}_{j+1/2} (u,v) = \
  & F_{j+1/2}^n (u \wedge k,v \wedge k) - F_{j+1/2}^n (k,k),
  \\
  L^{n,k}_{j+1/2} (u,v) = \
  & F_{j+1/2}^n (k,k) - F_{j+1/2}^n (u \vee k,v \vee k).
\end{align*}
Observe that, due to the definition of the scheme,
$u_j^{n+1/2} = H^n_j (\uh{j-1}, \uh{j}, \uh{j+1})$.  Notice moreover
that the following equivalences hold true: $(s-k)^+ = s \wedge k - k$
and $(s-k)^- = k - s \vee k$.

\begin{lemma}
  \label{lem:Eineq}
  Let~\ref{f}, \ref{g}, \ref{ID} and~\eqref{eq:CFL} hold. Then the
  approximate solution $u_\Delta$ in~\eqref{eq:udelta}, defined
  through Algorithm~\ref{alg:1}, satisfies the following discrete
  entropy inequalities: for $j=1, \ldots, N$, $n = 0, \ldots, N_T-1$,
  $k \in \reali$,
  \begin{align}
    \nonumber (u_j^{n+1} - k)^+ - (\uh{j} -k)^+ + \lambda \, \left(
    G^{n,k}_{j+1/2} (\uh{j}, \uh{j+1}) - G^{n,k}_{j-1/2} (\uh{j-1},
    \uh{j}) \right)
    &
    \\
    \label{eq:Eineq+}
    + \lambda \, \sgn^+
    (u_j^{n+1} - k) \, \left( f (t^n, x_{j+1/2}, k) - f (t^n,
    x_{j-1/2}, k) \right)
    &
    \\ \nonumber
    - \dt \, \sgn^+ (u_j^{n+1}
    - k) \, g \!\left(t^n,x_{j}, u^{n+1/2}_j\right)
    &
      \leq 0,
  \end{align}
  and
  \begin{align}
    \nonumber (u_j^{n+1} - k)^- - (\uh{j} -k)^- + \lambda \, \left(
    L^{n,k}_{j+1/2} (\uh{j}, \uh{j+1}) - L^{n,k}_{j-1/2} (\uh{j-1},
    \uh{j}) \right)
    &
    \\ \label{eq:Eineq-}
    + \lambda \, \sgn^-
    (u_j^{n+1} - k) \, \left( f (t^n, x_{j+1/2}, k) - f (t^n,
    x_{j-1/2}, k) \right)
    &
    \\ \nonumber
    -\dt \, \sgn^- (u_j^{n+1} -
    k) \, g\! \left(t^n,x_{j}, u^{n+1/2}_j\right)
    & \leq 0.
  \end{align}
\end{lemma}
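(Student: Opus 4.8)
The plan is to reduce the two discrete entropy inequalities to the monotonicity of the conservative half-step, followed by a case distinction on the sign of $u_j^{n+1}-k$. I treat only \eqref{eq:Eineq+}; the inequality \eqref{eq:Eineq-} is obtained by the mirror argument, replacing $\wedge$ by $\vee$, $G^{n,k}$ by $L^{n,k}$ and $\sgn^+$ by $\sgn^-$. Throughout I abbreviate $\Psi^{n,k}_j = f(t^n,x_{j+1/2},k)-f(t^n,x_{j-1/2},k)$ and recall that $u_j^{n+1/2}=H^n_j(\uh{j-1},\uh{j},\uh{j+1})$ and $(s-k)^+=s\wedge k-k$.

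First I would record that $H^n_j$ is nondecreasing in each of its three arguments. Since $H^n_j$ is assembled from the Lax--Friedrichs fluxes $F^n_{j\pm1/2}$, the CFL condition \eqref{eq:CFL} together with $\alpha\geq L_f(T)$ makes each first-order increment nonnegative: this is exactly the computation that yields $\beta^n_j,\gamma^n_j\in[0,1/3]$ and $(1-\beta^n_j-\gamma^n_j)\in[1/3,1]$ in \eqref{eq:5}. Next, because $s\wedge k\geq s$ and $s\wedge k\geq k$, the triple $(\uh{j-1}\wedge k,\uh{j}\wedge k,\uh{j+1}\wedge k)$ dominates componentwise both $(\uh{j-1},\uh{j},\uh{j+1})$ and $(k,k,k)$, so monotonicity gives
\[
H^n_j(\uh{j-1}\wedge k,\uh{j}\wedge k,\uh{j+1}\wedge k)\ \geq\ u_j^{n+1/2}\qquad\text{and}\qquad H^n_j(\uh{j-1}\wedge k,\uh{j}\wedge k,\uh{j+1}\wedge k)\ \geq\ H^n_j(k,k,k).
\]
Inserting $\pm F^n_{j\pm1/2}(k,k)$ and using the definition of $G^{n,k}_{j\pm1/2}$ I rewrite the left-hand side as $\uh{j}\wedge k-\lambda(G^{n,k}_{j+1/2}(\uh{j},\uh{j+1})-G^{n,k}_{j-1/2}(\uh{j-1},\uh{j}))-\lambda\,\Psi^{n,k}_j$, while $H^n_j(k,k,k)=k-\lambda\,\Psi^{n,k}_j$. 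The two bounds then read, after subtracting $k$,
\[
u_j^{n+1/2}-k+\lambda\,\Psi^{n,k}_j\ \leq\ (\uh{j}-k)^+-\lambda\bigl(G^{n,k}_{j+1/2}-G^{n,k}_{j-1/2}\bigr)\qquad\text{and}\qquad \lambda\bigl(G^{n,k}_{j+1/2}-G^{n,k}_{j-1/2}\bigr)\ \leq\ (\uh{j}-k)^+.
\]

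Finally I would conclude by splitting on the sign of $u_j^{n+1}-k$, using $u_j^{n+1}=u_j^{n+1/2}+\dt\,g(t^n,x_j,u_j^{n+1/2})$. If $u_j^{n+1}\leq k$, then $(u_j^{n+1}-k)^+=0$ and every term carrying $\sgn^+(u_j^{n+1}-k)$ drops, so \eqref{eq:Eineq+} collapses to the second bound above. If instead $u_j^{n+1}>k$, then $\sgn^+(u_j^{n+1}-k)=1$ and $(u_j^{n+1}-k)^+=u_j^{n+1/2}-k+\dt\,g(t^n,x_j,u_j^{n+1/2})$; the source contribution $\dt\,g$ cancels exactly against the splitting term $-\dt\,\sgn^+(u_j^{n+1}-k)\,g(t^n,x_j,u_j^{n+1/2})$, and what remains is precisely the first bound above. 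The point that makes the statement carry $\sgn^+(u_j^{n+1}-k)$ rather than $\sgn^+(u_j^{n+1/2}-k)$ on both the spatial and the source terms is exactly this cancellation, so the case distinction must be performed on $u_j^{n+1}$ and not on $u_j^{n+1/2}$. I expect the only genuine difficulty to be the bookkeeping of $\Psi^{n,k}_j$: the explicit $x$-dependence of $f$ prevents $k$ from being a fixed point of $H^n_j$, and it is this flux increment, the discrete counterpart of $\partial_x f(t^n,x,k)$, that has to be carried through the monotonicity step and matched with the term $\lambda\,\sgn^+(u_j^{n+1}-k)\,(f(t^n,x_{j+1/2},k)-f(t^n,x_{j-1/2},k))$ appearing in \eqref{eq:Eineq+}. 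The monotonicity itself is routine once \eqref{eq:CFL} is in force.
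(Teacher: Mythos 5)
Your proposal is correct and follows essentially the same route as the paper: monotonicity of $H^n_j$ under the CFL condition~\eqref{eq:CFL}, evaluation of $H^n_j$ at the truncated triple to produce the $G^{n,k}$ terms, and careful bookkeeping of the increment $f(t^n,x_{j+1/2},k)-f(t^n,x_{j-1/2},k)$ stemming from $H^n_j(k,k,k)\neq k$. The only cosmetic difference is that the paper compresses your two bounds into the single chain $H^n_j(\uh{j-1}\wedge k,\uh{j}\wedge k,\uh{j+1}\wedge k)-H^n_j(k,k,k)\geq\left(H^n_j(\uh{j-1},\uh{j},\uh{j+1})-H^n_j(k,k,k)\right)^+$ and replaces your case distinction by the elementary inequality $(A+B)^+\geq A^+ + \sgn^+(A)\,B$ with $A=u_j^{n+1}-k$ and $B=\lambda\bigl(f(t^n,x_{j+1/2},k)-f(t^n,x_{j-1/2},k)\bigr)-\dt\,g(t^n,x_j,u_j^{n+1/2})$, which is exactly your split on the sign of $u_j^{n+1}-k$ written in one line.
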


\begin{proof}
  Consider the map $(u,v,z) \mapsto H^n_j (u,v,z)$. By the CFL
  condition~\eqref{eq:CFL}, it holds
  \begin{align*}
    \frac{\partial H^n_j}{\partial u} (u,v,z) = \
    & \frac{\lambda}{2}
      \left(
      \partial_u f (t^n, x_{j-1/2}, u) + \alpha \right) \geq 0,
    \\
    \frac{\partial H^n_j}{\partial v} (u,v,z) = \
    & 1 - \lambda \,
      \alpha - \frac{\lambda}{2} \left(
      \partial_u f (t^n, x_{j+1/2}, v) - \partial_u f (t^n, x_{j-1/2},
      v) \right) \geq 0,
    \\
    \frac{\partial H^n_j}{\partial z} (u,v,z) = \
    &\frac{\lambda}{2}
      \left( \alpha - \partial_u f (t^n, x_{j+1/2}, z) \right) \geq 0.
  \end{align*}
  Notice that
  \begin{displaymath}
    H^n_j (k,k,k) = k - \lambda \left( f (t^n, x_{j+1/2}, k) - f (t^n, x_{j-1/2}, k)\right).
  \end{displaymath}
  By the monotonicity properties obtained above, we have
  \begin{align}
    \nonumber
    & H^n_j (\uh{j-1} \wedge k, \uh{j} \wedge k, \uh{j+1}
      \wedge k) - H^n_j (k,k,k)
    \\ \nonumber \geq \
    & H^n_j (\uh{j-1},
      \uh{j}, \uh{j+1}) \wedge H^n_j (k,k,k) - H^n_j (k,k,k)
    \\
    \nonumber = \
    & \left(H^n_j (\uh{j-1}, \uh{j}, \uh{j+1}) - H^n_j
      (k,k,k)\right) ^+
    \\ \nonumber =\
    & \left( u_j^{n+1/2} - k +
      \lambda \left( f (t^n, x_{j+1/2}, k) - f (t^n, x_{j-1/2},
      k)\right) \right)^+
    \\ \label{eq:serve1} = \
    & \left(
      u_j^{n+1} - k - \dt \, g \!  \left(t^n,x_j,u_j^{n+1/2}\right) +
      \lambda \left( f (t^n, x_{j+1/2}, k) - f (t^n, x_{j-1/2},
      k)\right) \right)^+.
  \end{align}
  Moreover, we also have
  \begin{align}
    \nonumber
    & H^n_j (\uh{j-1} \wedge k, \uh{j} \wedge k, \uh{j+1}
      \wedge k) - H^n_j (k,k,k)
    \\ \nonumber = \
    & (\uh{j} \wedge k) - k
    \\ \nonumber
    & - \lambda \left[ F^n_{j+1/2} (\uh{j} \wedge k,
      \uh{j+1} \wedge k) - F^n_{j-1/2} (\uh{j-1} \wedge k, \uh{j}
      \wedge k) - F^n_{j+1/2} (k,k) + F^n_{j-1/2}(k,k) \right]
    \\ \label{eq:serve2} = \
    & (\uh{j} - k)^+ - \lambda \left(
      G^{n,k}_{j+1/2} (\uh{j}, \uh{j+1}) - G^{n,k}_{j-1/2} (\uh{j-1},
      \uh{j})\right).
  \end{align}
  Hence, by~\eqref{eq:serve1} and~\eqref{eq:serve2},
  \begin{align*}
    & (\uh{j} - k)^+ - \lambda \left( G^{n,k}_{j+1/2} (\uh{j},
      \uh{j+1}) - G^{n,k}_{j-1/2} (\uh{j-1}, \uh{j})\right)
    \\
    \geq \
    & \left( u_j^{n+1} - k - \dt \, g \!
      \left(t^n,x_j,u_j^{n+1/2}\right) + \lambda \left( f (t^n,
      x_{j+1/2}, k) - f (t^n, x_{j-1/2}, k)\right) \right)^+
    \\
    = \
    & \sgn^+\left( u_j^{n+1} - k - \dt \, g \!
      \left(t^n,x_j,u_j^{n+1/2}\right) + \lambda \left( f (t^n,
      x_{j+1/2}, k) - f (t^n, x_{j-1/2}, k)\right) \right)
    \\
    & \times \left( u_j^{n+1} - k - \dt \, g \!
      \left(t^n,x_j,u_j^{n+1/2}\right) + \lambda \left( f (t^n,
      x_{j+1/2}, k) - f (t^n, x_{j-1/2}, k)\right)\right)
    \\
    \geq \
    &\left(u_j^{n+1} - k \right) ^+ - \dt \, \sgn^+\left(
      u_j^{n+1} - k \right) \, g \! \left(t^n,x_j,u_j^{n+1/2}\right)
    \\
    & + \lambda \, \sgn^+\left( u_j^{n+1} - k \right) \left(f (t^n,
      x_{j+1/2}, k) - f (t^n, x_{j-1/2}, k) \right),
  \end{align*}
  proving~\eqref{eq:Eineq+}, while~\eqref{eq:Eineq-} is proven in an
  entirely similar way.
\end{proof}

\subsection{Convergence towards an entropy weak solution}
\label{sec:conv}

The uniform $\L\infty$-bound provided by Lemma~\ref{lem:linf} and the
total variation estimate in Corollary~\ref{cor:BVxt} allow to apply
Helly's compactness theorem, ensuring the existence of a subsequence
of $u_\Delta$, still denoted by $u_\Delta$, converging in $\L1$ to a
function $u \in \L\infty ([0,T[ \times ]a,b[)$, for all $T>0$. We need
to prove that this limit function is indeed an MV--solution to the
IBVP~\eqref{eq:original}, in the sense of Definition~\ref{def:MV}.

\begin{lemma}
  \label{lem:entropyLimit}
  Let~\ref{f}, \ref{g}, \ref{ID} and~\eqref{eq:CFL} hold. Then the
  piecewise constant approximate solutions $u_\Delta$ resulting from
  Algorithm~\ref{alg:1} converge, as $\dx \to 0$, towards an
  MV--solution of the IBVP~\eqref{eq:original}.
\end{lemma}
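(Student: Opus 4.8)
The plan is to recover the weak entropy inequality \eqref{eq:MV} by passing to the limit in the discrete entropy inequalities of Lemma~\ref{lem:Eineq}. Lemma~\ref{lem:linf} and Corollary~\ref{cor:BVxt} have already supplied, via Helly's theorem, a subsequence $u_\Delta \to u$ in $\L1$ with $u \in (\L\infty\cap\BV)([0,T[\times]a,b[)$, so the only task is to identify $u$ as an MV--solution. Fix $\phi\in\Cc1(]-\infty,T[\times\reali;\R^+)$ and $k\in\reali$, set $\phi_j^n=\phi(t^n,x_j)$, multiply \eqref{eq:Eineq+} by $\dx\,\phi_j^n\geq0$, and sum over $j=1,\dots,N$ and $n=0,\dots,N_T-1$; the sign of $\phi$ preserves the inequality. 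I would then perform a discrete summation by parts (Abel's summation) separately in the time index and in the space index.

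Summation by parts in $n$ on the first two terms yields a Riemann sum converging to $-\iint (u-k)^+\,\partial_t\phi\,\d{x}\d{t}$ together with the initial layer $-\int_a^b (u_o-k)^+\,\phi(0,\cdot)\,\d{x}$, using $u_j^0\to u_o$. Summation by parts in $j$ on the flux term $\lambda\,(G^{n,k}_{j+1/2}-G^{n,k}_{j-1/2})$ transfers the increment onto $\phi$, leaving an interior sum which, by the consistency identity $G^{n,k}_{j+1/2}(v,v)=\sgnp(v-k)\,[f(t^n,x_{j+1/2},v)-f(t^n,x_{j+1/2},k)]$, converges to $-\iint \sgnp(u-k)\,[f(t,x,u)-f(t,x,k)]\,\partial_x\phi$. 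The discrepancy between $G^{n,k}_{j+1/2}(\uh{j},\uh{j+1})$ and its consistent diagonal value is Lipschitz-bounded by $\modulo{\uh{j+1}-\uh{j}}$ and weighted by $\phi_{j+1}^n-\phi_j^n=\mathcal{O}(\dx)$, so a Lax--Wendroff type argument makes it vanish thanks to the $\BV$ bound of Corollary~\ref{cor:BVxt}. The remaining interior contributions are routine: $\lambda\,(f(t^n,x_{j+1/2},k)-f(t^n,x_{j-1/2},k))\approx\dt\,\partial_x f(t^n,x_j,k)$ produces the $-\sgnp(u-k)\,\partial_x f(t,x,k)\,\phi$ term, while the source term converges to $\sgnp(u-k)\,g(t,x,u)\,\phi$, using that $\uh{j}$, $u_j^{n+1/2}$ and $u_j^{n+1}$ share the same $\L1$ limit and the continuity of $g$.

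The delicate point, and the main obstacle, is the treatment of the two boundary terms left by the spatial summation by parts, which after transposing the discrete inequality ($\leq0$) to its $\geq0$ form read $\sum_n\dt\,\phi_1^n\,G^{n,k}_{1/2}(u_a^n,\uh{1})$ at $x=a$ and $-\sum_n\dt\,\phi_N^n\,G^{n,k}_{N+1/2}(\uh{N},u_b^n)$ at $x=b$. Here the trace of $u$ cannot be identified, so the aim is instead to dominate these contributions by the boundary data alone. I would estimate $G^{n,k}_{1/2}(u_a^n,\uh{1})$ by splitting it through $F^n_{1/2}(k,\cdot)$ and invoking the monotonicity of the Lax--Friedrichs flux together with the CFL condition $\alpha\geq L_f(T)$: the bracket measuring the deviation of $u_a^n$ from $k$ is bounded above by $L_f(T)\,(u_a^n-k)^+$, while the bracket carrying the interior value $\uh{1}$ turns out to be nonpositive and can simply be discarded. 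The symmetric computation at $x=b$ bounds $-G^{n,k}_{N+1/2}(\uh{N},u_b^n)$ by $L_f(T)\,(u_b^n-k)^+$. Summing in $n$, these estimates reproduce exactly the weighted boundary integrals $L_f(T)\bigl(\int_0^T(u_a-k)^+\phi(t,a)\,\d{t}+\int_0^T(u_b-k)^+\phi(t,b)\,\d{t}\bigr)$ of \eqref{eq:MV}. Collecting all limits and accounting for the overall sign change then yields the $+$ form of \eqref{eq:MV}; repeating the argument verbatim from \eqref{eq:Eineq-}, with $L^{n,k}_{j+1/2}$, $(\cdot)^-$ and $\sgnm$ replacing their $+$ counterparts, gives the $-$ form, which completes the proof.
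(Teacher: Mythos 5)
Your architecture is the paper's: multiply the discrete inequalities of Lemma~\ref{lem:Eineq} by $\dx\,\phi(t^n,x_j)$, sum, perform a discrete summation by parts in $n$ and $j$ (the paper packages the spatial one by adding and subtracting the diagonal values $G^{n,k}_{j+1/2}(\uh{j},\uh{j})$ and comparing the resulting terms $T^{int},T^b$ with the auxiliary quantity $S$ of~\eqref{eq:S}, but this is Abel summation in disguise), remove the off-diagonal discrepancy via the Lipschitz bound $\modulo{G^{n,k}_{j+1/2}(\uh{j},\uh{j+1})-G^{n,k}_{j+1/2}(\uh{j},\uh{j})}\leq \alpha\,\modulo{\uh{j+1}-\uh{j}}$ together with the $\BV$ estimate, and control the boundary sums by the monotonicity of $F^n_{j+1/2}$ under the CFL condition~\eqref{eq:CFL}. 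Your observation that the bracket carrying the interior values $\uh{1}$, $\uh{N}$ is nonpositive and can be discarded is exactly the paper's pair of bounds $G^{n,k}_{j+1/2}(u,v)\leq\alpha\,(u-k)^+$ and $G^{n,k}_{j+1/2}(u,v)\geq-\alpha\,(v-k)^+$.

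There is, however, one concrete step that fails: your claim that the data bracket at $x=a$ is bounded by $L_f(T)\,(u_a^n-k)^+$. Splitting through $F^n_{1/2}(k,\cdot)$ and using the paper's convention $s\wedge k-k=(s-k)^+$, the data bracket is
\begin{displaymath}
  F^n_{1/2}(u_a^n\wedge k,\,k)-F^n_{1/2}(k,k)
  =\frac12\left[f(t^n,x_{1/2},u_a^n\wedge k)-f(t^n,x_{1/2},k)\right]
  +\frac{\alpha}{2}\,(u_a^n-k)^+,
\end{displaymath}
so the best available constant is $\frac{L_f(T)+\alpha}{2}\leq\alpha$: the artificial viscosity contributes $\frac{\alpha}{2}(u_a^n-k)^+$ and cannot be absorbed into the Lipschitz constant of $f$. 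Since~\eqref{eq:CFL} forces $\alpha\geq\max\{1,L_f(T)\}$, your bound is false in general; for instance, with $f\equiv0$ one has $L_f(T)=0$, yet $G^{n,k}_{1/2}(u_a^n,\uh{1})=\frac{\alpha}{2}\left[(u_a^n-k)^+-(\uh{1}-k)^+\right]$, which equals $\frac{\alpha}{2}(u_a^n-k)^+>0$ whenever $\uh{1}\leq k<u_a^n$. The same defect occurs in your bound at $x=b$. The correct output of the scheme is therefore the entropy inequality with boundary integrals weighted by $\alpha$ — this is precisely what the paper obtains in the limit of $S$ in~\eqref{eq:S} — and not by $L_f(T)$ as written in~\eqref{eq:MV}; the discrepancy is harmless for the lemma (take $\alpha$ minimal, and recall that for the $\BV$ limit the notion is insensitive by the equivalence with Definition~\ref{def:solBLN}), but your derivation of exactly $L_f(T)$ as it stands does not go through and should be repaired by carrying the weight $\alpha$ throughout the boundary estimate. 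Apart from this constant, your proof coincides in substance with the paper's.
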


\begin{proof}
  We consider the discrete entropy inequality~\eqref{eq:Eineq+}, for
  the positive semi-entropy, and we follow~\cite{DFG}, see
  also~\cite{Vovelle2002}. The proof for the negative semi-entropy is
  done analogously. Add and subtract
  $G^{n,k}_{j+1/2} (\uh{j}, \uh{j})$ in~\eqref{eq:Eineq+} and
  rearrange it as follows
  \begin{align*}
    0 \geq \
    & (u_j^{n+1} - k)^+ - (\uh{j} -k)^+ + \lambda \, \left(
      G^{n,k}_{j+1/2} (\uh{j}, \uh{j+1}) - G^{n,k}_{j+1/2} (\uh{j},
      \uh{j})\right)
    \\
    &+ \lambda \left( G^{n,k}_{j+1/2} (\uh{j}, \uh{j}) -
      G^{n,k}_{j-1/2} (\uh{j-1}, \uh{j}) \right) - \dt \, \sgn^+
      (u_j^{n+1} - k) \, g \!\left(t^n,x_{j}, u^{n+1/2}_j\right)
    \\
    & + \lambda \, \sgn^+ (u_j^{n+1} - k) \, \left( f (t^n, x_{j+1/2},
      k, \uh{j+1/2}) - f (t^n, x_{j-1/2}, k, \uh{j-1/2}) \right).
  \end{align*}
  Let $\phi \in \Cc1 ([0,T[ \times [a,b]; \reali^+)$ for some $T > 0$,
  multiply the inequality above by $\dx \, \phi (t^n, x_j)$ and sum
  over $j=1, \ldots, N$ and $n=0, \ldots, N_T-1$, so to get
  \begin{align}
    \label{eq:9}
    0 \geq \
    & \dx \sum_{n=0}^{N_T-1} \sum_{j=1}^N \left[ (u_j^{n+1} -
      k )^+ - (\uh{j} - k)^+ \right] \, \phi (t^n, x_j)
    \\
    \label{eq:9a}
    & + \dt \sum_{n=0}^{N_T-1} \sum_{j=1}^N \left[
      \left(G^{n,k}_{j+1/2} (\uh{j}, \uh{j+1}) - G^{n,k}_{j+1/2}
      (\uh{j}, \uh{j})\right) \right.
    \\
    \label{eq:9b}
    & \qquad \left.  - \left(G^{n,k}_{j-1/2} (\uh{j-1}, \uh{j}) -
      G^{n,k}_{j+1/2} (\uh{j}, \uh{j}) \right) \right] \, \phi (t^n,
      x_j) \\ \label{eq:9d}
    & - \dx \, \dt \sum_{n=0}^{N_T-1}
      \sum_{j=1}^N \sgn^+ (u_j^{n+1} - k) \, g \!\left(t^n,x_{j},
      u^{n+1/2}_j\right) \, \phi (t^n, x_j)
    \\
    \label{eq:9c}
    & + \dt \!\sum_{n=0}^{N_T-1} \!\sum_{j=1}^N \sgn^+ (u_j^{n+1} - k)
      \!\left( f (t^n, x_{j+1/2}, k, \uh{j+1/2})\! - \!f (t^n,
      x_{j-1/2}, k, \uh{j-1/2}) \right) \phi (t^n, x_j).
  \end{align}
  Consider each term separately. Summing by parts, we obtain
  \begin{align*}
    [\eqref{eq:9}] = \
    & - \dx \sum_{j=1}^N (u_j^0 - k)^+ \, \phi
      (0,x_j) - \dx \, \dt \sum_{n=1}^{N_T-1} \sum_{j=1}^N (\uh{j} -
      k)^+ \frac{\phi (t^n, x_j) - \phi (t^{n-1}, x_j)}{\dt}
    \\
    \underset{\dx \to 0^+}{\longrightarrow}
    & - \int_a^b (u_o (x) -
      k)^+ \, \phi (0,x) \d{x} - \int_0^{T} \int_a^b (u (t,x) - k)^+
      \, \partial_t\phi (t,x) \d{x}\d{t},
    \\[4pt]
    [\eqref{eq:9d}] = \
    & - \dx \, \dt \sum_{n=0}^{N_T-1} \sum_{j=1}^N
      \sgn^+ (u_j^{n+1} - k) \, g \!\left(t^n,x_{j}, u^{n+1/2}_j\right)
      \, \phi (t^n, x_j)
    \\
    \underset{\dx \to 0^+}{\longrightarrow}
    & - \int_0^{T} \int_a^b
      \sgn^+ (u (t,x) - k) \, g \left(t,x,u (t,x)\right) \, \phi (t,x)
      \d{x}\d{t},
    \\[4pt]
    [\eqref{eq:9c}] = \
    & \dx \, \dt \sum_{n=0}^{N_T-1} \sum_{j=1}^N
      \sgn^+ (u_j^{n+1} - k) \, \frac{f (t^n, x_{j+1/2}, k) - f (t^n,
      x_{j-1/2}, k)}{\dx} \, \phi (t^n, x_j)
    \\
    \underset{\dx \to 0^+}{\longrightarrow}
    & \int_0^{T} \int_a^b
      \sgn^+ (u (t,x) - k) \,
      \partial_x f (t, x, k) \, \phi (t,x) \d{x} \d{t},
  \end{align*}
  by the Dominated Convergence
  Theorem. Concerning~\eqref{eq:9a}--\eqref{eq:9b}, we get
  \begin{align*}
    [\eqref{eq:9a}-\eqref{eq:9b}] = \
    & \dt \sum_{n=0}^{N_T-1}
      \sum_{j=1}^N \left(G^{n,k}_{j+1/2} (\uh{j}, \uh{j+1}) -
      G^{n,k}_{j+1/2} (\uh{j}, \uh{j})\right) \, \phi (t^n, x_j)
    \\
    & - \dt \sum_{n=0}^{N_T-1} \sum_{j=0}^{N-1} \left(G^{n,k}_{j+1/2}
      (\uh{j}, \uh{j+1}) - G^{n,k}_{j+3/2} (\uh{j+1}, \uh{j+1})
      \right) \, \phi (t^n, x_{j+1})
    \\
    = \
    & \dt \sum_{n=0}^{N_T-1} \sum_{j=1}^{N-1} \left[
      \left(G^{n,k}_{j+1/2} (\uh{j}, \uh{j+1}) - G^{n,k}_{j+1/2}
      (\uh{j}, \uh{j})\right) \, \phi (t^n, x_j) \right.
    \\
    & \qquad \left.  - \left(G^{n,k}_{j+1/2} (\uh{j}, \uh{j+1}) -
      G^{n,k}_{j+3/2} (\uh{j+1}, \uh{j+1}) \right) \, \phi (t^n,
      x_{j+1}) \right]
    \\
    & + \dt \sum_{n=0}^{N_T-1} \left[ \left(G^{n,k}_{N+1/2} (\uh{N},
      \uh{b}) - G^{n,k}_{N+1/2} (\uh{N}, \uh{N})\right) \, \phi
      (t^n, x_N) \right.
    \\
    & \qquad \left.  - \left(G^{n,k}_{1/2} (\uh{a}, \uh{1}) -
      G^{n,k}_{3/2} (\uh{1}, \uh{1}) \right) \, \phi (t^n, x_{1})
      \right]
    \\
    = \
    & T^{int} + T^b = T,
  \end{align*}
  where we set
  \begin{align*}
    T^{int} = \
    & \dt \sum_{n=0}^{N_T-1} \sum_{j=1}^{N-1} \left[
      \left(G^{n,k}_{j+1/2} (\uh{j}, \uh{j+1}) - G^{n,k}_{j+1/2}
      (\uh{j}, \uh{j})\right) \, \phi (t^n, x_j) \right.
    \\
    & \qquad \left.  - \left(G^{n,k}_{j+1/2} (\uh{j}, \uh{j+1}) -
      G^{n,k}_{j+3/2} (\uh{j+1}, \uh{j+1}) \right) \, \phi (t^n,
      x_{j+1}) \right],
    \\
    T^b = \
    & \dt \sum_{n=0}^{N_T-1} \left[ \left(G^{n,k}_{N+1/2}
      (\uh{N}, \uh{b}) - G^{n,k}_{N+1/2} (\uh{N}, \uh{N})\right) \,
      \phi (t^n, x_N) \right.
    \\
    & \qquad \left.  - \left(G^{n,k}_{1/2} (\uh{a}, \uh{1}) -
      G^{n,k}_{3/2} (\uh{1}, \uh{1}) \right) \, \phi (t^n, x_{1})
      \right].
  \end{align*}
  Define
  \begin{equation}
    \label{eq:S}
    \begin{aligned}
      S = \
      & - \dx \, \dt \sum_{n=0}^{N_T-1} \sum_{j=1}^N
      G^{n,k}_{j+1/2} (\uh{j}, \uh{j}) \, \frac{\phi (t^n,x_{j+1}) -
        \phi (t^n,x_j)}{\dx}
      \\
      &- \alpha \, \dt \sum_{n=0}^{N_T-1} \left( (\uh{a} -k)^+ \, \phi
        (t^n,a) + (\uh{b}-k)^+ \, \phi (t^n,b) \right).
    \end{aligned}
  \end{equation}
  Observe that
  \begin{align*}
    G^{n,k}_{j+1/2} (\uh{j}, \uh{j}) = \
    & F^n_{j+1/2} (\uh{j} \wedge
      k, \uh{j} \wedge k) - F^n_{j+1/2} (k,k)
    \\
    = \
    & f (t^n, x_{j+1/2}, \uh{j}\wedge k) - f (t^n, x_{j+1/2}, k)
    \\
    = \
    & \sgn^+ (\uh{j} - k ) \left(f (t^n, x_{j+1/2}, \uh{j}) - f
      (t^n, x_{j+1/2}, k) \right).
  \end{align*}
  It follows then easily that
  \begin{align*}
    S \underset{\dx \to 0^+}{\longrightarrow} \
    & - \int_0^{T}\int_a^b
      \sgn^+ (u (t,x) -k)\, \left( f (t, x, u - f (t, x, k) \right)
      \, \partial_x \phi (t,x) \d{x}\d{t}
    \\
    & - \alpha \left( \int_0^{T} (u_a (t) -k)^+ \, \phi (t,a) \d{t} +
      \int_0^{T} (u_b (t) -k)^+ \, \phi (t,b) \d{t}\right).
  \end{align*}
  Let us rewrite $S$ in~\eqref{eq:S} as follows:
  \begin{align*}
    S = \
    & - \dt \sum_{n=0}^{N_T-1} \sum_{j=1}^N G^{n,k}_{j+1/2}
      (\uh{j}, \uh{j}) \left(\phi (t^n,x_{j+1}) - \phi (t^n,x_j\right)
    \\
    &- \alpha \, \dt \sum_{n=0}^{N_T-1} \left( (\uh{a} -k)^+ \, \phi
      (t^n,a) + (\uh{b}-k)^+ \, \phi (t^n,b) \right)
    \\
    = \
    & \dt \sum_{n=0}^{N_T-1} \left( \sum_{j=1}^N
      G^{n,k}_{j+1/2}(\uh{j}, \uh{j}) \, \phi (t^n, x_{j+1}) -
      \sum_{j=0}^{N-1} G^{n,k}_{j+3/2} (\uh{j+1}, \uh{j+1}) \, \phi
      (t^n, x_{j+1}) \right)
    \\
    &- \alpha \, \dt \sum_{n=0}^{N_T-1} \left( (\uh{a} -k)^+ \, \phi
      (t^n,a) + (\uh{b}-k)^+ \, \phi (t^n,b) \right)
    \\
    = \
    & -\dt \sum_{n=0}^{N_T-1} \sum_{j=1}^{N-1}\left(
      G^{n,k}_{j+1/2}(\uh{j}, \uh{j}) - G^{n,k}_{j+3/2} (\uh{j+1},
      \uh{j+1}) \right) \, \phi (t^n, x_{j+1})
    \\
    & - \dt \sum_{n=0}^{N_T-1} \left( G^{n,k}_{N+1/2}(\uh{N}, \uh{N})
      \, \phi (t^n, x_{N+1}) - G^{n,k}_{3/2} (\uh{1}, \uh{1}) ) \,
      \phi (t^n, x_{1}) \right)
    \\
    &- \alpha \, \dt \sum_{n=0}^{N_T-1} \left( (\uh{a} -k)^+ \, \phi
      (t^n,a) + (\uh{b}-k)^+ \, \phi (t^n,b) \right)
    \\
    =\
    & S^{int} + S^b,
  \end{align*}
  where we set
  \begin{align*}
    S^{int} = \
    & -\dt \sum_{n=0}^{N_T-1} \sum_{j=1}^{N-1}\left(
      G^{n,k}_{j+1/2}(\uh{j}, \uh{j}) - G^{n,k}_{j+3/2} (\uh{j+1},
      \uh{j+1}) \right) \, \phi (t^n, x_{j+1}),
    \\
    S^b = \
    & - \dt \sum_{n=0}^{N_T-1} \left( G^{n,k}_{N+1/2}(\uh{N},
      \uh{N}) \, \phi (t^n, x_{N+1}) - G^{n,k}_{3/2} (\uh{1}, \uh{1})
      ) \, \phi (t^n, x_{1}) \right)
    \\
    &- \alpha \, \dt \sum_{n=0}^{N_T-1} \left( (\uh{a} -k)^+ \, \phi
      (t^n,a) + (\uh{b}-k)^+ \, \phi (t^n,b) \right).
  \end{align*}
  Focus on $S^{int}$: by adding and subtracting
  $G^{n,k}_{j+1/2} (\uh{j}, \uh{j})$ in the first brackets, we can
  rewrite this term as
  \begin{align*}
    S^{int} = \
    & -\dt \sum_{n=0}^{N_T-1} \sum_{j=1}^{N-1}\left(
      G^{n,k}_{j+1/2}(\uh{j}, \uh{j+1}) - G^{n,k}_{j+1/2} (\uh{j},
      \uh{j}) \right) \, \phi (t^n, x_{j+1})
    \\
    & -\dt \sum_{n=0}^{N_T-1} \sum_{j=1}^{N-1}\left(
      G^{n,k}_{j+1/2}(\uh{j}, \uh{j+1}) - G^{n,k}_{j+3/2} (\uh{j+1},
      \uh{j+1}) \right) \, \phi (t^n, x_{j+1}).
  \end{align*}
  We evaluate now the distance between $T^{int}$ and $S^{int}$:
  \begin{displaymath}
    \modulo{T^{int} - S^{int}} \leq
    \dt \sum_{n=0}^{N_T-1} \sum_{j=1}^{N-1}
    \modulo{ G^{n,k}_{j+1/2}(\uh{j}, \uh{j+1})
      - G^{n,k}_{j+1/2}  (\uh{j}, \uh{j})}\,
    \modulo{\phi (t^n, x_{j+1}) - \phi (t^n, x_j)}.
  \end{displaymath}
  Observe that
  \begin{align*}
    & \modulo{ G^{n,k}_{j+1/2}(\uh{j}, \uh{j+1}) - G^{n,k}_{j+1/2}
      (\uh{j}, \uh{j})}
    \\
    = \
    & \modulo{ F^n_{j+1/2}(\uh{j} \wedge k, \uh{j+1} \wedge k) -
      F^n_{j+1/2} (\uh{j} \wedge k, \uh{j}) \wedge k}
    \\
    =\
    & \frac12 \left|f (t^n, x_{j+1/2},\uh{j}\wedge k) + f (t^n,
      x_{j+1/2}, \uh{j+1} \wedge k)\right.
    \\
    & \qquad \left.  - 2 \, f (t^n, x_{j+1/2}, \uh{j} \wedge k) -
      \alpha \, ( \uh{j+1} \wedge k - \uh{j} \wedge k)\right|
    \\
    = \
    & \frac12 \left|f (t^n, x_{j+1/2},\uh{j+1}\wedge k) - f (t^n,
      x_{j+1/2}, \uh{j} \wedge k)\right.
    \\
    & \qquad\qquad \left. - \alpha \, ( \uh{j+1} \wedge k - \uh{j}
      \wedge k)\right|
    \\
    \leq \
    & \frac12\, (L_f (t^n) + \alpha) \, \modulo{ \uh{j+1}
      \wedge k - \uh{j} \wedge k}
    \\
    \leq \
    & \alpha \, \modulo{ \uh{j+1} - \uh{j}}.
  \end{align*}
  Therefore,
  \begin{align}
    \nonumber \modulo{T^{int} - S^{int}} \leq \
    & \alpha \, \dx \, \dt
      \, \norma{\partial_x \phi}_{\L\infty} \sum_{n=0}^{N_T-1}
      \sum_{j=1}^{N-1} \modulo{ \uh{j+1} - \uh{j}}
    \\
    \label{eq:int}
    \leq \
    & \alpha \, \dx \, T \, \norma{\partial_x \phi}_{\L\infty}
      \max_{0 \leq n \leq N_T-1} \tv (u_\Delta (t^n)) = \mathcal{O} (\dx),
  \end{align}
  thanks to the uniform BV estimate~\eqref{eq:spaceBV}. Pass now to
  the terms $T^b$ and $S^b$:
  \begin{align}
    \nonumber S^b - T^b = \
    & - \dt \sum_{n=0}^{N_T-1} \left(
      G^{n,k}_{N+1/2}(\uh{N}, \uh{N}) \, \phi (t^n, x_{N+1}) -
      G^{n,k}_{3/2} (\uh{1}, \uh{1}) ) \, \phi (t^n, x_{1}) \right)
    \\
    \nonumber
    &- \alpha \, \dt \sum_{n=0}^{N_T-1} \left( (\uh{a} -k)^+
      \, \phi (t^n,a) + (\uh{b}-k)^+ \, \phi (t^n,b) \right)
    \\
    \nonumber
    &- \dt \sum_{n=0}^{N_T-1} \left(G^{n,k}_{N+1/2} (\uh{N},
      \uh{b}) - G^{n,k}_{N+1/2} (\uh{N}, \uh{N})\right) \, \phi (t^n,
      x_N)
    \\ \nonumber
    & + \dt \sum_{n=0}^{N_T-1} \left(G^{n,k}_{1/2}
      (\uh{a}, \uh{1}) - G^{n,k}_{3/2} (\uh{1}, \uh{1}) \right) \,
      \phi (t^n, x_{1})
    \\ \label{eq:10} = \
    & \dt \sum_{n=0}^{N_T-1}
      \left( G^{n,k}_{1/2} (\uh{a}, \uh{1}) \, \phi (t^n, x_{1}) -
      \alpha \, (\uh{a} -k)^+ \, \phi (t^n,a) \right)
    \\ \label{eq:10b}
    & - \dt \sum_{n=0}^{N_T-1} \left( \alpha \,
      (\uh{b}-k)^+ \, \phi (t^n,b) + G^{n,k}_{N+1/2} (\uh{N}, \uh{b})
      \, \phi (t^n, x_N)\right)
    \\
    \label{eq:10c}
    & - \dt \sum_{n=0}^{N_T-1}G^{n,k}_{N+1/2}(\uh{N}, \uh{N})
      \left(\phi (t^n, x_{N+1})- \phi (t^n, x_N)\right).
  \end{align}
  Observe that
  \begin{align*}
    \frac{\partial F^n_{j+1/2}}{\partial u} (u,v) = \
    & \frac12 \,
      \left(
      \partial_u f (t^n, x_{j+1/2}, u) + \alpha \right) \geq \frac12
      \, (-L_f (t^n) + \alpha) \geq 0,
    \\
    \frac{\partial F^n_{j+1/2}}{\partial v} (u,v) = \
    & \frac12 \,
      \left(
      \partial_u f (t^n, x_{j+1/2}, v) - \alpha \right) \leq \frac12
      \, (L_f (t^n) -\alpha) \leq 0,
  \end{align*}
  meaning that the numerical flux is increasing with respect to the
  first variable and decreasing with respect to the second one. Thus,
  \begin{align*}
    G^{n,k}_{j+1/2} (u,v) = \
    & F^n_{j+1/2} (u \wedge k, v \wedge k) -
      F^n_{j+1/2} (k,k)
    \\
    \geq \
    & F^n_{j+1/2} (k, v \wedge k) - F^n_{j+1/2} (k,k)
    \\
    = \
    & \frac12 \, \left(f (t^n, x_{j+1/2}, v \wedge k) - f (t^n,
      x_{j+1/2},k) -\alpha (v \wedge k - k) \right)
    \\
    \geq \
    & - \frac {L_f (t^n) +\alpha}{2} \, \modulo{v \wedge k - k}
    \\
    \geq \
    & - \alpha \, (v-k)^+
  \end{align*}
  and
  \begin{align*}
    G^{n,k}_{j+1/2} (u,v) = \
    & F^n_{j+1/2} (u \wedge k, v \wedge k) -
      F^n_{j+1/2} (k,k)
    \\
    \leq \
    & F^n_{j+1/2} (u \wedge k, k) - F^n_{j+1/2} (k,k)
    \\
    = \
    & \frac12 \, \left(f (t^n, x_{j+1/2}, u \wedge k)
      - f (t^n, x_{j+1/2},k) -\alpha (k - u \wedge k ) \right)
    \\
    \leq \
    & \frac {L_f (t^n) +\alpha}{2} \, \modulo{u \wedge k - k}
    \\
    \leq \
    & \alpha \, (u-k)^+.
  \end{align*}
  Hence,
  \begin{align*}
    [\eqref{eq:10}] = \
    & \dt \sum_{n=0}^{N_T-1} G^{n,k}_{1/2}
      (\uh{a}, \uh{1}) \left(\phi (t^n, x_{1}) - \phi (t^n,a)\right)
    \\
    &+ \dt \sum_{n=0}^{N_T-1} \left( G^{n,k}_{1/2} (\uh{a}, \uh{1}) -
      \alpha \, (\uh{a} -k)^+ \right) \phi (t^n,a)
    \\
    \leq \
    & \alpha \, T \, \dx \, \norma{\partial_x
      \phi}_{\L\infty}\!  \sup_{ n \in [0,N_T-1]}(\uh{a} - k)^+\! +
      \alpha \, \dt \sum_{n=0}^{N_T-1} \left( (\uh{a} - k)^+ \!- (\uh{a}
      - k)^+\!\right) \phi (t^n,a)
    \\
    \leq \
    & \alpha \, T \, \dx \, \norma{\partial_x \phi}_{\L\infty}
      \norma{u_a}_{\L\infty ([0,T])} = \mathcal{O} (\dx),
    \\
    [\eqref{eq:10b}] = \
    & - \dt \sum_{n=0}^{N_T-1} \left( \alpha \,
      (\uh{b} -k)^+ + G^{n,k}_{N+1/2} (\uh{N}, \uh{b})\right) \phi
      (t^n,b)
    \\
    & - \dt \sum_{n=0}^{N_T-1} G^{n,k}_{N+1/2} (\uh{N}, \uh{b})
      \left(\phi (t^n, x_{N}) - \phi (t^n,b)\right)
    \\
    \leq \
    & \!- \alpha \, \dt\! \sum_{n=0}^{N_T-1}\! \left( (\uh{b} -
      k)^+ \!- (\uh{b} - k)^+\!\right) \phi (t^n,b) + \alpha \, T \,
      \dx \, \norma{\partial_x \phi}_{\L\infty}\!\!\! \sup_{n \in
      [0,N_T-1]}(\uh{b} - k)^+
    \\
    \leq \
    & \alpha \, T \, \dx \, \norma{\partial_x \phi}_{\L\infty}
      \norma{u_b}_{\L\infty ([0,T])} = \mathcal{O} (\dx),
    \\
    [\eqref{eq:10c}] = \
    & \dt \modulo{
      \sum_{n=0}^{N_T-1}G^{n,k}_{N+1/2}(\uh{N}, \uh{N}) \left(\phi
      (t^n, x_{N+1})- \phi (t^n, x_N)\right) }
    \\
    \leq \
    & \dt \, \dx \, \norma{\partial_x \phi}_{\L\infty}
      \sum_{n=0}^{N_T-1} \modulo{G^{n,k}_{N+1/2}(\uh{N}, \uh{N}) }
    \\
    = \
    &\dt \, \dx \, \norma{\partial_x \phi}_{\L\infty}
      \sum_{n=0}^{N_T-1} \modulo{F^n_{N+1/2} (\uh{N} \wedge k, \uh{N}
      \wedge k) - F^N_{N+1/2} (k,k)}
    \\
    = \
    & \dt \, \dx \, \norma{\partial_x \phi}_{\L\infty}
      \sum_{n=0}^{N_T-1} \modulo{f (t^n, x_{N+1/2}, \uh{N} \wedge k) - f
      (t^n, x_{N+1/2}, k)}
    \\
    \leq  \
    & L_f (T) \, \dt \, \dx \, \norma{\partial_x \phi}_{\L\infty}
      \sum_{n=0}^{N_T-1} \modulo{\uh{N} \wedge k - k}
    \\
    = \
    & L_f (T) \, \dt \, \dx \, \norma{\partial_x \phi}_{\L\infty}
      \sum_{n=0}^{N_T-1} (\uh{N} - k)^+
    \\
    \leq \
    & L_f (T) \, T \, \dx \, \norma{\partial_x \phi}_{\L\infty}
      \sup_{0\leq n \leq N_T-1} \norma{u^n }_{\L\infty} = \mathcal{O}
      (\dx),
  \end{align*}
  thanks to the uniform $\L\infty$ estimate~\eqref{eq:linf}.  Hence,
  $S^b - T^b \leq \mathcal{O} (\dx)$, so that we finally get
  \begin{align*}
    0 \geq \
    & [\eqref{eq:9}\ldots\eqref{eq:9c}]
    \\
    = \
    & [\eqref{eq:9}] + [\eqref{eq:9c}] + T \pm S
    \\
    \geq \
    & [\eqref{eq:9}] + [\eqref{eq:9c}] + S - \mathcal{O} (\dx),
  \end{align*}
  concluding the proof.
\end{proof}

\subsection{Uniqueness}
\label{sec:uniq}

The uniqueness of the solution to the IBVP~\eqref{eq:original} follows
from the Lipschitz continuous dependence of the solution on initial
and boundary data, proved for the multidimensional case
in~\cite[Theorem~4.3]{ColomboRossi2015}.

\begin{proposition}{\bf{(Lipschitz continuous dependence on initial
      and boundary data)}}
  \label{prop:lipdata}
  Let~\ref{f} and~\ref{g} hold. Let $(u_o, \, u_a, \, u_b)$ and
  $(v_o, \, v_a, \, v_b)$ satisfy~\ref{ID}. Call $u$ and $v$ the
  corresponding solutions to the IBVP~\eqref{eq:original}. Then, for
  all $t>0$, the following estimate holds
  \begin{displaymath}
    \norma{u (t) \!- v(t)}_{\L1 (]a,b[)}\!
    \leq
    \!e^{L_g (t) \, t}
    \!\left[ \norma{u_o \!- v_o}_{\L1 (]a,b[)}\!
      + L_f (t) \!\left(\norma{u_a \!- v_a}_{\L1 ([0,t])}\!
        + \norma{u_b\! - v_b}_{\L1 ([0,t])} \right)\!\right],
  \end{displaymath}
  where $L_f (t)$ and $L_g (t)$ are defined in~\eqref{eq:lipconst}.
\end{proposition}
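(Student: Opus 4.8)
\emph{Proof proposal.} The plan is to compare $u$ and $v$ by Kru\v{z}kov's doubling of variables, exploiting that both are MV--solutions — equivalently, since $u,v\in(\L\infty\cap\BV)$, BLN--solutions — to the \emph{same} IBVP~\eqref{eq:original}, differing only through their data. I would write the MV--inequality~\eqref{eq:MV} for $u=u(t,x)$ with Kru\v{z}kov constant $k=v(s,y)$, and symmetrically for $v=v(s,y)$ with $k=u(t,x)$, tested against a single nonnegative test function $\Phi(t,x,s,y)$ on the doubled domain $\big(]0,T[\times]a,b[\big)^2$. Summing the two ``$+$'' inequalities with the two ``$-$'' inequalities reconstructs the full Kru\v{z}kov entropy, since $(w-k)^++(w-k)^-=\modulo{w-k}$ and $\sgnp+\sgnm=\sgn$.

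For the test function I would take $\Phi(t,x,s,y)=\psi\!\big(\tfrac{t+s}{2}\big)\,\rho_\epsilon(t-s)\,\rho_\epsilon(x-y)$, with $\psi\in\Cc1(]-\infty,T[;\R^+)$ localizing in time and $\rho_\epsilon$ standard mollifiers concentrating on the diagonal $\{t=s,\ x=y\}$; note that $\Phi$ carries no dependence on the averaged spatial variable $\tfrac{x+y}{2}$. Adding the doubled inequalities and letting $\epsilon\to0$, the time--derivative terms merge into $\int_0^T\!\!\int_a^b \modulo{u-v}\,\psi'$, the initial terms produce $\norma{u_o-v_o}_{\L1(]a,b[)}$, and — because the flux $f$ is common to both solutions — the entropy--flux terms together with the space--dependent corrections $\partial_x f(t,x,k)$ appearing in~\eqref{eq:MV} collapse on the diagonal to $\sgn(u-v)\big[f(t,x,u)-f(t,x,v)\big]\partial_x\psi$, which vanishes since the effective spatial test function is constant. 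The source terms leave $\sgn(u-v)\big[g(t,x,u)-g(t,x,v)\big]\psi\le L_g(t)\,\modulo{u-v}\,\psi$ by~\ref{g} and~\eqref{eq:lipconst}, furnishing the Gronwall ingredient.

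The delicate point, and the main obstacle, is the treatment of the boundary contributions at $x=a$ and $x=b$, precisely the ``careful treatment'' announced in the introduction. In the MV--formulation these enter multiplied by $L_f(T)$ of~\eqref{eq:lipconst}, against $(u_a-k)^\pm,(u_b-k)^\pm$ in the $u$--inequality and $(v_a-k)^\pm,(v_b-k)^\pm$ in the $v$--inequality. Taking $k=v(s,y)$ and $k=u(t,x)$ respectively and letting the spatial mollifier concentrate at the boundary, the constant $k$ is replaced by the traces $v(s,a^+),u(t,a^+)$ (well defined by the $\BV$ regularity, cf.\ Remark~\ref{rem:blnbc}); after the diagonal limit one is left with data--versus--trace terms such as $L_f(t)\,(u_a-v(\cdot,a^+))^\pm$ and $L_f(t)\,(v_a-u(\cdot,a^+))^\pm$. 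The crux is then to invoke the BLN boundary conditions of Remark~\ref{rem:blnbc}, together with $\snr{\partial_u f}\le L_f$, to show that the trace--dependent parts carry a favourable sign and cancel, so that the net boundary contribution is bounded by $L_f(t)\big(\norma{u_a-v_a}_{\L1([0,t])}+\norma{u_b-v_b}_{\L1([0,t])}\big)$; obtaining this exact combination — rather than a weaker one--sided bound at each trace — is the heart of the argument.

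Collecting the interior and boundary estimates yields, for a.e.\ $t\in\,]0,T[$, the differential inequality
\begin{displaymath}
  \frac{\d{}}{\d{t}}\norma{u(t)-v(t)}_{\L1(]a,b[)}
  \leq
  L_g(t)\,\norma{u(t)-v(t)}_{\L1(]a,b[)}
  + L_f(t)\,\big(\snr{u_a(t)-v_a(t)}+\snr{u_b(t)-v_b(t)}\big),
\end{displaymath}
with initial value $\norma{u_o-v_o}_{\L1(]a,b[)}$. Since $t\mapsto L_f(t),L_g(t)$ are nondecreasing, I would bound $L_g(s)\le L_g(t)$ and $L_f(s)\le L_f(t)$ on $[0,t]$ and apply Gronwall's lemma; this produces the factor $e^{L_g(t)\,t}$ multiplying the data differences and gives precisely the claimed estimate.
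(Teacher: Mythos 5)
The paper itself does not reprove this proposition: it invokes \cite[Theorem~4.3]{ColomboRossi2015} verbatim, and the mechanism of that cited proof is the one reproduced in the paper's own proof of Theorem~\ref{thm:stab} — doubling of variables, then test functions $\phi_\epsilon(t,x)=\Psi(t)\left(1-\Phi_\epsilon(x)\right)$ \emph{vanishing at} $x=a,b$ as in~\eqref{eq:phi}, so that the limit $\epsilon\to0$ makes the entropy flux concentrate at the boundary and produces the trace terms~\eqref{eq:28e}--\eqref{eq:28f}, which are finally estimated by a case analysis based on the BLN conditions of Remark~\ref{rem:blnbc}. Your overall plan (doubling, BLN at the boundary, Gronwall) belongs to this family, but your boundary step, as set up, cannot be completed.

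The gap is your choice of test function. With $\Phi=\psi\!\left(\tfrac{t+s}2\right)\rho_\epsilon(t-s)\,\rho_\epsilon(x-y)$, constant in the averaged space variable, one has $(\partial_x+\partial_y)\Phi\equiv0$ and the effective spatial test function has zero derivative on $[a,b]$, so \emph{all} flux terms disappear from the limit inequality. The only boundary contributions left are then the $L_f$-weighted data-versus-trace terms coming from Definition~\ref{def:MV}, of the form $\tfrac{L_f}{2}\left(\,\modulo{u_a(t)-v(t,a^+)}+\modulo{v_a(t)-u(t,a^+)}\,\right)$ (and the analogues at $x=b$), entering the Gronwall inequality as nonnegative sources. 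These cannot ``carry a favourable sign and cancel'': they are absolute values, whereas the BLN conditions constrain \emph{signed flux differences at the traces}, objects your inequality no longer contains. Concretely, for Burgers' flux at $x=a$ with $u_a=v_a=0$, the traces $u(t,a^+)=-1$ and $v(t,a^+)=-\tfrac12$ are both BLN-admissible (outgoing characteristics), so your boundary source is strictly positive while $\modulo{u_a-v_a}=0$; moreover such trace quantities are not controlled by $\norma{u(t)-v(t)}_{\L1(]a,b[)}$, so they cannot be absorbed by Gronwall. Your final differential inequality is therefore true but strictly weaker than the proposition. The repair is exactly the paper's route: keep the test function vanishing near $x=a,b$, let the entropy flux generate the trace terms $\sgn\left(u(s,a^+)-v(s,a^+)\right)\left(f\left(s,a,u(s,a^+)\right)-f\left(s,a,v(s,a^+)\right)\right)$ in the limit (via \cite[Lemmas~A.4 and~A.6]{ColomboRossi2015}), and bound these by $L_f(t)\,\modulo{u_a(s)-v_a(s)}$ through the BLN case analysis, in the same way~\eqref{eq:28e}--\eqref{eq:28f} are handled in the proof of Theorem~\ref{thm:stab}; the remainder of your argument (reconstruction of the full Kru\v{z}kov entropy from the semi-entropies, the $L_g$ term, the Gronwall step with $L_f,L_g$ nondecreasing) is sound.
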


\section{Proofs of Theorem~\ref{thm:main} and Theorem~\ref{thm:stab}}
\label{sec:stabflux}

\begin{proofof}{Theorem~\ref{thm:main}}
  The existence of a unique solution to the IBVP~\eqref{eq:original}
  is ensured by the results presented in Section~\ref{sec:existence},
  see in particular \S~\ref{sec:conv} and \S~\ref{sec:uniq}.

  The estimates on the solution to the IBVP~\eqref{eq:original} are
  obtained by passing to the limit in the corresponding discrete
  estimates, namely~\eqref{eq:linf} for the $\L\infty$--bound,
  \eqref{eq:spaceBV} for the bound on the total variation,
  and~\eqref{eq:32}--\eqref{eq:34} for the Lipschitz continuity in
  time.
\end{proofof}

\begin{proofof}{Theorem~\ref{thm:stab}}
  By Lemma~\ref{lem:linf} we have, for all $t>0$
  \begin{align}
    \label{eq:20}
    \norma{u_1 (t)}_{\L\infty (]a,b[)} \leq \
    & \mathcal{U}_1 (t),
    &
      \norma{u_2 (t)}_{\L\infty (]a,b[)} \leq \
    & \mathcal{U}_2 (t),
  \end{align}
  the definition of $\mathcal{U}_i (t)$, $i=1,2$, following
  from~\eqref{eq:Ut}. Introduce, moreover, the following notation: for
  all $t>0$
  \begin{align}
    \label{eq:30}
    U_i (t) = \
    & [ -\mathcal{U}_i (t) , \, \mathcal{U}_i (t)], \quad
      i=1,2
    & U (t) = \
    & U_1 (t) \cup U_2 (t).
  \end{align}

  We apply the \emph{doubling of variables method} introduced by
  Kru\v{z}kov~\cite{Kruzkov}, following also~\cite[proof of
  Theorem~4.3]{ColomboRossi2015}. Let
  $\phi \in \Cc1 (]0,T[ \times \reali; \reali^+)$ be a test function
  as in Definition~\ref{def:solBLN} with
  \begin{equation}
    \label{eq:phi}
    \phi (t,x) = 0 \mbox{ for all } t \in \, ]0,T[ \mbox{ and }
    x \in [a,a+h_*] \cup [b-h_*, b]
  \end{equation}
  for a positive $h_*$. Clearly, $\phi (0,x)=0$ for all $x\in\reali$.

  Let $Y \in \Cc\infty (\reali;\reali^+)$ be such that
  \begin{align*}
    Y (-z) = \
    & Y (z),
    & Y(z) = \
    & 0 \mbox{ for } |z|\geq 1,
    &
      \int_\reali Y (z) \d{z} = \
    & 1,
  \end{align*}
  and define, for $h \in \,]0,h_*[$,
  $Y_h (z) = \frac{1}{h} Y \left(\frac{z}{h}\right)$. Clearly,
  $Y_h \in \Cc\infty (\reali;\reali^+)$, $Y_h (-z) = Y_h (z)$,
  $Y_h (z) = 0$ for $|z| \geq h$ and $\int_\reali Y_h (z) \d{z} =
  1$. Moreover, $Y_h \to \delta_0$ as $h \to 0$, $\delta_0$ being the
  Dirac delta in $0$. Define
  \begin{displaymath}
    \psi_h (t,x,s,y) =
    \phi\left(\frac{t+s}2, \frac{x+y}2\right) \, Y_h (t-s) \, Y_h (x-y).
  \end{displaymath}
  For the sake of simplicity, introduce the space
  $\Pi_T = \, ]0,T[\, \times\, ]a,b[$. Since $u_1= u_1 (t,x)$ and
  $u_2= u_2 (s,y)$ are solutions to the IBVPs~\eqref{eq:original}, we
  derive the following entropy inequalities
  \begin{align*}
    &\iq \left\{ \modulo{u_1 (t,x) - u_2 (s,y)} \partial_t \psi_h
      (t,x,s,y)\right.
    \\[-8pt]
    &\qquad\qquad + \sgn\left(u_1 (t,x) - u_2 (s,y)\right) \left[
      f_1\left(t, x, u_1 (t,x)\right) - f_1 \left(t, x, u_2
      (s,y)\right) \right] \, \partial_x \psi_h (t,x,s,y)
    \\
    & \qquad\qquad \left.  + \sgn\left(u_1 (t,x) -u_2 (s,y)\right)
      \left( g_1 \left(t,x,u_1 (t,x)\right) - \partial_x f_1 \left(t,
      x, u_2 (s,y)\right) \right)\, \psi_h (t,x,s,y) \right\}
    \\
    & \hspace{13cm} \times \d{x} \d{t} \d{y} \d{s}
    \\
    & +\id\!\! \int_{a}^{b} \modulo{u_o(x)-u_2 (s,y)} \psi_h (0,x,s,y)
      \d{x} \d{y} \d{s} \geq 0
  \end{align*}
  and
  \begin{align*}
    &\iq \left\{ \modulo{u_2 (s,y) - u_1 (t,x)} \partial_s \psi_h
      (t,x,s,y)\right.
    \\[-8pt]
    &\qquad\qquad + \sgn\left(u_2 (s,y) - u_1 (t,x)\right) \left[
      f_2\left(s, y, u_2 (s,y)\right) - f_2\left(s, y, u_1
      (t,x)\right) \right] \, \partial_y \psi_h (t,x,s,y)
    \\
    & \qquad\qquad \left.  + \sgn\left(u_2 (s,y) - u_1 (t,x)\right)
      \left( g_2 \left(s,y,u_2 (s,y)\right) - \partial_y f_2 \left(s,
      y, u_1 (t,x)\right)\right) \, \psi_h (t,x,s,y) \right\}
    \\
    & \hspace{13cm} \times \d{x} \d{t} \d{y} \d{s}
    \\
    & +\id\!\! \int_{a}^{b} \modulo{u_o(y)-u_1 (t,x)} \psi_h (t,x,0,y)
      \d{y} \d{x} \d{t} \geq 0.
  \end{align*}
  Now combine the two inequalities above and rearrange the terms
  therein: setting $\psi_h = \psi_h (t,x,s,y)$, we obtain
  \begin{align}
    \nonumber \iq
    & \left\{ \modulo{u_1 (t,x) - u_2 (s,y)}
      \left(\partial_t \psi_h + \partial_s \psi_h \right)\right.
    \\
    \nonumber
    & + \sgn\left(u_1 (t,x) - u_2 (s,y)\right) \left[
      f_1\left(t, x, u_1 (t,x)\right) - f_1\left(s, y, u_2
      (s,y)\right) \right] \, \left( \partial_x \psi_h + \partial_y
      \psi_h \right)
    \\ \nonumber
    &+ \sgn\left(u_1 (t,x) - u_2
      (s,y)\right)
    \\ \nonumber
    &\times \left[ \left(f_1\left(s,y, u_2
      (s,y)\right) - f_1\left(t, x, u_2 (s,y)\right) \right)
      \, \partial_x \psi_h - \partial_x f_1\left(t, x, u_2
      (s,y)\right) \psi_h\right.
    \\ \nonumber
    &\qquad \left.  +
      \left( f_2 \left(s,y, u_1 (t,x)\right) - f_2 \left(t, x, u_1
      (t,x)\right) \right) \, \partial_y \psi_h + \partial_y f_2
      \left(s, y, u_1 (t,x)\right) \psi_h \right]
    \\ \nonumber
    &+
      \sgn\left(u_1 (t,x) - u_2 (s,y)\right) \left(f_1 \left(s,y, u_2
      (s,y)\right) - f_1\left(t, x, u_1 (t,x)\right)\right.
    \\
    \nonumber
    &\qquad\left. + f_2\left(t,x,u_1 (t,x)\right) - f_2
      \left(s,y,u_2 (s,y)\right) \right) \, \partial_y \psi_h
    \\
    \nonumber
    &+ \sgn\left(u_1 (t,x) - u_2 (s,y)\right) \left( g_1
      \left(t, x, u_1 (t,x)\right) - g_1 (s,y,u_2 (s,y)) \right) \,
      \psi_h
    \\ \nonumber
    & \left.  + \sgn\left(u_1 (t,x) - u_2
      (s,y)\right) \left( g_1 (s,y,u_2 (s,y)) - g_2 \left(s,y,u_2
      (s,y)\right) \right) \, \psi_h \right\} \d{x} \d{t} \d{y}
      \d{s}
    \\ \nonumber + \id\!\! \int_{a}^{b}
    & \modulo{u_o(x)-u_2
      (s,y)} \psi_h (0,x,s,y) \d{x} \d{y} \d{s}
    \\ \nonumber + \id\!\!
    \int_{a}^{b}
    & \modulo{u_o(y)-u_1 (t,x)} \psi_h (t,x,0,y) \d{y}
      \d{x} \d{t}
    \\
    \label{eq:21} = \iq
    & \sum_{j=1}^6 I_j \d{x} \d{t}
      \d{y} \d{s} + \id\!\! \int_{a}^{b} J_1 \d{x} \d{y} \d{s} + \id\!\!
      \int_{a}^{b} J_2 \d{y} \d{x} \d{t} \geq 0,
  \end{align}
  where
  \begin{align}
    \label{eq:I1}
    I_1 = \
    & \modulo{u_1 (t,x) - u_2 (s,y)} \left(\partial_t \psi_h
      + \partial_s \psi_h\right),
    \\
    \label{eq:I2}
    I_2 = \
    &
      \sgn\left(u_1 (t,x) - u_2 (s,y)\right) \left[ f_1\left(t, x, u_1
      (t,x)\right) - f_1\left(s, y, u_2 (s,y)\right) \right] \,
      \left( \partial_x \psi_h + \partial_y \psi_h \right),
    \\ \nonumber
    I_3= \
    & \sgn\left(u_1 (t,x) - u_2 (s,y)\right)
    \\ \label{eq:I3}
    &\times \left[ \left(f_1\left(s,y, u_2 (s,y)\right) - f_1\left(t,
      x, u_2 (s,y)\right) \right) \, \partial_x \psi_h
      - \partial_x f_1\left(t, x, u_2 (s,y)\right) \psi_h\right.
    \\
    \nonumber
    &\qquad \left.  + \left( f_2 \left(s,y, u_1 (t,x)\right)
      - f_2 \left(t, x, u_1 (t,x)\right) \right) \, \partial_y
      \psi_h + \partial_y f_2 \left(s, y, u_1 (t,x)\right) \psi_h
      \right],
    \\
    \label{eq:I4} I_4=\
    &\sgn\left(u_1 (t,x) - u_2
      (s,y)\right) \left(f_1 \left(s,y, u_2 (s,y)\right) - f_1\left(t,
      x, u_1 (t,x)\right)\right.
    \\ \nonumber
    &\qquad\left. +
      f_2\left(t,x,u_1 (t,x)\right) - f_2 \left(s,y,u_2 (s,y)\right)
      \right) \, \partial_y \psi_h,
    \\ \label{eq:I5} I_5=\
    &
      \sgn\left(u_1 (t,x) - u_2 (s,y)\right) \left( g_1 \left(t, x, u_1
      (t,x)\right) - g_1 (s,y,u_2 (s,y)) \right) \, \psi_h,
    \\ \label{eq:I6} I_6 = \
    & \sgn\left(u_1 (t,x) - u_2 (s,y)\right)
      \left( g_1 (s,y,u_2 (s,y)) - g_2 \left(s,y,u_2 (s,y)\right)
      \right) \, \psi_h,
    \\ \label{eq:J1} J_1 =\
    & \modulo{u_o(x)-u_2 (s,y)} \psi_h (0,x,s,y),
    \\ \label{eq:J2} J_2 = \
    &
      \modulo{u_o(y)-u_1 (t,x)} \psi_h (t,x,0,y).
  \end{align}
  Now we let $h$ go to $0$ in~\eqref{eq:21}. The integrals of
  $J_1$~\eqref{eq:J1} and $J_2$~\eqref{eq:J2}, are treated exactly as
  in~\cite[proof of Theorem~4.3]{ColomboRossi2015}:
  \begin{equation}
    \label{eq:22}
    \lim_{h \to 0}\left( \id\!\! \int_{a}^{b} J_1 \d{x} \d{y} \d{s} +
      \id\!\! \int_{a}^{b} J_2 \d{y} \d{x} \d{t}\right) =0.
  \end{equation}
  The integral of $I_1+I_2+I_5$, see~\eqref{eq:I1}, \eqref{eq:I2}
  and~\eqref{eq:I5}, is treated exactly as
  in~\cite[Theorem~1]{Kruzkov}, leading to
  \begin{align}
    \nonumber
    & \lim_{h \to 0}\iq \left\{I_1+ I_2 +I_5\right\} \d{x}
      \d{t} \d{y} \d{s}
    \\ \label{eq:23}
    =\
    & \id \left\{ \modulo{u_1
      (t,x) - u_2 (t,x)} \partial_t \phi (t,x) \right.
    \\ \nonumber
    & + \sgn\left(u_1 (t,x) - u_2 (t,x)\right) \left(f_1\left(t,x,u_1
      (t,x)\right) - f_1 \left(t,x,u_2
      (t,x)\right)\right) \partial_x \phi (t,x)
    \\ \nonumber
    &
      \left.  + \sgn\left(u_1 (t,x) - u_2 (t,x)\right) \left(
      g_1\left(t,x,u_1 (t,x)\right) - g_1 (t,x,u_2 (t,x))
      \right)\phi (t,x) \right\} \d{x} \d{t}.
  \end{align}
  The integral of $I_3$~\eqref{eq:I3} can be as well treated as
  in~\cite[Theorem~1]{Kruzkov}, bearing in mind that now we have two
  different fluxes, namely $f_1$ and $f_2$. Nevertheless, this does
  not constitute an issue in the proof. Thus we have
  \begin{equation}
    \label{eq:24}
    \lim_{h \to 0} \iq I_3 \d{x}\d{t} \d{y} \d{s} = 0.
  \end{equation}
  Consider now the integral of $I_4$~\eqref{eq:I4}:
  \begin{align}
    \nonumber
    & \iq I_4 \d{x}\d{t} \d{y} \d{s}
    \\ \nonumber
    = \
    & \iq
      \sgn\left(u_1 (t,x) - u_2 (s,y)\right)
    \\ \nonumber
    & \qquad
      \times \left[(f_1 -f_2) \left(s,y, u_2 (s,y)\right) - (f_1 - f_2)
      \left(t, x, u_1 (t,x)\right)\right]
      \partial_y \psi_h (t,x,s,y) \d{x} \d{t} \d{y} \d{s}
    \\ \label{eq:25}
    = \
    & - \iq \d{}_y \left\{ \sgn\left(u_1 (t,x) -
      u_2 (s,y)\right) \right.
    \\ \nonumber
    & \qquad \left.
      \times\left[(f_1 -f_2) \left(s,y, u_2 (s,y)\right) - (f_1 - f_2)
      \left(t, x, u_1 (t,x)\right)\right] \right\} \psi_h(t,x,s,y)
      \d{x} \d{t} \d{y} \d{s}.
  \end{align}
  Recall that for a Lipschitz function $h$ and an $\L\infty \cap \BV$
  function $z$ it holds $\d{}_y h (y,z (y)) = \partial_y h (y,z (y)) +
  \partial_z h (y,z (y)) \, z' (y) $, where we apply the chain
  rule. There, $z' (y)$ is a finite measure,
  see~\cite[Lemma~A2.1]{BouchutPerthame} and
  also~\cite[Lemma~4.1]{KarlsenRisebro}. Thus, the integral
  in~\eqref{eq:25} splits in two parts. Let us analyse them in detail.
  The first part involves the partial derivative with respect to $y$:
  \begin{align}
    \nonumber
    & \lim_{h\to 0} \iq \sgn\left(u_1 (t,x) - u_2 (s,y)\right)
      \partial_y (f_2-f_1) \!  \left(s,y,u_2 (s,y)\right)
      \, \psi_h (t,x,s,y) \d{x} \d{t} \d{y} \d{s}
    \\ \nonumber \leq \
    & \lim_{h \to 0} \iq \modulo{\partial_y (f_2-f_1)\!
      \left(s,y,u_2 (s,y)\right)}\, \psi_h (t,x,s,y) \d{x} \d{t} \d{y} \d{s}
    \\
    \label{eq:26}
    = \
    & \id \modulo{\partial_y (f_2-f_1) \! \left(s,y,u_2 (s,y)\right)} \,
      \phi (s,y) \d{y}\d{s}.
  \end{align}
  Concerning the second part, the function
  \begin{displaymath}
    H (t,x,s,y,u_1, u_2)
    = \sgn (u_1-u_2) \,
    \left[\left(f_2-f_1\right) (s,y,u_2) - \left(f_2-f_1\right) (t,x,u_1)\right]
  \end{displaymath}
  is clearly Lipschitz with respect to $u_2$, with Lipschitz constant
  $\norma{\partial_u (f_2-f_1) (s,y, \cdot)}_{\L\infty (U_2 (s))}$,
  with the notation introduced in~\eqref{eq:30}. We can thus
  apply~\cite[Lemma~A2.1]{BouchutPerthame}, see
  also~\cite[Lemma~4.1]{KarlsenRisebro}, to get
  \begin{displaymath}
    \modulo{\partial_{u_2} H \left(t,x,s,y,u_1 (t,x), u_2 (s,y)\right) \,
      \partial_y u_2 (s,y)} \leq
    \norma{\partial_u (f_2-f_1) (s,y, \cdot)}_{\L\infty (U_2 (s))} \,
    \modulo{\partial_y u_2 (s,y)}.
  \end{displaymath}
  Therefore we obtain
  \begin{align}
    \nonumber \lim_{h \to 0}
    & \iq \modulo{\partial_{u_2}H \left(t,x,s,y,u_1 (t,x), u_2 (s,y)\right)
      \, \partial_y u_2 (s,y)} \, \psi_h (t,x,s,y) \d{x} \d{t} \d{y} \d{s}
    \\
    \label{eq:27}
    \leq \
    & \id \norma{\partial_u (f_2-f_1) (t, x)}_{\L\infty (U_2 (t))} \,
      \modulo{\partial_x u_2 (t,x)} \, \phi (t,x) \d{t}\d{x}.
  \end{align}
  As far as the integral of $I_6$~\eqref{eq:I6} is concerned, we
  obtain
  \begin{equation}
    \label{eq:31}
    \iq I_6 \d{x} \d{t} \d{y} \d{s}
    \leq
    \id \modulo{(g_1 - g_2)\!\left(t,x,u_2 (t,x)\right)} \phi (t,x) \d{x} \d{t}.
  \end{equation}

  Therefore, in the limit $h \to 0$, collecting
  together~\eqref{eq:22}, \eqref{eq:23},~\eqref{eq:24}, \eqref{eq:26},
  \eqref{eq:27} and~\eqref{eq:31}, we obtain that~\eqref{eq:21}
  becomes
  \begin{align}
    \nonumber \id
    & \left\{ \modulo{u_1 (t,x) - u_2 (t,x)} \partial_t \phi (t,x) \right.
    \\[-9pt] \nonumber
    & + \sgn\left(u_1 (t,x) -
      u_2 (t,x)\right) \left(f_1\left(t,x,u_1 (t,x)\right) - f_1
      \left(t,x,u_2 (t,x)\right)\right) \partial_x \phi (t,x)
    \\
    \nonumber
    & + \sgn\left(u_1 (t,x) - u_2 (t,x)\right) \left(
      g_1\left(t,x,u_1 (t,x)\right) - g_1 (t,x,u_2 (t,x)) \right)\phi(t,x)
    \\
    \label{eq:prima}
    & + \modulo{\partial_x (f_2-f_1)\!  \left(t,x, u_2 (t,x)\right)}
      \, \phi (t,x)
    \\ \nonumber
    & + \norma{\partial_u (f_2-f_1) (t,
      x)}_{\L\infty (U_2 (t))} \, \modulo{\partial_x u_2 (t,x)} \,\phi (t,x)
    \\
    \nonumber
    & \left.  + \modulo{(g_1 - g_2)\!\left(t,x,u_2
      (t,x)\right)} \, \phi (t,x) \right\} \d{x} \d{t} \geq 0.
  \end{align}

  Following the proof of~\cite[Theorem~4.3]{ColomboRossi2015},
  introduce a function $\Phi_\epsilon \in \C2 (\reali; [0,1])$ such
  that $\Phi_\epsilon (a) = \Phi_\epsilon (b) = 1$,
  $\Phi_\epsilon (x) = 0$ for $x \in [a+\epsilon, b-\epsilon]$ and
  $\norma{\Phi'_\epsilon}_{\L\infty} \leq 1/\epsilon$. Let
  $\Psi \in \Cc2 (]0,T[; \reali^+)$, with $\Psi (0)=0$. For any
  $\epsilon>0$ sufficiently small, the function
  \begin{displaymath}
    \phi _\epsilon (t,x) = \Psi (t) \, \left(1 - \Phi_\epsilon (x)\right)
  \end{displaymath}
  satisfies~\eqref{eq:phi}, being thus an admissible test
  function. Use it in~\eqref{eq:prima} and pass to the limit as
  $\epsilon$ goes to $0$: we get
  \begin{align}
    \nonumber \id
    & \left\{ \modulo{u_1 (t,x) - u_2 (t,x)} \Psi'(t)
      \right.
    \\[-8pt] \nonumber
    & + \sgn\left(u_1 (t,x) - u_2
      (t,x)\right) \left( g_1\left(t,x,u_1 (t,x)\right) - g_1 (t,x,u_2
      (t,x)) \right) \Psi (t)
    \\
    \label{eq:dopo}
    & + \modulo{\partial_x (f_2-f_1) \left(t,x, u_2 (t,x)\right)} \,
      \Psi (t)
    \\ \nonumber
    & \left.  + \norma{\partial_u (f_2-f_1) (t,
      x)}_{\L\infty (U_2 (t))} \, \modulo{\partial_x u_2 (t,x)} \,
      \Psi (t) + \modulo{(g_1 - g_2)\!\left(t,x,u_2 (t,x)\right)} \,
      \Psi (t) \right\} \d{x} \d{t}
    \\ \nonumber
    & +\int_0^T
      \sgn\left(u_1 (t,a^+) - u_2 (t,a^+)\right) \left(f_1\left(t,a,u_1
      (t,a^+)\right) - f_1 \left(t,a,u_2 (t,a^+)\right)\right) \Psi
      (t) \d{t}
    \\
    \nonumber
    & - \int_0^T \sgn\left(u_1 (t,b^-) - u_2 (t,b^-)\right)
      \left(f_1\left(t,b,u_1 (t,b^-)\right) - f_1 \left(t,b,u_2
      (t,b^-)\right)\right) \Psi (t) \d{t} \geq 0,
  \end{align}
  where we used~\cite[Lemma~A.6 and Lemma~A.4]{ColomboRossi2015},
  recalling that the exterior normal in $a$ has negative sign.

  Introduce $\tau,\, t$ such that $0 < \tau < t < T$. Define, for
  $\ell>0$, the map
  \begin{displaymath}
    \Psi_\ell (s)  = \alpha_\ell (s-\tau-\ell) - \alpha_h (s-t-\ell),
    \mbox{ with } \alpha_\ell (\xi) = \int_{-\infty}^z Y_\ell (\zeta) \d\zeta.
  \end{displaymath}
  This function is clearly in $\Cc2 (]0,T[;\reali^+)$ and such that
  $\Psi_\ell (0)=0$. Moreover, $\Psi_\ell \to \caratt{[\tau,t]}$ and
  $\Psi'_\ell \to \delta_\tau - \delta_t $ as $\ell$ tends to
  $0$. Substitute $\Psi_\ell$ in~\eqref{eq:dopo} and pass to the limit
  $\ell \to 0$:
  \begin{align}
    \label{eq:28}
    0 \leq \
    &\int_a^b \modulo{u_1 (\tau,x) - u_2 (\tau,x)} \d{x} -
      \int_a^b \modulo{u_1 (t,x) - u_2 (t,x)} \d{x}
    \\\label{eq:28a}
    & +
      \int_\tau^t\int_a^b \norma{\partial_u g_1 (s,x, \cdot)}_{\L\infty
      (U (s))} \modulo{u_1 (s,x) -u_2 (s,x)} \d{x}\d{s}
    \\
    \label{eq:28b}
    & +\int_\tau^t\int_a^b \modulo{\partial_x (f_2-f_1)\!  \left(s,x,
      u_2 (s,x)\right)} \d{x}\d{s}
    \\ \label{eq:28c}
    & + \int_\tau
      ^t \norma{\partial_u (f_2-f_1) (s, \cdot, \cdot)}_{\L\infty
      (]a,b[\times U_2 (s))} \tv\left(u_2(s)\right) \d{s}
    \\ \label{eq:28d}
    & + \int_\tau^t \int_a^b \modulo{(g_1 -
      g_2)\!\left(s,x,u_2 (s,x)\right)} \d{x}\d{s}
    \\ \label{eq:28e}
    &
      +\int_\tau^t \sgn\left(u_1 (s,a^+) - u_2 (s,a^+)\right)
      \left(f_1\left(s,a,u_1 (s,a^+)\right) - f_1 \left(s,a,u_2
      (s,a^+)\right)\right) \d{s}
    \\
    \label{eq:28f}
    & - \int_\tau^t \sgn\left(u_1 (s,b^-) - u_2 (s,b^-)\right)
      \left(f_1\left(s,b,u_1 (s,b^-)\right) - f_1 \left(s,b,u_2
      (s,b^-)\right)\right) \d{s}.
  \end{align}

  Now we aim to find an estimate for~\eqref{eq:28e}
  and~\eqref{eq:28f}. Focus in particular on~\eqref{eq:28f}, the
  procedure being analogous for~\eqref{eq:28e}. The only contribution
  from this term come from the negative part of its argument, that is
  \begin{equation}
    \label{eq:sigma}
    B = \left[
      \sgn\left(u_1 (s,b^-) - u_2 (s,b^-)\right)
      \left(
        f_1\left(s,b,u_1 (s,b^-)\right) - f_1 \left(s,b,u_2 (s,b^-)\right)\right)
    \right]^-.
  \end{equation}
  Fix $s \in [\tau,t]$. To ease readability, in the following we will
  denote $u_1 = u_1 (s,b^-)$, $u_2 = u_2 (s,b^-)$, $u_b = u_b (s)$ and
  $f_i(z) = f_i(s,b,z)$, $i=1,2$. We have the following
  \begin{displaymath}
    B =
    \begin{cases}
      \left(f_1 (u_1 ) - f_1 (u_2 )\right)^- & \mbox{ if } u_1 > u_2 ,
      \\[3pt]
      \left(f_1 (u_1 ) - f_1 (u_2 )\right)^+ & \mbox{ if } u_1 < u_2 .
    \end{cases}
  \end{displaymath}
  Before analysing each case, recall the BLN--boundary conditions (see
  Remark~\ref{rem:blnbc}):
  \begin{align}
    \label{eq:blnf1}
    \forall k \in \mathcal{I} \left(u_b, u_1 \right): \qquad
    & \sgn (u_1 - u_b) \left(f_1 (u_1)- f_1 (k)\right) \geq 0,
    \\
    \label{eq:blnf2}
    \forall k \in \mathcal{I} \left(u_b, u_2 \right): \qquad
    & \sgn (u_2 - u_b) \left(f_2 (u_2)- f_2(k)\right) \geq 0.
  \end{align}

  \paragraph{Case $\boldsymbol{u_1>u_2}$.} In this case we have
  \begin{align*}
    B = \
    & \left(f_1 (u_1 ) - f_1 (u_2 )\right)^- = \left( f_1 (u_1)
      - f_2 (u_2)+ f_2(u_2) -f_1 (u_2) \right)^-
    \\
    \leq \
    & \left(f_2 (u_2)- f_1 (u_1)\right)^+ +
      \norma{f_2-f_1}_{\L\infty (U_2 (s))}.
  \end{align*}
  % The only relevant case is $f_1 (u_1) < f_2 (u_2)$.
  We have three sub-cases: \subparagraph{1)
    $\boldsymbol{u_b < u_2 < u_1}$.}  The
  BLN--condition~\eqref{eq:blnf1} now reads $f_1 (u_1) \geq f_1 (k)$
  for all $k \in [u_b,u_1]$ . The choice $k=u_2$ is admissible:
  $f_1 (u_1) \geq f_1 (u_2)$. Thus
  \begin{displaymath}
    f_2 (u_2) - f_1 (u_1) \leq f_2 (u_2) - f_1 (u_2)
    \leq
    \norma{f_2-f_1}_{\L\infty (U_2 (s))}.
  \end{displaymath}

  \subparagraph{2) $\boldsymbol{u_2 < u_b < u_1}$.} The
  BLN--conditions~\eqref{eq:blnf1}--\eqref{eq:blnf2} now reads
  \begin{align*}
    \forall k \in [u_b, u_1]: \qquad & f_1 (u_1) \geq f_1 (k),
    \\
    \label{eq:blnf2}
    \forall k \in [u_2, u_b]: \qquad & f_2 (u_2) \leq f_2(k).
  \end{align*}
  In both cases, the choice $k=u_b$ is admissible, yielding
  $f_1 (u_1) \geq f_1 (u_b)$ and $f_2 (u_2) \leq f_2 (u_b)$. Thus
  \begin{displaymath}
    f_2 (u_2) - f_1 (u_1) \leq f_2 (u_b) - f_1 (u_b).
  \end{displaymath}

  \subparagraph{3) $\boldsymbol{u_2 < u_1 < u_b}$.}The
  BLN--condition~\eqref{eq:blnf2} now reads $f_2 (u_2) \leq f_2 (k)$
  for all $k \in [u_2, u_b]$ . The choice $k=u_1$ is admissible:
  $f_2 (u_2) \leq f_2 (u_1)$. Thus
  \begin{displaymath}
    f_2 (u_2) - f_1 (u_1) \leq f_2 (u_1) - f_1 (u_1)
    \leq
    \norma{f_2-f_1}_{\L\infty (U_1 (s))}.
  \end{displaymath}

  \paragraph{Case $\boldsymbol{u_1<u_2}$.} The proof is done in a
  similar way.

  \medskip

  We can thus conclude that, for $s \in [\tau,t]$,
  $ B \leq 2 \, \norma{(f_2-f_1)(s,b, \cdot)}_{\L\infty(U (s))}$,
  where $U (s)$ is as in~\eqref{eq:30}, so that, going back
  to~\eqref{eq:28f},
  \begin{displaymath}
    -\int_\tau^t B \d{s} \leq \int_\tau^t 2 \,
    \norma{(f_2-f_1)(s,b, \cdot)}_{\L\infty(U (s))} \d{s}.
  \end{displaymath}
  Similarly, we obtain
  \begin{displaymath}
    [\eqref{eq:28e}] \leq \int_\tau^t 2 \,
    \norma{(f_2-f_1)(s,a, \cdot)}_{\L\infty(U (s))} \d{s}.
  \end{displaymath}
  Therefore, we get the following estimate
  for~\eqref{eq:28}--\eqref{eq:28f}:
  \begin{align*}
    & \int_a^b\modulo{u_1 (t,x) - u_2 (t,x)} \d{x}
    \\ \leq \
    & \int_a^b\modulo{u_1 (\tau,x) - u_2 (\tau,x)} \d{x} + \int_\tau^t
      \int_a^b \norma{\partial_u g_1 (s,x, \cdot)}_{\L\infty (U (s))}
      \modulo{u_1 (s,x)- u_2 (s,x)} \d{x}\d{s}
    \\
    & + \int_\tau^t \int_a^b \norma{\partial_x (f_2-f_1) (s,x,
      \cdot)}_{\L\infty (U_2 (s))} \d{x}\d{s} + \int_\tau^t \int_a^b
      \norma{(g_1-g_2) (s,x, \cdot)}_{\L\infty (U_2 (s))} \d{x}\d{s}
    \\
    & + \int_\tau^t \norma{\partial_u (f_2-f_1) (s, \cdot,
      \cdot)}_{\L\infty (]a,b[\times U_2 (s))} \, \tv \left(u_2
      (s)\right) \d{s}
    \\
    & + 2 \int_\tau^t \norma{(f_2-f_1)(s,a, \cdot)}_{\L\infty(U (s))}
      \d{s} +2 \int_\tau^t \norma{(f_2-f_1)(s,b, \cdot)}_{\L\infty(U
      (s))} \d{s}.
  \end{align*}
  Let now $\tau$ go to $0$, recalling that the initial datum is the
  same, and apply Gronwall lemma:
  \begin{align*}
    & \int_a^b\modulo{u_1 (t,x) - u_2 (t,x)} \d{x}
    \\
    \leq \
    & e^{t \,
      \norma{\partial_u g_1}_{\L\infty ([0,t]\times[a,b] \times U
      (t))}} \left( \int_0^t \int_a^b \norma{\partial_x (f_2-f_1)
      (s,x, \cdot)}_{\L\infty (U_2 (s))} \d{x}\d{s} \right.
    \\
    & + \int_0^t \int_a^b \norma{(g_1-g_2) (s,x, \cdot)}_{\L\infty
      (U_2 (s))} \d{x}\d{s}
    \\
    & + \int_0^t \norma{\partial_u (f_2-f_1) (s, \cdot,
      \cdot)}_{\L\infty (]a,b[\times U_2 (s))} \, \tv \left(u_2
      (s)\right) \d{s}
    \\
    & \left. + 2 \int_0^t \norma{(f_2-f_1)(s,a, \cdot)}_{\L\infty(U
      (s))} \d{s} +2 \int_0^t \norma{(f_2-f_1)(s,b,
      \cdot)}_{\L\infty(U (s))} \d{s}\right).
  \end{align*}

  Exchanging the role $u_1$ and $u_2$, and thus that of $f_1, \, g_1$
  and $f_2, \, g_2$, we get a symmetric estimate. Therefore, recalling
  the definition of $U (t)$~\eqref{eq:30}, the final estimate reads
  \begin{align*}
    & \int_a^b\modulo{u_1 (t,x) - u_2 (t,x)} \d{x}
    \\
    \leq \
    &
      \exp\left(t \, \min\left\{\norma{\partial_u g_1}_{\L\infty
      ([0,t]\times[a,b] \times U (t))}, \, \norma{\partial_u
      g_2}_{\L\infty ([0,t]\times[a,b] \times U (t))}\right\}
      \right)
    \\
    & \times \left( \int_0^t \int_a^b \norma{\partial_x (f_2-f_1)
      (s,x, \cdot)}_{\L\infty (U(s))} \d{x}\d{s} \right.
    \\
    & \qquad+ \int_0^t \int_a^b \norma{(g_1-g_2) (s,x,
      \cdot)}_{\L\infty (U (s))} \d{x}\d{s}
    \\
    & \qquad+ \int_0^t \norma{\partial_u (f_2-f_1) (s, \cdot,
      \cdot)}_{\L\infty (]a,b[\times U (s))} \, \min\left\{\tv
      \left(u_1 (s)\right), \,\tv \left(u_2 (s)\right)\right\} \d{s}
    \\
    & \left. \qquad+ 2 \int_0^t \norma{(f_2-f_1)(s,a,
      \cdot)}_{\L\infty(U (s))} \d{s} +2 \int_0^t
      \norma{(f_2-f_1)(s,b, \cdot)}_{\L\infty(U (s))} \d{s}\right).
  \end{align*}
\end{proofof}

\section*{Acknowledgements}
The author would like to thank Rinaldo M. Colombo and Paola Goatin for
fruitful discussions, and Boris Andreianov for his suggestion about
the stability estimate.

\small{ \bibliography{1d}

  \bibliographystyle{abbrv} }

\end{document}